\begin{document}

\theoremstyle{plain}
\newtheorem{Thm}{Theorem}[section]
\newtheorem{TitleThm}[Thm]{}
\newtheorem{Corollary}[Thm]{Corollary}
\newtheorem{Proposition}[Thm]{Proposition}
\newtheorem{Lemma}[Thm]{Lemma}
\newtheorem{Conjecture}[Thm]{Conjecture}
\theoremstyle{definition}
\newtheorem{Definition}[Thm]{Definition}
\theoremstyle{definition}
\newtheorem{Example}[Thm]{Example}
\newtheorem{TitleExample}[Thm]{}
\newtheorem{Remark}[Thm]{Remark}
\newtheorem{SimpRemark}{Remark}
\renewcommand{\theSimpRemark}{}

\numberwithin{equation}{section}

\newcommand{\C}{{\mathbb C}}
\newcommand{\Q}{{\mathbb Q}}
\newcommand{\R}{{\mathbb R}}
\newcommand{\Z}{{\mathbb Z}}
\newcommand{\mbS}{{\mathbb S}}
\newcommand{\mbU}{{\mathbb U}}
\newcommand{\mbO}{{\mathbb O}}
\newcommand{\mbG}{{\mathbb G}}
\newcommand{\mbH}{{\mathbb H}}

\newcommand{\flushpar}{\par \noindent}

\newcommand{\proj}{{\rm proj}}
\newcommand{\coker}{{\rm coker}\,}
\newcommand{\supp}{{\rm supp}\,}
\newcommand{\codim}{{\operatorname{codim}}}
\newcommand{\sing}{{\operatorname{sing}}}
\newcommand{\Tor}{{\operatorname{Tor}}}
\newcommand{\Hom}{{\operatorname{Hom}}}
\newcommand{\wt}{{\operatorname{wt}}}
\newcommand{\dlog}{{\operatorname{Derlog}}}
\newcommand{\Olog}[2]{\Omega^{#1}(\text{log}#2)}
\newcommand{\produnion}{\cup \negmedspace \negmedspace 
\negmedspace\negmedspace {\scriptstyle \times}}
\newcommand{\pd}[2]{\dfrac{\partial#1}{\partial#2}}

\def \ba {\mathbf {a}}
\def \bb {\mathbf {b}}
\def \bc {\mathbf {c}}
\def \bd {\mathbf {d}}
\def \bone {\boldsymbol {1}}
\def \bg {\mathbf {g}}
\def \bG {\mathbf {G}}
\def \bh {\mathbf {h}}
\def \bk {\mathbf {k}}
\def \bm {\mathbf {m}}
\def \bn {\mathbf {n}}
\def \bt {\mathbf {t}}
\def \bu {\mathbf {u}}
\def \bv {\mathbf {v}}
\def \bV {\mathbf {V}}
\def \bx {\mathbf {x}}
\def \bw {\mathbf {w}}
\def \b1 {\mathbf {1}}
\def \bga {\boldsymbol \alpha}
\def \bgb {\boldsymbol \beta}
\def \bgg {\boldsymbol \gamma}

\def \itc {\text{\it c}}
\def \ite {\text{\it e}}
\def \ith {\text{\it h}}
\def \iti {\text{\it i}}
\def \itj {\text{\it j}}
\def \itm {\text{\it m}}
\def \itM {\text{\it M}} 
\def \itn {\text{\it n}}
\def \ithn {\text{\it hn}}
\def \itt {\text{\it t}}

\def \cA {\mathcal{A}}
\def \cB {\mathcal{B}}
\def \cC {\mathcal{C}}
\def \cD {\mathcal{D}}
\def \cE {\mathcal{E}}
\def \cF {\mathcal{F}}
\def \cG {\mathcal{G}}
\def \cH {\mathcal{H}}
\def \cK {\mathcal{K}}
\def \cL {\mathcal{L}}
\def \cM {\mathcal{M}}
\def \cN {\mathcal{N}}
\def \cO {\mathcal{O}}
\def \cP {\mathcal{P}}
\def \cS {\mathcal{S}}
\def \cT {\mathcal{T}}
\def \cU {\mathcal{U}}
\def \cV {\mathcal{V}}
\def \cW {\mathcal{W}}
\def \cX {\mathcal{X}}
\def \cY {\mathcal{Y}}
\def \cZ {\mathcal{Z}}

\def \ga {\alpha}
\def \gb {\beta}
\def \gg {\gamma}
\def \gd {\delta}
\def \ge {\epsilon}
\def \gevar {\varepsilon}
\def \gk {\kappa}
\def \gl {\lambda}
\def \gs {\sigma}
\def \gt {\tau}
\def \gw {\omega}
\def \gz {\zeta}
\def \gG {\Gamma}
\def \gD {\Delta}
\def \gL {\Lambda}
\def \gS {\Sigma}
\def \gW {\Omega}

\def \dim {{\rm dim}\,}
\def \mod {{\rm mod}\;}
\def \rank {{\rm rank}\,}

\newcommand{\ds}{\displaystyle}
\newcommand{\vf}{\vspace{\fill}}
\newcommand{\vect}[1]{{\bf{#1}}}
\def\R{\mathbb R}
\def\C{\mathbb C}
\def\N{\mathbb N}
\def\Sym{\mathrm{Sym}}
\def\Sk{\mathrm{Sk}}
\def\GL{\mathrm{GL}}
\def\Diff{\mathrm{Diff}}
\def\id{\mathrm{id}}
\def\Pf{\mathrm{Pf}}
\def\gl{\mathfrak{gl}}
\def\sll{\mathfrak{sl}}
\def\g{\mathfrak{g}}
\def\h{\mathfrak{h}}
\def\k{\mathfrak{k}}
\def\t{\mathfrak{t}}
\def\OcN{\mathscr{O}_{\C^N}}
\def\Ocn{\mathscr{O}_{\C^n}}
\def\Ocm{\mathscr{O}_{\C^m}}
\def\Ocnz{\mathscr{O}_{\C^n,0}}
\def\Derlog{\mathrm{Derlog}\,}
\def\expdeg{\mathrm{exp\,deg}\,}

\title[Exceptional Orbit Hypersurfaces]
{Topology of Exceptional Orbit Hypersurfaces of Prehomogeneous Spaces}
\author[James Damon]{James Damon$^1$}

\thanks{(1) Partially supported by the Simons Foundation grant 230298 
and the National Science Foundation grant DMS-1105470}
\address{Department of Mathematics, University of North Carolina, Chapel 
Hill, NC 27599-3250, USA
}

\keywords{linear algebraic groups, solvable, reductive, prehomogeneous 
spaces, determinantal hypersurfaces, classical symmetric spaces, 
exceptional orbit varieties, linear free divisors, global Milnor fibration, 
cohomological triviality of Milnor fibration, topology of Milnor fibers, 
complements, links, quivers, Hopf algebras, exterior algebras}

\subjclass{Primary: 11S90, 32S25, 55R80
Secondary:  57T15, 14M12, 20G05}

\begin{abstract}
We consider the topology for a class of hypersurfaces with highly 
nonisolated singularites which arise as \lq\lq exceptional orbit 
varieties\rq\rq of a special class of prehomogeneous vector spaces, which 
are representations of linear algebraic groups with open orbits.  These 
hypersurface singularities include both determinantal hypersurfaces and 
linear free (and free*) divisors.  Although these hypersurfaces have highly 
nonisolated singularities, we determine the topology of their Milnor 
fibers, complements and links.  We do so by using the action of linear 
algebraic groups beginning with the complement, instead of using 
Morse-type arguments on the Milnor fibers.  This includes replacing the 
local Milnor fiber by a global Milnor fiber which has a \lq\lq complex 
geometry\rq\rq resulting from a transitive action of an appropriate 
algebraic group, yielding a compact \rq\rq model submanifold\rq\rq  for 
the homotopy type of the Milnor fiber.  The topology includes the 
(co)homology (in characteristic $0$, and $2$-torsion in one family) and 
homotopy groups, and we deduce the triviality of the monodromy 
transformations on rational (or complex) cohomology.  \par 
Unlike isolated singularities, the cohomology of the Milnor fibers and 
complements are isomorphic as algebras to exterior algebras or for one 
family, modules over exterior algebras; and cohomology of the link is, as a 
vector space, a truncated and shifted exterior algebra, for which the 
cohomology product structure is essentially trivial.  We also deduce from 
Bott\rq s periodicity theorem, the homotopy groups of the Milnor fibers 
for determinantal hypersurfaces in the \lq\lq stable range\rq\rq as the 
stable homotopy groups of the associated infinite dimensional symmetric 
spaces.  Lastly, we combine the preceding with a Theorem of Oka to obtain 
a class of \lq\lq formal linear combinations\rq\rq of exceptional orbit 
hypersurfaces which have Milnor fibers which are homotopy equivalent to 
joins of the compact model submanifolds.  It follows that Milnor fibers 
for all of these hypersurfaces are essentially never homotopy equivalent 
to bouquets of spheres (even allowing differing dimensions).  \par

\end{abstract} 
\maketitle

\section*{Introduction}  
\label{S:sec0} 
\par
In this paper we investigate the topology of a class of highly nonisolated 
hypersurface singularities $\cE,$ each of which arises as the 
hypersurface of exceptional orbits, the {\em exceptional orbit variety}, 
for a rational representation of connected complex linear algebraic group 
$\rho : G \to GL(V)$ with an open orbit $\cU$.  Such a space with group 
action has been studied by Sato and Kimura \cite{So}, \cite{SK} and is 
called by them a {\em prehomogeneous vector space}, which we will 
shorten in this paper to just {\em prehomogeneous space}.  Both {\em 
determinantal hypersurfaces} and {\em linear free divisors} belong to this 
class.  We consider how the topology of such singularities can be 
determined. \par
For nonisolated singularities with small dimensional singular set, a body 
of work by Siersma \cite{Si}, \cite{Si2}, \cite{Si3},  Tibar \cite{Ti}, 
Nemethi \cite{Ne}, Zaharia \cite{Z}, etc. has used Morse-theoretic methods 
to extend the results for isolated singularities, and show that the Milnor 
fiber is still homotopy equivalent to a bouquet of spheres, except now the 
spheres may have different dimensions.  One might ask to what extent 
such results apply to these hypersurfaces of exceptional orbits, which 
now have highly nonisolated singularities.  Complex Morse theory has been 
applied to determine the vanishing topology of \lq\lq nonlinear 
sections\rq\rq of these hypersurfaces in terms of \lq\lq singular Milnor 
fibers\rq\rq in results by Mond, Goryunov, Bruce, Pike, and this author, see 
e.g. \cite{DM}, \cite{Br}, \cite{D1, D2}, \cite{GM}, and \cite{DP3}.  
However, these results provide little
 information about the topology of the hypersurface singularities 
themselves. We shall see that the structure of these singularities are 
quite different from those studied in the above work.  In fact, to 
determine their topology we take a very different approach which makes 
substantial use of their representations via prehomogeneous spaces.  
\par   
For the exceptional orbit varieties, we will be concerned with the 
topology of the Milnor fiber, the complement and the link; and we 
determine their homotopy types along with the (co)homology structure, 
homotopy groups, and the monodromy action.  
The changes in approach which make this possible are: 
\begin{itemize}
\item[i)] reversing the usual approach from first determining the Milnor 
fiber and monodromy to then compute the topology of the link and 
complement by beginning instead with the complement and deducing the 
topology of the link and Milnor fiber.
\item[ii)] replacing the local Milnor fiber by a global Milnor fiber, which is 
a smooth affine hypersurface that has a \lq\lq model complex 
geometry\rq\rq resulting from the transitive action of an associated 
linear algebraic group, yielding as a deformation retract a compact 
submanifold; 
\item[iii)]  using the relation between the two algebraic group actions and 
the topology of maximal compact subgroups to deduce the cohomological 
triviality of an associated fibration of the groups; and 
\item[iv)] using the preceding to determine the topology and cohomology 
of the Milnor fiber.
\end{itemize}
\par 
We will explicity compute the cohomology of the Milnor fiber, the 
complement and the link for two classes: determinantal hypersurfaces, 
which are varieties of singular matrices in the spaces of $m \times m$ 
matrices which may be symmetric, general, or skew-symmetric (with $m$ 
even); and exceptional orbit varieties in the general equidimensional case.
  This second class will in particular apply to both linear free (and free*) 
divisors, introduced for reductive groups by Buchweitz-Mond \cite{BM} and 
determined for \lq\lq block representations\rq\rq of solvable groups in 
Damon-Pike \cite{DP}, \cite{DP2}.  \par 
We express the cohomology algebras of the Milnor fibers as either exterior 
algebras, or for the one case of symmetric $m \times m$ matrices with 
$m$ even, modules over an exterior algebra on two generators, see 
Theorems~\ref{Thm2.1}  and \ref{Thm4.2}.
For determinantal hypersurfaces, we specifically show, 
Theorem~\ref{Thm2.1}, that the Milnor fibers are homotopy equivalent to 
compact classical symmetric spaces of Cartan; and besides obtaining 
their cohomology we give their homotopy groups in a stable range using 
Bott periodicity.  As well, certain of the Milnor fibers exhibit 
$2$-torsion in their cohomology.  We further show that for either class, 
excluding the symmetric $m \times m$ matrices with $m$ even, the 
monodromy acts trivially on the (rational or) complex cohomology of the 
Milnor fiber.  For the links in either case, we compute the cohomology 
vector space as a truncated and shifted exterior algebra and determine 
that its cohomology product structure is \lq\lq almost trivial\rq\rq, 
Theorems~\ref{Thm2.2} and \ref{Thm3.6}.  
\par
For the complex cohomology of the complement for reductive groups these 
results extend those obtained in Granger-Mond et al. \cite{GMNS}, although 
in that paper they also prove the cohomology is computed by the 
logarithmic complex, which we do not.  The results here also extend the 
results obtained for the complement and Milnor fiber in the case of 
solvable algebraic groups in Damon-Pike \cite{DP}.  
\par
Lastly, we combine in \S 6 the results on exceptional orbit hypersurfaces 
we have described together with a Theorem of Mutsuo Oka \cite{Ok} and 
the results of Siersma et al to compute the topology of a class of 
hypersurface singularities which are formed as \lq\lq formal sums\rq\rq 
of hypersurface singularities each of which is either an exceptional orbit 
hypersurface which we consider or is a  weighted homogeneous 
nonisolated hypersurface singularity considered by the Siersma group.  
These yield  hypersurface singularities whose Milnor fibers are homotopy 
equivalent to joins of compact manifolds, or more generally to a bouquet 
of suspensions of such joins of compact manifolds. 
\par
We would like to thank both David Mond and Shrawan Kumar for their input 
from several valuable conversations on these questions, and the referee 
for a number of very useful suggestions. \par

\section{Prehomogeneous Spaces and the Wang Sequence}  
\label{S:sec1}
\par
\subsection*{A Special Class of Prehomogeneous Spaces}
\par
We consider a special class of representations of a connected complex 
linear algebraic groups $\rho : G \to \GL(V)$ which have an open orbit 
$\cU$.  We specifically consider the cases where the {\em exceptional 
orbit variety}, which is the union of the orbits of positive codimension, is 
a hypersurface $\cE$.  If $H$ is the isotropy subgroup for a point $v_0 \in 
\cU$, then $H$ is a closed algebraic subgroup of $G$, and it is a basic fact, 
see e.g. Borel \cite{Bo2}, that both $G$ and $H$ have maximal compact 
subgroups $K$, resp. $L$, with $L \subset K$, which are strong 
deformation retracts of $G$, resp. $H$, and of the same ranks as $G$ and 
$H$. \par
 For example this will include the cases where $V = M$ is one of the 
spaces of complex matrices $M = Sym_m$ or $M = Sk_m$ (for $m = 2k$) 
acted on by $\GL_m(\C)$ by $B\cdot A = B A B^T$, or , $M = M_{m, m}$ and 
$\GL_m(\C)$ acts by left multiplication.  Each of these representations 
have open orbits and the resulting prehomogeneous space has an 
exceptional orbit variety $\cE$ which is a hypersurface. 
\begin{Definition} 
\label{Def1.1}
The {\em determinantal hypersurface} for the space of $m \times m$ 
symmetric or general matrices, denoted by $M = Sym_m$ or $M = M_{m, 
m}$ is the hypersurface of singular matrices defined by $\det : M \to \C$ 
and denoted by $\cD_m^{sy}$ for $M = Sym_m$, or $\cD_m$ for $M = M_{m, 
m}$.  For the space of $m \times m$  skew-symmetric matrices $M = 
Sk_m$ (for $m = 2k$) the determinantal hypersurface of singular matrices 
is defined by the Pfaffian $\Pf : Sk_m \to \C$, and is denoted by 
$\cD_m^{sk}$.  
\end{Definition}
\par
A second class of examples consists of \lq\lq equidimensional 
representations\rq\rq where $\dim_{\C} V = \dim_{\C} G$ with an open 
orbit $\cU$.  Then, necessarily $\cE$ is a hypersurface and the isotropy 
subgroup $H$ of a point in the open orbit is finite.  If an appropriate 
defining equation for $\cE$, obtained from the coefficient determinant of 
the associated vector fields for the action, is reduced then $\cE$ is a 
\lq\lq linear free divisor\rq\rq, introduced by Mond and Buchweitz 
\cite{BM}; and otherwise it is a slightly weaker linear free* divisor with 
nonreduced defining equation.  Partial results on the topology of the 
complement were obtained if $G$ is reductive, see e.g. \cite {GMNS}) and 
also for the complement and Milnor fiber for \lq\lq block 
representations\rq\rq for $G$ solvable (see \cite{DP}, \cite{DP2}).  We 
shall determine the topology in the general case.  
\par
For any of these cases, the action of $G$ commutes with the usual
$\C^*$-action on $V$; hence, the exceptional orbit variety $\cE$ is also 
invariant, and hence has a homogeneous reduced defining equation $f$ of 
degree $n$.  In the case of equidimensional representations  there is a 
defining equation given by the coefficient determinant of the associated 
vector fields of degree $N = \dim_C V = \dim_C G$.  If $\cE$ is a linear 
free divisor, this is a reduced definng equation, which we may choose for 
$f$; if not then $\deg f = n < N$.  \par
We consider the Milnor fibration of the hypersurface germ $(\cE, 0)$ in the 
standard form as $f^{-1}(S^1_{\gd}) \cap D^{2N}_{\gevar}$ with 
$D^{2N}_{\gevar}$ the disk about $0$ of sufficiently small radius 
$\gevar$, $S^1_{\gd}$ the boundary of the disk about $0$ in $\C$ of radius 
$\gd$, for $0 < \gd < < \gevar$.  Because of the homogeneity of $f$ we may 
adapt a standard argument for isolated singularities to obtain the global 
description of the Milnor fibration.  \par
\begin{Lemma}
\label{Lem1.1}
The Milnor fibration of $(\cE, 0)$ is diffeomorphic to the fibration $f| E : E 
\to S^1$, where $E = f^{-1}(S^1)$, with fiber $F = f^{-1}(1)$.  This is the 
restriction of the fibration $f: V \backslash \cE \to \C^*$ to $S^1 \subset 
\C^*$, and the inclusion $E \subset V \backslash \cE$ is a homotopy 
equivalence.  
\end{Lemma}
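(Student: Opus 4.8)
The plan is to exploit the homogeneity of $f$ (of degree $n$) through the standard scalar action of $\C^*$ on $V$, namely $t\cdot v = tv$, under which $f(t\cdot v) = t^n f(v)$ and $\cE = f^{-1}(0)$ is invariant. The Euler vector field $e$ satisfies $e f = n f$, so $f$ has no critical points on $V\setminus\cE$ and the scaling action can be used to build explicit local trivializations. I would first show that $f : V\setminus\cE \to \C^*$ is a locally trivial fibration with fiber $F = f^{-1}(1)$. On any simply connected $U \subset \C^*$, choose a smooth branch $c \mapsto c^{1/n}$; then $(w,c) \mapsto c^{1/n}\cdot w$ is a diffeomorphism $F\times U \xrightarrow{\sim} f^{-1}(U)$ with inverse $v \mapsto (f(v)^{-1/n} v,\, f(v))$, giving local triviality. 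Restricting to $S^1 = \{|c| = 1\}$ shows at once that $f|E : E \to S^1$ is a fiber bundle with the same fiber $F$, and that it is the restriction of $f : V\setminus\cE \to \C^*$; this establishes the middle assertion of the statement.

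Next I would verify that the inclusion $E \subset V\setminus\cE$ is a homotopy equivalence by lifting, through the scaling action, the radial deformation retraction of $\C^*$ onto $S^1$. Explicitly, set $R_s(v) = |f(v)|^{-(1-s)/n}\, v$ for $s \in [0,1]$. This is smooth on $V\setminus\cE$ (where $|f| > 0$), preserves $V\setminus\cE$, and satisfies $R_1 = \id$, $f(R_0(v)) = f(v)/|f(v)| \in S^1$ so $R_0(V\setminus\cE) \subset E$, and $R_s|E = \id$ since $|f|\equiv 1$ on $E$. Hence $R_s$ is a strong deformation retraction of $V\setminus\cE$ onto $E$, yielding the desired homotopy equivalence.

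It remains to identify $f|E : E \to S^1$ with the local Milnor fibration $f^{-1}(S^1_{\gd})\cap D^{2N}_{\gevar} \to S^1_{\gd}$, and here homogeneity is used once more. Real scaling $m_t(v) = tv$ carries $f^{-1}(\gd)$ to $f^{-1}(t^n\gd)$ and $D^{2N}_{\gevar}$ to $D^{2N}_{t\gevar}$, so the local Milnor fiber $f^{-1}(\gd)\cap D^{2N}_{\gevar}$ is diffeomorphic to the truncation $F\cap D^{2N}_{\gevar\gd^{-1/n}}$ of the affine fiber $F$ by a ball of large radius. Because $\cE$ and $F$ are conic/homogeneous, $F$ meets every sphere of sufficiently large radius transversally, so for $\gd \ll \gevar^{n}$ this truncation is diffeomorphic to all of $F$; performing these identifications compatibly as the value rotates over $S^1$ yields a diffeomorphism of the two fibrations.

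The main obstacle is precisely this last step: matching the compact local model to the noncompact affine fiber $F$ rests on the transversality of $F$ to spheres of large radius --- a Milnor/curve-selection type fact which follows from homogeneity but must be invoked with care --- and one must then check that the fiberwise diffeomorphisms assemble into a genuine isomorphism of fibrations over $S^1$ compatible with the monodromy, not merely a fiberwise correspondence.
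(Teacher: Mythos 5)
Your proposal is correct and takes essentially the same route as the paper: the paper's proof likewise uses the homogeneous $\R_+$-scaling action to identify $E = f^{-1}(S^1)$ with $f^{-1}(S^1_{\gd})$, and then invokes exactly the fact you flag as the main obstacle --- that the distance-squared function has no critical points on $f^{-1}(S^1_{\gd}) \setminus D^{2N}_{\gevar}$ (equivalently, transversality of the fibers to large spheres), with Morse theory assembling the fiberwise identifications into a diffeomorphism of fibrations, citing \cite[Thm 3.2]{DP} for details. Your explicit local trivializations via $n$-th root branches and the strong deformation retraction $R_s(v) = |f(v)|^{-(1-s)/n}v$ are welcome elaborations of steps the paper leaves implicit.
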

\begin{proof}
The proof is a slight modification of that for the case of isolated  
singularities.  It uses the induced homogeneous $\R_+$-action to establish 
a diffeomorphism of fibrations between $E = f^{-1}(S^1)$ and 
$f^{-1}(S^1_{\gd})$.  If $0 < \gd << \gevar$ are sufficiently small, it is 
proven that the restriction of the distance-squared function $\| \cdot\|^2$ 
to $f^{-1}(S^1_{\gd}) \backslash D^{2N}_{\gevar}$ has no critical points, 
and by Morse theory we deduce that the Milnor fibration is diffeomorphic 
as a fibration to $f^{-1}(S^1_{\gd})$.  For the case of equidimensional 
representations of solvable linear algebraic groups the details are given 
in the proof of \cite[Thm 3.2]{DP}.  The general case follows the same line 
of reasoning.  
\end{proof}
We refer to the fibration $F \hookrightarrow E \to S^1$ as the {\em global 
Milnor fibration} and $F$ as the {\em global Milnor fiber}.  We note that by 
e.g. Kato-Masumoto \cite{KM}, provided $N \geq 2$, $F$ is path-connected.
\par
Second, the cohomology of the complement $V \backslash \cE$ (and hence 
$E$), can be expressed in terms of the maximal compact subgroups.
\begin{Lemma}
\label{Lem1.2}
If $G$ and the isotropy subgroup $H$ of $v_0 \in \cU$ have maximal 
compact subgroups $K$, resp. $L$, then $V \backslash \cE$ is homotopy 
equivalent to $K/L$.  Hence, 
$$ H^*(V \backslash \cE) \,\, \simeq H^*(K/L) \, . $$
\end{Lemma}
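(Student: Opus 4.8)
The plan is to first identify the complement with the open orbit and then transfer the computation to the compact subgroups by comparing two fibrations. Since $G$ acts on $V$ with orbits that are either the open orbit $\cU$ (codimension $0$) or of positive codimension, and the latter are by definition exactly the points of $\cE$, we have $V \backslash \cE = \cU$. As $V$ is irreducible (being a vector space) the open orbit is unique, so $\cU$ is a single $G$-orbit and the orbit map $G/H \to \cU$, $gH \mapsto \rho(g) v_0$, is a $G$-equivariant diffeomorphism. Thus $V \backslash \cE \cong G/H$, and it suffices to prove $G/H \simeq K/L$.

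For this I would use the facts already granted, namely that the inclusions $K \hookrightarrow G$ and $L \hookrightarrow H$ are strong deformation retracts, hence homotopy equivalences, together with $L \subset K$. Since $L$ is a closed subgroup of both $H$ and $K$, the inclusion induces a well-defined map $\phi : K/L \to G/H$, $kL \mapsto kH$, fitting into the commutative ladder
\[
\begin{CD}
L @>>> K @>>> K/L \\
@VVV @VVV @VV{\phi}V \\
H @>>> G @>>> G/H
\end{CD}
\]
in which the two left-hand vertical maps are homotopy equivalences. Both horizontal maps $K \to K/L$ and $G \to G/H$ are principal bundles, hence Serre fibrations.

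Comparing the long exact sequences of homotopy groups of these two fibrations produces a ladder in which every vertical map not involving $\phi$ is an isomorphism on each $\pi_n$. Applying the five lemma degree by degree then shows that $\phi_* : \pi_n(K/L) \to \pi_n(G/H)$ is an isomorphism for all $n$. Since $G$ and $K$ are connected, $G/H$ and $K/L$ are connected, so $\phi$ is a weak homotopy equivalence; because $K/L$ and $G/H$ are smooth manifolds (hence have the homotopy type of CW complexes), Whitehead's theorem upgrades this to a genuine homotopy equivalence $K/L \simeq G/H \simeq V \backslash \cE$, and the stated cohomology isomorphism follows at once.

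I expect the only real subtlety to be the low-degree ($\pi_0$, $\pi_1$) bookkeeping in the five lemma, which is why I record the connectedness of $K$ and $G$ from the start: this reduces the delicate end of the ladder to $\pi_0(K) = \pi_0(G) = 0$ together with the bijection $\pi_0(L) \cong \pi_0(H)$, after which the higher-degree steps are purely formal. An alternative that sidesteps this bookkeeping, at the cost of invoking more structure theory, is Mostow's theorem on homogeneous spaces of Lie groups with finitely many components, which realizes $G/H$ as the total space of a $K$-equivariant vector bundle over $K/L$; I would mention it since it yields the sharper conclusion that $K/L$ is in fact a deformation retract of $V \backslash \cE$.
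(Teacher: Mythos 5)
Your proof is correct and takes essentially the same route as the paper's: identify $V \backslash \cE$ with the homogeneous space $G/H$ via the orbit map, compare the long exact homotopy sequences of the fibrations $L \to K \to K/L$ and $H \to G \to G/H$ using the five lemma, and conclude with Whitehead's theorem applied to path-connected spaces of CW homotopy type. The low-degree bookkeeping you flag is exactly the point the paper addresses, by noting that the homotopy equivalence $L \hookrightarrow H$ restricts to a homotopy equivalence of identity components so that $\pi_j(L) \simeq \pi_j(H)$ for all $j \geq 0$, matching your treatment.
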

\begin{proof}
 First, the action of $G$ on $v_0 \in \cU$, gives the open orbit $V 
\backslash \cE  = \cU \simeq G/H$ as a homogenenous space.  It is 
sufficient to show the inclusion $K/L \hookrightarrow G/H$ is a homotopy 
equivalence.  As pointed out by Shrawan Kumar, this is a simple 
consequence of the long exact homotopy sequence for a fibration.  The 
groups $G$, resp. $H$, are homotopy equivalent to their maximal compact 
subgroups, $K$, resp.$L$.  We consider the long exact homotopy sequence 
of the fibrations (see e.g. \cite[Chap 7, \S 2, Thm 10]{Sp}) in the 
horizontal rows of (\ref{CD1.3}). 
\begin{equation}
\label{CD1.3}
\begin{CD} 
 {L} @>>>  {K} @>>> {K/L}    \\
 @V{\iti^{\,\prime}}VV     @V{\iti}VV      @V{\iti_0}VV   \\
 {H} @>>>  {G}  @>>>  {G/H}  
\end{CD}  
\end{equation}
which has the form
\begin{equation}
\label{CD1.3a}
\begin{CD} 
{  } @>>>  {\pi_j(L, Id)} @>>>  {\pi_j(K, Id)} @>>> {\pi_j(K/L, \{L\})} @>>> { }  
\\
 @.    @V{\iti^{\,\prime}_*}VV     @V{\iti_*}VV      @V{\iti_{0\, *}}VV   @. 
\\
{  } @>>>  {\pi_j(H, Id)} @>>>  {\pi_j(G, Id)} @>>> {\pi_j(G/H, \{H\})} @>>> { } 
\end{CD}
\end{equation}  
We note that when $j = 0$, both $\iti_*$ and $\iti^{\,\prime}_*$ are 
bijections of pointed sets so $K$ is connected and $H$ and $L$ have the 
same finite number of connected components.  The homotopy equivalence 
$\iti^{\,\prime}: L \hookrightarrow H$ restricts to a homotopy equivalence 
$\iti^{\,\prime}: L_0 \hookrightarrow H_0$ of the connected components 
of the identity.  Thus, $\iti^{\,\prime}_* : \pi_j(L, Id) = \pi_j(L_0, Id) 
\simeq \pi_j(H_0, Id) = \pi_j(H, Id)$ is an isomorphism for all $j \geq 0$.  
Hence, we can apply the five Lemma to conclude first that $\pi_i(K/L, 
\{L\}) \simeq \pi_i(G/H, \{H\})$ for all $i \geq 0$.  As both are 
path-connected CW-complexes, it follows by Whitehead\rq s theorem that 
$K/L$ is homotopy equivalent to $G/H$, giving the result.
\end{proof} 
\par 
Next, we consider the global Milnor fibration $F \hookrightarrow E \to 
S^1$.  First, from the long exact homotopy sequence for the fibration, we 
obtain $\pi_i(F) \simeq \pi_i(E)$ for $i \geq 2$ and, as $F$ is 
path-connected, the short exact sequence 
\begin{equation}
\label{CD1.4}
\begin{CD} 
 {1} @>>> {\pi_1(F)} @>>> {\pi_1(E)} @>p>> {\pi_1(S^1)}  @>>>  {1} \, .
\end{CD}  
\end{equation}
Hence, by the Hurewicz theorem and the universal coefficient theorem, 
$p_* : H_1(E) \to H_1(S^1)$ is surjective, and $p^* : H^1(S^1; \bk) \to 
H^1(E; \bk)$ is injective for any field $\bk$ of characteristic $0$, so the 
image of a generator of $H^1(S^1; \bk)$ is a nonzero class $s_1 \in 
H_1(E)$.  Furthermore, this class restricts to $0$ in $H^1(F; \bk)$, so then 
does the ideal $\langle s_1\rangle \cdot H^*(E; \bk)$. \par
\begin{Example}
\label{Ex1.3}
For the case of complex cohomology, if $\gg$ is a generator of 
$\pi_1(S^1)$, then there is a $\gb \in \pi_1(E)$ such that $f_*(\gb) = \gg$.  
Hence, $\frac{df}{f} = f^*(\frac{dz}{z})$ is a closed $1$-form which 
satisfies
$$  \int_{\gb} \frac{df}{f} \,\, = \,\, \int_{f_*(\gb)} \frac{dz}{z} \,\, = \,\,  
2\pi i \, .$$
Hence ,$\gw_1 = \frac{df}{f}$ defines a nontrivial cohomology class $s_1$ 
in $H^1(E; \C)$.  Moreover, its restriction to $F$ is $0$ as $df = 0$ as $f 
\equiv 1$ on $F$.  Thus, the ideal $\C\langle s_1\rangle \cdot H^*(E; \C)$ 
restricts to $0$ in $H^*(F; \C)$.  
\end{Example}
\par
\subsection*{Model Complex Geometry for the Milnor Fiber} 
We note that although by Lemma \ref{Lem1.1} the Milnor fiber and global 
Milnor fiber are diffeomorphic, they are not holomorphically 
diffeomorphic.  The different complex structure on the global Milnor fiber 
allows us to introduce an alternate way to view the topology on the global 
Milnor fiber resulting from it having a form of \lq\lq complex model  
geometry\rq\rq in the sense of Thurston (see e.g. \cite[Chap. 3]{Th}).  By 
this we mean a smooth affine submanifold $X \subset \C^N$ together with 
an algebraic action of a connected linear algebraic group $G$ on $X$, which 
is transitive with $H$ an isotropy subgroup of a point.  We denote it by the 
triple $(X, G, H)$.  It provides a method for beginning to understand the 
geometry of the global Milnor fiber.  Actually, Thurston\rq s definition of 
geometry over the reals required $X$ be simply connected, which we refer 
to it as a {\em simple model geometry}.   However, without this 
restriction, the following is true.
\begin{Proposition}
\label{Prop1.1}
For a prehomogeneous space defined by the representation $\rho : G \to 
\GL(V)$ with exceptional orbit variety a hypersurface, there is a 
connected codimension one algebraic subgroup $G^{\prime}_0$ of $G$ 
which acts transitively on the global Milnor fiber $F$, with isotropy 
subgroup $H^{\prime}$ so that $(F, G^{\prime}, H^{\prime})$ defines a 
model complex geometry on $F$.  This model is simple in the case of 
determinantal hypersurfaces.  
\end{Proposition}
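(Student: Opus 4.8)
The plan is to realize $f$ as a relative invariant of the $G$-action and take $G'_0$ to be the identity component of the kernel of the associated character. Since $\cE$ is $G$-invariant with reduced homogeneous defining equation $f$ of degree $n$, for each $g \in G$ the polynomial $v \mapsto f(\rho(g)v)$ is again homogeneous of degree $n$ and vanishes exactly on $\rho(g)^{-1}\cE = \cE$; as a reduced homogeneous polynomial is determined up to a nonzero scalar by its zero locus, we get $f(\rho(g)v) = \chi(g) f(v)$ for a unique $\chi(g) \in \C^*$. Multiplicativity of $\chi$ follows from comparing $f(\rho(gg')v)$ with $f(\rho(g)\rho(g')v)$, so $\chi : G \to \C^*$ is a character, and it is nontrivial since otherwise $f$ would be constant on the Zariski-dense open orbit $\cU$, contradicting $n \geq 1$. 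Set $G' = \ker\chi$; as the image $\chi(G)$ is a nontrivial connected subgroup of $\C^*$, hence all of $\C^*$, the group $G'$ has codimension one in $G$. Let $G'_0$ be its identity component, a connected codimension-one algebraic subgroup.

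Next I would verify that $G'_0$ induces a model geometry on $F = f^{-1}(1)$. For $g \in G'_0 \subset \ker\chi$ one has $f(\rho(g)v) = \chi(g) f(v) = f(v)$, so $G'_0$ preserves every level set of $f$, in particular $F$. The fibre $F$ is a smooth affine hypersurface: for $v \in \cU = V \setminus \cE$ Euler's relation gives $\sum_i v_i\,\partial f/\partial v_i = n\,f(v) \neq 0$, so $df_v \neq 0$ and every nonzero value, in particular $1$, is a regular value. For transitivity, fix $v_0 \in F$; given any $v_1 \in F$, transitivity of $G$ on $\cU \supset F$ supplies $g$ with $\rho(g)v_0 = v_1$, and then $\chi(g) f(v_0) = f(v_1)$ forces $\chi(g) = 1$, so $g \in G'$. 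Thus $G'$ acts transitively on $F$, and the isotropy subgroup $H$ of $v_0$ (which lies in $G'$, since $h \in H$ gives $\chi(h) = 1$) realizes $F \simeq G'/H$.

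The main obstacle, and the one genuinely substantive step, is passing from the possibly disconnected $G'$ to its identity component $G'_0$. Here I would argue by dimension: since $\dim G'_0 = \dim G' = \dim F$, each orbit $G'_0 \cdot v$ is open in $F$; these open orbits partition $F$, and $F$ is connected (indeed path-connected by Kato--Masumoto since $N \geq 2$), so there can be only one orbit and $G'_0$ already acts transitively, with isotropy $H' = H \cap G'_0$ and $F \simeq G'_0/H'$. Combined with the smoothness of $F$ and the algebraicity of the restricted linear action, this exhibits $(F, G'_0, H')$ as a model complex geometry in the required sense.

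Finally, for the simplicity claim in the determinantal cases I would compute $\chi$, $G'_0$, and $H'$ explicitly in each family and apply the long exact homotopy sequence of the fibration $H' \hookrightarrow G'_0 \to F$. For $M = M_{m,m}$ with $f = \det$ one gets $\chi = \det$, $G'_0 = SL_m(\C)$ and $H' = \{e\}$, so $F \cong SL_m(\C)$; for $M = Sym_m$ with $f = \det$ one has $\chi(B) = \det(B)^2$, $G'_0 = SL_m(\C)$ and $H' = SO_m(\C)$; for $M = Sk_m$ with $f = \Pf$ one has $\chi = \det$, $G'_0 = SL_m(\C)$ and $H' = Sp_m(\C)$. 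In each case $G'_0 = SL_m(\C)$ is simply connected and $H'$ is connected, so the segment $\pi_1(G'_0) \to \pi_1(F) \to \pi_0(H')$ of the exact sequence forces $\pi_1(F) = 1$. Hence $F$ is simply connected and the model geometry is simple.
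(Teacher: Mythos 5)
Your construction of the character $\chi$, your proof that the full kernel $G' = \ker\chi$ acts transitively on $F$ (via $\chi(g)f(v_0) = f(v_1)$, a clean shortcut the paper does not use), and your explicit simple-connectivity computations for the determinantal cases are all correct; the last is even a genuine alternative to the paper, which instead gets simplicity in Remark~\ref{Rem1.2} from Kato--Matsumoto \cite{KM} applied to the codimension $\geq 2$ singular sets. But the step you yourself flag as the substantive one --- passing from the possibly disconnected $G'$ to its identity component $G'_0$ --- rests on a false equality: $\dim G'_0 = \dim G' = \dim F$. From $\chi(G) = \C^*$ one only gets $\dim G' = \dim G - 1$, while $\dim F = \dim_{\C} V - 1$; these agree exactly in the equidimensional case $\dim_{\C} G = \dim_{\C} V$. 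The proposition, however, is stated for general prehomogeneous spaces with $\cE$ a hypersurface, and in the determinantal cases the equality fails outright: for $M = Sym_m$ one has $\dim G' = m^2 - 1$ but $\dim F = \binom{m+1}{2} - 1$. Indeed your own final paragraph contradicts the dimension count, since there $G'_0 = SL_m(\C)$ acts on $F$ with positive-dimensional isotropy $SO_m(\C)$ (resp.\ $Sp_k(\C)$), so $\dim G'_0 > \dim F$.

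The gap is repairable, and the paper's Lemma~\ref{Lem1b.3} shows how: the orbit map $\Phi : G \to V$, $g \mapsto g\cdot v_0$, is a submersion at $Id$ because $v_0$ lies in the open orbit $\cU$; restricting $d\Phi(Id)$ to the codimension-one subspace $T_{Id}G'_0$ can drop the rank by at most one, and since the image lies in $T_{v_0}F$ (of dimension $N-1$), the rank is exactly $N-1$. Hence every $G'_0$-orbit is open in $F$, and connectedness of $F$ forces a single orbit. Alternatively, starting from your transitivity of $G'$, you could observe that $G'_0$ is normal of finite index in $G'$, so $F$ is a finite union of mutually diffeomorphic locally closed $G'_0$-orbits; finitely many sets of dimension $< N-1$ cannot cover $F$, so all orbits have full dimension, are open, and connectedness again finishes. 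Either patch replaces the dimension count, after which the rest of your argument stands; your count as written is valid only in the equidimensional case, where the isotropy is finite (which is precisely how the paper uses it in the second half of Lemma~\ref{Lem1b.3}).
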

The proof of this will follow from results in \S~\ref{S:sec2}.  
This model geometry type representation of the global Milnor fiber is the 
essential ingredient, in place of Morse theory, for the analysis of the 
topology.   To carry out this analysis, we will make considerable use of 
the Wang cohomology sequence to relate the cohomology of $E$ with that 
of $F$. 
\vspace{1ex}
\subsection*{The Wang Sequence} 
\par
We consider a fibration $F \hookrightarrow E \overset{p}\rightarrow S^1$.  
We let $\gs = \gs_1$ be the monodromy map where $\gs_t$ denotes the 
lift of the curve  $\varphi(t) = e^{2\pi i t}$ on $S^1$. to the family of 
homeomorphisms  $\gs_t : F \to F_t = p^{-1}(\varphi(t))$.  
\begin{Definition}
We say that the fibration $F \hookrightarrow E \to S^1$ is {\em 
(rationally) cohomologically trivial} if $\gs^* : H^*(F; \Q) \to H^*(F; \Q)$ is 
the identity.
\end{Definition}
\begin{Remark}
\label{Rem1.1} 
By the universal coefficient theorem it likewise follows that for any field 
$\bk$ of characteristic $0$, $\gs^* = id$ on $H^*(F; \bk)$ and conversely if 
it holds for $\bk$ of characteristic $0$ then it also holds for $\Q$.
  Also, if $H^*(F; \Z)$ is a free abelian group then it also follows that the 
Milnor fibration is cohomologically trivial for integer coefficients.  
\par
We also might ask whether the monodromy is geometrically trivial in that 
it is homotopy equivalent to the identity.  David Mond has been able to 
show directly for several examples in the equidimensional case, including 
certain quiver representations, that the monodromy is geometrically 
trivial; however, for certain other equidimensional cases we discussed 
neither of us were unable to verify this.  For now there is no general 
result for geometric triviality of monodromy, even for certain special 
classes such as quiver representations, so it is still an open question.  
However, we will see that being cohomologically trivial will suffice for 
many questions.  
\end{Remark}
To begin, there is the following result.
\begin{Proposition}
\label{Prop1.5}
For the fibration $F \hookrightarrow E \to S^1$, with $F$ connected and 
$\bk$ a field of characteristic $0$, the following are equivalent.
\begin{itemize}
\item[i)] The fibration is cohomologically trivial.
\item[ii)]  there is an isomorphism of graded vector spaces.
\begin{align}
\label{Eqn1.5}
 H^*(E; \bk) \,\, &\simeq \,\,  \gL^* \bk\langle s_1\rangle \otimes H^*(F; 
\bk) \notag \\ 
 &\simeq \,\,  H^*(F; \bk) \,\,\, \oplus \,\,\, \bk\langle s_1\rangle 
\otimes H^*(F; \bk)\, . 
\end{align}
\item[iii)] 
\begin{equation}
\label{Eqn1.5b}
 \dim_{\bk} H^*(E; \bk) \,\, = \,\, 2 \,\dim_{\bk} H^*(F; \bk)\, .
\end{equation}
\end{itemize}
\par 
Moreover, if the preceding hold then (\ref{Eqn1.5}) is an isomorphism of 
graded $\gL^* \bk\langle s_1\rangle$-modules, where the exterior algebra 
$\gL^* \bk\langle s_1\rangle$ is on one generator $s_1$.  
\end{Proposition}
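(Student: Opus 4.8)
The plan is to derive all three implications, together with the final module assertion, from a single structural input: the Wang cohomology sequence of the fibration $F \hookrightarrow E \to S^1$. Writing $T_q = \gs^* - \mathrm{id} : H^q(F;\bk) \to H^q(F;\bk)$, the Wang sequence breaks into short exact sequences
$$ 0 \longrightarrow \coker T_{q-1} \longrightarrow H^q(E;\bk) \overset{i^*}{\longrightarrow} \ker T_q \longrightarrow 0 , $$
where $i^*$ is restriction to the fiber and the left-hand map is the connecting homomorphism $\gd$ (whose image is exactly $\ker i^*$). I would take these sequences as given and read off the equivalences.

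For (i) $\Rightarrow$ (ii): cohomological triviality means $\gs^* = \mathrm{id}$, i.e. every $T_q = 0$, so $\coker T_{q-1} = H^{q-1}(F;\bk)$ and $\ker T_q = H^q(F;\bk)$, and the short exact sequence becomes $0 \to H^{q-1}(F;\bk) \to H^q(E;\bk) \to H^q(F;\bk) \to 0$. Over the field $\bk$ this splits, and summing over $q$ produces $H^*(E;\bk) \simeq H^*(F;\bk) \oplus H^{*-1}(F;\bk)$, which is exactly (\ref{Eqn1.5}) once the degree shift is recorded as tensoring with the exterior generator $\bk\langle s_1\rangle$. The implication (ii) $\Rightarrow$ (iii) is then immediate, since $\dim_{\bk} \gL^*\bk\langle s_1\rangle = 2$.

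The one implication requiring a genuine argument is (iii) $\Rightarrow$ (i), which I would settle by a dimension count. From the short exact sequence, $\dim_{\bk} H^q(E;\bk) = \dim_{\bk}\coker T_{q-1} + \dim_{\bk}\ker T_q$; and since $T_q$ is an endomorphism of the finite-dimensional space $H^q(F;\bk)$, rank--nullity gives $\dim_{\bk}\coker T_q = \dim_{\bk}\ker T_q$. Summing over $q$ then yields $\dim_{\bk} H^*(E;\bk) = 2\sum_q \dim_{\bk}\ker T_q$. Because $\dim_{\bk}\ker T_q \leq \dim_{\bk} H^q(F;\bk)$ with equality precisely when $T_q = 0$, the hypothesis (\ref{Eqn1.5b}) forces $\ker T_q = H^q(F;\bk)$ for every $q$, hence $\gs^* = \mathrm{id}$, which is (i).

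Finally, for the module statement I would first note that $s_1 \cup s_1 = 0$, since $s_1 = p^*$ of a generator of $H^1(S^1)$ and $H^2(S^1;\bk)=0$; thus cup product by $s_1$ makes $H^*(E;\bk)$ a module over $\gL^*\bk\langle s_1\rangle$. Choosing a $\bk$-linear splitting $s$ of $i^*$ (available by (i)), I would define $\Phi(1\otimes x) = s(x)$ and $\Phi(s_1\otimes x) = s_1\cup s(x)$; as $i^*(s_1)=0$ (the restriction of $s_1$ to $F$ vanishes, as noted before Example~\ref{Ex1.3}), the second lands in $\ker i^*$. That $\Phi$ is a module map is a two-line check using $s_1\cup s_1=0$, and the content is that $\Phi$ is a linear isomorphism, i.e. that $x \mapsto s_1 \cup s(x)$ maps $H^{q-1}(F;\bk)$ isomorphically onto $\ker i^*$. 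I expect this identification of the Wang connecting map $\gd$ with cup product by $s_1$ to be the main obstacle, and I would obtain it from the multiplicativity of the Leray--Serre spectral sequence of $p : E \to S^1$: the sequence degenerates at $E_2$ since $S^1$ has cohomological dimension one, with $s_1$ the generator of $E_2^{1,0}$, and cup product by $s_1$ induces the canonical isomorphism $\ker T_q \simeq \coker T_q$ of invariants with coinvariants, which is the identity once $T=0$.
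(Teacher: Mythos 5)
Your proof of the three equivalences is correct and is essentially the paper's own argument: the paper likewise splits the Wang sequence into short exact sequences $0 \to \coker(\id - \gs^*)_{q} \to H^{q+1}(E;\bk) \to \ker(\id-\gs^*)_{q+1} \to 0$, derives (ii) and (iii) from $\gs^* = \id$, and proves (iii) $\Rightarrow$ (i) by exactly your rank--nullity count, namely $\dim_\bk H^*(E;\bk) = 2\sum_q \dim_\bk \ker T_q$ together with the termwise inequality $\dim_\bk \ker T_q \leq \dim_\bk H^q(F;\bk)$ (both arguments tacitly use finite dimensionality of $H^q(F;\bk)$, which holds here). Where you genuinely diverge is the final module statement. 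The paper dispatches it in one step: by (\ref{CD1.5}) one chooses classes in $H^q(E;\bk)$ restricting to a basis of $H^q(F;\bk)$ and invokes the Leray--Hirsch theorem to conclude $H^*(E;\bk)$ is a free $H^*(S^1;\bk) \simeq \gL^*\bk\langle s_1\rangle$-module on these classes. You instead build the module isomorphism by hand from a splitting $s$ of $\iti^*$ and reduce everything to identifying $x \mapsto s_1 \cup s(x)$ as an isomorphism onto $\ker \iti^*$, which you then extract from the multiplicative Serre spectral sequence of $p: E \to S^1$ (with local coefficients; note $E_2^{1,0} = H^1(S^1;\mathcal{H}^0)$ has untwisted coefficients since $F$ is connected). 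Your sketch is sound: the sequence has only two columns so degenerates at $E_2$, the filtration satisfies $F^1H^n(E) = \ker \iti^*= E_\infty^{1,n-1}$, and multiplicativity of $E_\infty$ identifies cup product with $s_1$ with the invariants-to-coinvariants map $\ker T_q \to \coker T_q$, which is the identity when $T_q = 0$. In effect you have re-proved the relevant special case of Leray--Hirsch rather than citing it --- unsurprising, since Leray--Hirsch is itself standardly deduced from this spectral-sequence multiplicativity. The paper's route is shorter; yours is more self-contained and has the side benefit of exhibiting the module structure explicitly, with $\ker \iti^*$ realized as $s_1 \smile H^*(E;\bk)$, a fact the paper uses later (e.g.\ in the proof of Theorem~\ref{Thm4.2}) without spelling out.
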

\begin{proof}
\par
First, we immediately observe that ii) imples iii). 
Second, suppose i) holds.  We consider the Wang sequence in cohomology 
(see e.g. \cite[Chap. 8 \S 5 Cor. 6]{Sp}) applied to our situation.  

\begin{multline}
\label{CD1.4a}
\begin{CD}
@>>> {H^q(F; \bk)} @>{\theta}>> {H^q(F; \bk)} @>>> {H^{q+1}(E; \bk)} 
\end{CD} \\ 
\begin{CD}
@>{\iti^*}>> {H^{q+1}(F; \bk)}  @>{\theta}>>  {H^{q+1}(F; \bk)} @>>> \, 
\end{CD}  
\end{multline}
where $\theta = id -\gs^*$ for $\gs$ the monodromy map.  
\par 
If the fibration is cohomologically trivial, then $\gs^* = id$, and  
(\ref{CD1.4a}) reduces to the short exact sequences for $q \geq -1$,
\begin{equation}
\label{CD1.5}
\begin{CD} 
 {0} @>>> {H^q(F; \bk)} @>>> {H^{q+1}(E; \bk)} @>{\iti^*}>> {H^{q+1}(F; \bk)} 
@>>> {0} \, 
\end{CD}
\end{equation}
It then follows that the Betti numbers, which equal  
$\gb_{q} = \dim_{\bk} H^q( \,\cdot \, ; \bk)$, satisfy $\gb_{q+1}(E) = 
\gb_{q}(F) + \gb_{q+1}(F)$  for $q \geq -1$ (where $\gb_{-1} = 0$).  This 
says that there is a graded vector space isomorphism (\ref{Eqn1.5}) and 
also that (\ref{Eqn1.5b}) holds.  \par
Third, if iii) holds we show that i) holds.  
For any $q \geq 0$, let $K_q = \ker(I - \gs^*) : H^q(F; \bk) \to H^q(F; \bk)$ 
and $C_q = \coker(I - \gs^*)$.  Then, $\dim_{\bk} K_q = \dim_{\bk} C_q$ 
and from (\ref{CD1.4a}) 
\begin{equation}
\label{Eqn1.6}
\gb_{q+1}(E) \,\, = \,\, \dim_{\bk} K_{q+1} + \dim_{\bk} C_q \,\, = \,\, 
\dim_{\bk} K_{q+1} + \dim_{\bk} K_q \, .
\end{equation}  
Hence, if we choose an $r > \dim F$ so that $H^j(F; \bk) = 0$ for $j \geq r$, 
then summing (\ref{Eqn1.6}) over $q$ yields
\begin{align}
\label{Eqn1.7}
  \dim_{\bk} H^*(E; \bk) \,\, &= \,\,  \sum_{q = -1}^{r-1}\dim_{\bk} 
K_{q+1} \,\,  +\,\,  \sum_{q = 0}^{r}\dim_{\bk} K_q \notag   \\
&= \,\,   2 \sum_{q = 0}^{r}\dim_{\bk} K_q
\end{align}
Hence, if the fibration is not cohomologically trivial then for some $q$, 
$\dim_{\bk} K_q < \dim_{\bk} H^q(F; \bk)$.  It follows that the RHS of 
(\ref{Eqn1.7}) is less than $2 \,\dim_{\bk} H^*(F; \bk)$, so iii) doesn\rq t 
hold. 
\par
Lastly, if the above hold, then (\ref{Eqn1.5}) is an isomorphism of graded 
vector spaces, and (\ref{CD1.4a}) reduces to the short exact sequences  
(\ref{CD1.5}).  Thus, for each $q$, we can find a set of elements $\{ 
\varphi^{(q)}_1, \dots, \varphi^{(q)}_{m_q}\}$ in $H^q(E; \bk)$ which 
restrict to a basis for $H^q(F; \bk)$.  Then, we can apply the Leray-Hirsch 
Theorem, see e.g. \cite[Chap. 5, \S 7,Thm 9]{Sp}, to conclude that $H^*(E; 
\bk)$ is a free 
$H^*(S^1; \bk)$-module on this set of generators.  As $H^*(S^1; \bk) \simeq 
\gL^*\bk\langle s_1\rangle$, we conclude that  (\ref{Eqn1.5}) is an 
isomorphism of graded $\gL^*\bk\langle s_1\rangle$-modules.
\end{proof}
\par

\vspace{2ex}
 \subsection*{Topology of the Complement, Milnor Fiber, and Link} 
\par
We consider a representation $\rho : G \to \GL(V)$ which belongs to the 
special class of prehomogeneous spaces: with open orbit $\cU$, isotropy 
subgroup $H$ of $v_0 \in \cU$, with maximal compact subgroups $K$, resp. 
$L$, exceptional orbit variety $\cE$ which is a hypersurface, and whose 
global Milnor fiber is cohomologically trivial.  We apply the preceding 
results to draw conclusions about the cohomology of the complement, the 
Milnor fiber and link of $\cE$ using the preceding notation.  \par
We have already considered both the cohomology of the complement and 
Milnor fiber.  Since $\cE$ is invariant under the usual $\C^*$-action, we 
may use the restricted  $\R_+$-action to conclude that $L(\cE) =  \cE \cap 
S^{2N-1}$ is diffeomorphic to the link of $\cE$, where $S^{2N-1}$ denotes 
the unit sphere about the origin in $V$.  \par 
To describe the cohomology of the link we use the following notation. For 
a compact connected orientable manifold $X$, we introduce the graded 
vector space 
$\widetilde{H^*(X; \bk)}\left[ r\right]$ which will denote the vector space 
$H^*(X; \bk)$, truncated at the top degree and shifted upward by degree 
$r$. 
Then we can summarize the topology of the exceptional orbit variety 
$\cE$ by the following. 
\begin{Proposition}
\label{Prop1.8}
Suppose $\rho : G \to \GL(V)$ is a representation which belongs to the 
special class of prehomogeneous spaces.  
\begin{itemize}
\item[i)] {\em Cohomology of Milnor fiber $F$:  }  \par
If the  global Milnor fibration is cohomologically trivial, then
$$   H^*(F; \bk) \,\, \simeq   H^*(E; \bk)/ \left(\bk\langle s_1 \rangle 
\smile H^*(E; \bk)\right)  $$
where as graded vector spaces, 
$$   H^*(E; \bk) \,\, \simeq   H^*(F; \bk) \,\, \oplus \,\, \left(\bk\langle 
s_1 \rangle \otimes H^*(F; \bk)\right) \, ; $$
\item[ii)]  {\em Cohomology of the Complement $V \backslash \cE$:  }
$$   H^*(V \backslash \cE; \bk) \,\, \simeq   H^*(K/L; \bk) \, ; $$
\item[iii)]  {\em Cohomology of the Link $L(\cE)$: } \par
If $K/L$ is orientable, then 
$$   \widetilde{H}^*(L(\cE); \bk) \,\, \simeq   \widetilde{H^*(K/L; 
\bk)}\left[ 2N - 2 - \dim_{\R} K/L\right] \, ; $$
\end{itemize}
\end{Proposition}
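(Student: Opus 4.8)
The plan is to handle the three parts in increasing order of difficulty, leaning on Lemma~\ref{Lem1.2} and Proposition~\ref{Prop1.5}, and to reserve the real work for the link in part iii). Part ii) is immediate: Lemma~\ref{Lem1.2} already gives a homotopy equivalence $V\backslash\cE\simeq K/L$, whence $H^*(V\backslash\cE;\bk)\simeq H^*(K/L;\bk)$ with no further argument.

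For part i), the graded vector space splitting $H^*(E;\bk)\simeq H^*(F;\bk)\oplus(\bk\langle s_1\rangle\otimes H^*(F;\bk))$ is precisely (\ref{Eqn1.5}) of Proposition~\ref{Prop1.5}, which applies since the global Milnor fibration is assumed cohomologically trivial. To upgrade this to the quotient description, I would analyze the restriction map $\iti^*:H^*(E;\bk)\to H^*(F;\bk)$. The short exact sequences (\ref{CD1.5}), obtained from the Wang sequence under cohomological triviality, show $\iti^*$ is surjective in each degree, with $\ker\iti^*$ in degree $q+1$ of dimension $\dim_{\bk}H^q(F;\bk)$. It then suffices to identify this kernel with $\bk\langle s_1\rangle\smile H^*(E;\bk)$. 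Since $s_1$ restricts to $0$ on $F$ (as noted before Example~\ref{Ex1.3}), we have $\iti^*(s_1\smile x)=0$, so $\bk\langle s_1\rangle\smile H^*(E;\bk)\subseteq\ker\iti^*$; and since $s_1\smile s_1=0$, the $\gL^*\bk\langle s_1\rangle$-module isomorphism of Proposition~\ref{Prop1.5} shows multiplication by $s_1$ carries $H^*(F;\bk)$ isomorphically onto the summand $\bk\langle s_1\rangle\otimes H^*(F;\bk)$ and annihilates that summand, so $\bk\langle s_1\rangle\smile H^*(E;\bk)=\bk\langle s_1\rangle\otimes H^*(F;\bk)$. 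As the two spaces have equal dimension in each degree, they coincide, giving $H^*(F;\bk)\simeq H^*(E;\bk)/(\bk\langle s_1\rangle\smile H^*(E;\bk))$.

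Part iii) is the heart of the matter, and I would prove it by combining Alexander duality in the sphere with Poincar\'e duality on $K/L$. Because $f$ is homogeneous, $\cE$ is a cone, so the restricted $\R_+$-action radially retracts $V\backslash\cE$ onto $S^{2N-1}\backslash L(\cE)$; with part ii) this yields $S^{2N-1}\backslash L(\cE)\simeq K/L$. The link $L(\cE)$ is a compact (triangulable, semialgebraic) subset of $S^{2N-1}$, so Alexander duality applies and, over the field $\bk$, gives
\begin{equation*}
\dim_{\bk}\widetilde{H}^i(L(\cE);\bk)=\dim_{\bk}\widetilde{H}^{2N-2-i}(S^{2N-1}\backslash L(\cE);\bk)=\dim_{\bk}\widetilde{H}^{2N-2-i}(K/L;\bk).
\end{equation*}
Writing $d=\dim_{\R}K/L$ and $r=2N-2-d$, I would then invoke Poincar\'e duality on the orientable compact manifold $K/L$, namely $\dim_{\bk}H^{2N-2-i}(K/L;\bk)=\dim_{\bk}H^{i-r}(K/L;\bk)$, and verify the match degree by degree. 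The decisive point is that passing from unreduced to reduced cohomology removes exactly $H^0(K/L;\bk)$ (here using that $K$, hence $K/L$, is connected, as established in the proof of Lemma~\ref{Lem1.2}), and that this class corresponds under the duality to the top class $H^d(K/L;\bk)$; this is precisely what produces the truncation at the top degree in $\widetilde{H^*(K/L;\bk)}[r]$, while the $r$-shift records the Poincar\'e reindexing, and all contributions outside $r\le i\le 2N-3$ vanish. The main obstacle is exactly this index bookkeeping: one must line up the reduced/unreduced conventions in Alexander duality, the Poincar\'e reindexing on $K/L$, and the definition of $\widetilde{\cdot}\,[r]$ so that connectedness of $K/L$ matches removal of the top class; the duality isomorphisms themselves are standard once one records that $L(\cE)$ is triangulable, and the care lies entirely in making every degree match.
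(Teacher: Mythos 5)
Your proposal is correct and takes essentially the same route as the paper's proof: part ii) is quoted from Lemma \ref{Lem1.2}, part i) follows from Proposition \ref{Prop1.5} (your explicit identification of $\ker \iti^*$ with the ideal $\bk\langle s_1\rangle \smile H^*(E;\bk)$ via the free $\gL^*\bk\langle s_1\rangle$-module structure fills in details the paper leaves implicit), and part iii) proceeds by the same $\R_+$-contraction identifying $S^{2N-1}\backslash L(\cE) \simeq V\backslash\cE \simeq K/L$, Alexander duality, and Poincar\'e duality on the orientable compact manifold $K/L$, with the same degree bookkeeping matching the truncation of the top class to the vanishing of $\widetilde{H}^0(K/L;\bk)$. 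The only point the paper handles that you pass over is the degenerate range where your claim that all contributions outside $r \le i \le 2N-3$ vanish needs $2N-2 > \dim_{\R} K/L$: for $N \le 2$ (the cases $Sym_1$, $M_{1,1}$, $Sk_2$, where the link is empty) the statement must be read as an assertion about $\widetilde{H}^j(L(\cE);\bk)$ for $j \ge 0$, a minor caveat the paper disposes of explicitly.
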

\begin{proof}
\par
We have already discussed i) in Proposition \ref{Prop1.5}, and ii) in 
Lemma \ref{Lem1.2}.  \par 
For the link, as $\cE$ is homogeneous (of degree $n$), the complement $V 
\backslash \cE$ is also invariant under the usual $\C^*$ action.  Thus, we 
may contract using the $\R_+$-action to conclude that 
$\cE \backslash \{ 0\}$ is homotopy equivalent to the link $L(\cE)$ and 
$V \backslash \cE$ is homotopy equivalent to the complement of the link 
$S^{2N-1} \backslash L(\cE)$.  Thus, 
\begin{equation}
\label{Eqn1.9}
 H^*(S^{2N-1} \backslash L(\cE); \bk) \,\, = \,\, H^*(V \backslash \cE; \bk) 
\, . 
\end{equation}
Second, we apply Alexander duality for subspaces of spheres (see e.g. 
\cite[Chap. XIV, Thm 6.6]{Ma}) to conclude for all $j$
\begin{equation}
\label{Eqn1.10}
  \widetilde{H}^j(L(\cE); \bk) \,\, \simeq \,\, \widetilde{H}_{2N-2-
j}(S^{2N-1} \backslash L(\cE); \bk) \, . 
\end{equation}
Hence, combined with (\ref{Eqn1.9}), we obtain for all $j$
\begin{equation}
\label{Eqn1.11}
\widetilde{H}^j(L(\cE); \bk)\,\, \simeq \,\, \widetilde{H}_{2N-2-j}(V 
\backslash \cE; \bk) \, . 
\end{equation}
By Lemma \ref{Lem1.2} 
\begin{equation}
\label{Eqn1.12}
H^{*}(V \backslash \cE; \bk) \,\, \simeq \,\, H^{*}(K/L; \bk)
\end{equation}
If $q = \dim_{\R} K/L$, then, as $K/L$ is compact and orientable, by 
Poincar\'{e} duality $H^{q-r}(K/L; \bk) \simeq H^{r}(K/L; \bk)$ for all $0 
\leq r \leq q$.  
Thus, for $j = 2N-2-(q-r)$, we obtain from (\ref{Eqn1.11})  for $0 < r < q$
\begin{equation}
\label{Eqn1.13}
\widetilde{H}^{2N-2-q+r}(L(\cE); \bk) \,\, \simeq \,\, \widetilde{H}_{q-
r}(K/L; \bk) \,\, \simeq \,\, \widetilde{H}^{q-r}(K/L; \bk)  \,\, \simeq \,\, 
\widetilde{H}^{r}(K/L; \bk)  \, .
\end{equation}
If $r = q$ we obtain
$$ \widetilde{H}^{2N-2)}(L(\cE); \bk) \,\, \simeq \,\, 
\widetilde{H}^{0}(K/L; \bk) \,\, = \,\,  0\, .$$
This last equation also follows as $\dim_{\R}L(\cE) = 2N-3$.  If  $j = 0$ in 
(\ref{Eqn1.11}) and $N > 2$ then as $2N-2 > N = \dim_{\R}K/L$, 
$$ \widetilde{H}^{0}(L(\cE); \bk) \,\, \simeq \,\, \widetilde{H}^{2N-2}(K/L; 
\bk) \,\, = \,\,  0\, .$$
Combining (\ref{Eqn1.13}) and the comment following it with 
(\ref{Eqn1.12}), yields the result if $N > 2$.  \par 
The only cases when $N \leq 2$ are $Sym_1$, $M_{1, 1}$, and $Sk_2$, each 
of which is dimension $1$ with $\GL_1(\C) = C^*$, $K = S^1$, $H = \{ 1\}$, 
and $\cE = \{ 0\}$.  In each case the Milnor fiber is a single point, the link 
is empty, and the complement is homotopy equivalent to $S^1$.  Then, iii) 
has to be understood as correct for $\widetilde{H}^j(L(\cE); \bk)$ for $j 
\geq 0$ and then the result is true.  
\end{proof}
\par
Then, we will use these results to determine when the global Milnor 
fibrations arising from two classes of exceptional orbit hypersurfaces are 
cohomologically trivial, and show that the cohomology $H^*(K/L; \bk)$ is 
either an exterior algebra or a module over an exterior algebra. \par 

\section{Modeling Global Milnor Fibers for Exceptional Orbit 
Hypersurfaces}  
\label{S:sec2}
\par
Let $\rho : G \to GL(V)$ be a representation of a connected linear algebraic 
group $G$ defining a prehomogeneous space with exceptional orbit variety 
$\cE$ a hypersurface.  We will identify a subgroup of $G$ which defines a 
model complex geometry on the global Milnor fiber $F$.  Using this we 
relate the corresponding maximal connected compact subgroups via an 
exact sequence that gives a fibration which we show is always 
cohomologically trivial.  
\par
For now we consider the general case with $\cE$ a hypersurface with 
reduced homogenous defining equation $h = 0$.  We first define a 
representation $\chi$ of $G$ on $\C\langle h \rangle$ as follows. If $g \in 
G$, then we let  $g^*h(v) = h(g^{-1}\cdot v)$ for $v \in V$.  As the action of 
$g^{-1}$ preserves the orbits of $G$, $g^*h$ still vanishes on $\cE$.  Thus, 
by the Nullstellensatz, $g^*h$ is a multiple of $h$.  Also, as the action of 
$g$ is given by a linear transformation of $V$, $g^*h$ is again a 
polynomial of the same degree as $h$; hence, $g^*h = c_g\cdot h$, for $c_g 
\in \C$.  This defines a representation  $\chi_0 : G \to \GL(\C\langle h 
\rangle) \simeq \C^*$.  We let $G^{\prime} = \ker(\chi_0)$ and 
$G^{\prime}_0$ denote the connected component of the identity of 
$G^{\prime}$.  We begin with a simple Lemma.

\begin{Lemma}
\label{Lem1b.0}
In the preceding situation, $\chi_0$ is non-trivial and induces by a lifting 
an exact sequence
\begin{equation}
\label{CD1b.1}
\begin{CD} 
 {1} @>>> {G^{\prime}_0} @>>> {G} @>{\chi}>> {\C^*} @>>> {1} \, 
\end{CD}
\end{equation}
where $G^{\prime}$ and $G^{\prime}_0$ are linear algebraic groups with 
$\dim_{\C} G^{\prime}_0 = \dim_{\C} G - 1$ and $\rank (G^{\prime}_0) = 
\rank (G) - 1$.  Moreover, if $G$ is reductive, respectively solvable, then 
so is $G^{\prime}_0$ reductive, respectively solvable. 
\end{Lemma}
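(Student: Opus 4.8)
The plan is to show first that $\chi_0$ is surjective, then to pass to a character whose kernel is exactly the connected group $G^{\prime}_0$, and finally to read off the dimension, rank, and structural assertions from the resulting exact sequence. For non-triviality of $\chi_0$, I would argue by contradiction: the defining relation $h(g^{-1}\cdot v) = \chi_0(g)\, h(v)$ shows that if $\chi_0$ were trivial then $h$ would be constant on each $G$-orbit, hence constant on the dense open orbit $\cU$, where it takes the nonzero value $h(v_0)$; by continuity $h$ would then be the nonzero constant $h(v_0)$ on all of $V$, contradicting $h\equiv 0$ on the nonempty hypersurface $\cE$. Since $G$ is connected, the image $\chi_0(G)$ is a closed connected subgroup of $\C^{*}=\mbG_m$, and the only such subgroups are $\{1\}$ and $\C^{*}$; hence $\chi_0$ is surjective and $\dim_{\C}\ker\chi_0 = \dim_{\C}G - 1$.

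Next I would produce the sequence with connected kernel. Set $G^{\prime}=\ker\chi_0$ with identity component $G^{\prime}_0$; as $G^{\prime}_0$ is characteristic in $G^{\prime}$ and $G^{\prime}\triangleleft G$, it follows that $G^{\prime}_0\triangleleft G$ and $\dim_{\C}G^{\prime}_0 = \dim_{\C}G^{\prime} = \dim_{\C}G - 1$. The quotient $Q = G/G^{\prime}_0$ is then a connected linear algebraic group of dimension one admitting, via $\chi_0$, a surjection onto $G/G^{\prime}\cong\C^{*}=\mbG_m$ with finite kernel $G^{\prime}/G^{\prime}_0$. Since a connected one-dimensional linear algebraic group is isomorphic to either $\mbG_a$ or $\mbG_m$, and there is no nontrivial homomorphism $\mbG_a\to\mbG_m$ in characteristic zero, I conclude $Q\cong\mbG_m=\C^{*}$. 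Taking $\chi\colon G\to Q\cong\C^{*}$ to be the quotient map yields the exact sequence $1\to G^{\prime}_0\to G\overset{\chi}\to\C^{*}\to 1$; this realizes the ``lifting'' of $\chi_0$ through the finite isogeny $Q\to G/G^{\prime}$.

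For the ranks I would invoke the standard fact (see e.g.\ Borel \cite{Bo2}) that a surjective morphism of connected linear algebraic groups carries maximal tori onto maximal tori. Let $T$ be a maximal torus of $G$. Then $\chi(T)=\C^{*}$, so $(T\cap G^{\prime}_0)^{\circ}=\ker(\chi|_T)^{\circ}$ is a subtorus of $G^{\prime}_0$ of dimension $\rank G - 1$, giving $\rank G^{\prime}_0 \geq \rank G - 1$. Conversely, were $\rank G^{\prime}_0 = \rank G$, then $G^{\prime}_0$ would contain a maximal torus $S$ of $G$, whence $\chi(S)$ would be a maximal torus of $\C^{*}$, i.e.\ all of $\C^{*}$, contradicting $S\subset G^{\prime}_0=\ker\chi$. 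Thus $\rank G^{\prime}_0 = \rank G - 1$. For the inheritance of structure: solvability passes to the subgroup $G^{\prime}_0$ automatically; and if $G$ is reductive, the unipotent radical $R_u(G^{\prime}_0)$ is characteristic in $G^{\prime}_0$, hence normalized by $G$ (as $G^{\prime}_0\triangleleft G$), so it is a connected normal unipotent subgroup of $G$ and therefore lies in $R_u(G)=\{1\}$, showing $G^{\prime}_0$ is reductive.

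I expect the one genuinely structural point — rather than routine bookkeeping — to be arranging for the kernel in the exact sequence to be the \emph{connected} group $G^{\prime}_0$ instead of the possibly disconnected $\ker\chi_0$. This is precisely where the classification of one-dimensional connected algebraic groups and the absence of nontrivial maps $\mbG_a\to\mbG_m$ enter, and it is also what makes the clean rank and reductivity arguments above go through; everything else reduces to the surjectivity of $\chi_0$ and standard facts about maximal tori and unipotent radicals.
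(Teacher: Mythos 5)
Your proposal is correct, and at three of its four steps it takes a genuinely different route from the paper. For non-triviality of $\chi_0$ the two arguments are essentially the same idea (triviality would force $h$ to be constant on orbits); the paper concludes there is no open orbit, while you conclude $h$ is a nonzero constant on the dense orbit $\cU$ and hence on $V$ — both valid. Where you diverge: (a) to get the exact sequence with \emph{connected} kernel, the paper argues topologically that $G/G^{\prime}_0 \to G/G^{\prime} \simeq \C^*$ is a finite covering map and a finite cover of $\C^*$ is again $\C^*$, whereas you argue algebraically via the classification of one-dimensional connected linear algebraic groups and the absence of nontrivial homomorphisms $\mbG_a \to \mbG_m$; (b) for the rank, the paper asserts that every element of $G$ is conjugate into a maximal torus $T$ (which as stated is false for unipotent elements, though the paper's conclusion is easily repaired since characters kill unipotents), while you instead invoke the standard fact that a surjective morphism of connected linear algebraic groups carries maximal tori onto maximal tori — a cleaner argument, though note this fact is found in Borel's \emph{Linear Algebraic Groups} rather than the compact-subgroups reference you cite; (c) for reductivity, the paper passes to maximal compact subgroups and real forms, citing Goodman--Wallach, whereas you observe that $R_u(G^{\prime}_0)$ is characteristic in the normal subgroup $G^{\prime}_0$, hence a connected normal unipotent subgroup of $G$, hence contained in $R_u(G) = \{1\}$. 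Your route is purely algebraic, self-contained, and would work over any algebraically closed field; the paper's route has the advantage of setting up exactly the compact-subgroup machinery (the diagram relating $K^{\prime}_0$, $K$, $S^1$) that it reuses immediately afterward in Lemmas 2.2--2.4. You also correctly flag, and correctly handle, the one point needing care — that $G^{\prime}_0 \triangleleft G$ so that the quotient $G/G^{\prime}_0$ exists as an algebraic group.
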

\begin{proof} \par
First, if $\chi_0$ were trivial, then  $G$ acts trivially on $h$; hence, it 
will preserve the fibers $h^{-1}(w)$ for any $w \in \C$.  This says that the 
orbits of $G$ are all of positive codimension, so there is no open orbit, a 
contradiction.  Thus, $\chi_0$ is surjective and $G/G^{\prime} \simeq 
\C^*$. \par
Since $G^{\prime}$ is a closed subgroup of $G$, it is an algebraic subgroup, 
as is the subgroup $G_0^{\prime}$.  Also, $G^{\prime}$ and $G^{\prime}_0$ 
have the same dimension and rank.  From $G/G^{\prime} \simeq \C^*$ we 
see $\dim_{\C} G^{\prime}_0 = \dim_{\C} G^{\prime} =\dim_{\C} G - 1$.  
\par 
 As $G^{\prime}$ is algebraic, it has only finitely many connected 
components.  Thus, the quotient map $G/G^{\prime}_0 \to G/G^{\prime} 
\simeq \C^*$ is a finite covering map of $\C^*$ and hence $G/G^{\prime}_0 
\simeq \C^*$.  Thus, we have the exact sequence given in (\ref{CD1b.1}), 
where $\chi$ denotes the lifting of $\chi_0$ for the covering map.  \par
Next, if $T$ denotes a maximal algebraic torus of $G$ of dimension $k = 
\rank(G)$, then $\chi_0 | T$ is still onto $\C^*$, otherwise its image 
would be $\{ 1\}$.  Since every element in $G$ is conjugate to an element 
of $T$, $\chi$ would be trivial on $G$, which as we just saw is 
impossible.  Thus, $\ker(\chi_0 | T)$ is a torus of dimension $k - 1$. Thus, 
$\rank(G^{\prime}) \geq k-1$.  If $\rank(G^{\prime}) = k$, then there is a 
maximal algebraic torus $T^{\prime} \subset G^{\prime}$ of rank $k$.  
Then, $T^{\prime}$ is also a maximal algebraic torus of $G$ and $\chi_0 | 
T^{\prime}$ is trivial, a contradiction.  Thus, $\rank(G^{\prime}) = k - 1$. 
\par
If $G$ is solvable, then so is its subgroup $G^{\prime}_0$.  If $G$ is 
reductive, and $K$ is the maximal compact subgroup with Lie algebra $\k$, 
then $\g = \k + i \k$.  If $\tilde \chi : \g \to \C$ is the Lie algebra 
homomorphism associated to $\chi$, then as it is $\C$ linear, it is the 
complexification of $\tilde \chi | \k$.  Hence, the Lie algebra of 
$G^{\prime}$, $\g^{\prime} = \ker(\tilde \chi)$ is the complexification of 
$\ker(\tilde \chi) = \k^{\prime}$, the Lie algebra of $K^{\prime}$.  Thus, if 
$G^{\prime}_0$ and $K^{\prime}_0$ denote the corresponding connected 
components of the identity, then as $K^{\prime}_0$ is compact, and 
$\k^{\prime}$ a real form for $\g^{\prime}$, then by \cite[Thm 2.4.7, Prop. 
2.4.2]{GW}, $G^{\prime}_0$ and then $G^{\prime}$ are reductive. \par
This completes the proof.

\end{proof}
\par  
Second, we consider the associated exact sequence of maximal compact 
subgroups.  For the maximal compact subgroup $K \subset G$, $\chi(K)$ is 
a connected compact subgroup of $\C^*$, and hence is either $\{ 1\}$ or 
$S^1$.  Again if $\chi(K) = \{ 1\}$, let $T_0$ be a maximal torus of $K$ 
contained in a maximal algebraic torus $T$ of $G$ and homotopy 
equivalent to it.  Since $\chi_0 | T_0$ is trivial, it follows that $\chi | T$ 
is trivial, which is a contradiction.  
\par 
We let $\chi^{\prime} = \chi | K$ and $K^{\prime} = \ker(\chi^{\prime})$, 
$G^{\prime} \cap K$.   We also let $K^{\prime}_0$ be the connected 
component of the identity of $K^{\prime}$, so $K^{\prime}_0 \subset 
G^{\prime}_0$.  \par
Then, we have the following diagram with rows exact, and the vertical 
arrows are inclusions.  
\begin{equation}
\label{CD1b.3}
\begin{CD} 
{1} @>>>  {G^{\prime}_0} @>>> {G} @>{\chi}>> {\C^*}  @>>> {1}  \\
@.   @A{\iti^{\,\prime}}AA     @A{\iti}AA      @A{\iti_0}AA  @. \\
{1} @>>>  {K^{\prime}_0} @>>>  {K}  @>{\chi^{\prime}}>>  {S^1} @>>> {1} 
\end{CD}  
\end{equation}
We remark that the bottom row is exact, because the covering 
$K/K^{\prime}_0 \to K/K^{\prime} \simeq S^1$ is the restriction of the 
covering for $G/G^{\prime}_0 \to G/G^{\prime}$. 
\par
There are three things that we want to show in this situation:
\begin{itemize}
\item[i)]  $\iti^{\,\prime}$ is a homotopy equivalence;
\item[ii)] $G^{\prime}_0$ acts transitively on the global Milnor fiber $F$; 
and 
\item[iii)] the fibration $K^{\prime}_0 \hookrightarrow K \to S^1$ is 
cohomologically trivial.
\end{itemize}

\begin{Lemma}
\label{Lem1b.2}
In the above situation in the diagram (\ref{CD1b.3}), $\iti^{\,\prime}$ is a 
homotopy equivalence.  
\end{Lemma}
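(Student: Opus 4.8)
The plan is to exploit the map of fibrations furnished by the commutative diagram (\ref{CD1b.3}), together with the fact that the two outer vertical inclusions are already known to be homotopy equivalences, and to conclude via the five lemma and Whitehead's theorem exactly as in the proof of Lemma \ref{Lem1.2}.

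First I would observe that each exact row of (\ref{CD1b.3}) is a short exact sequence of Lie groups and therefore realizes its kernel as the fiber of a locally trivial fibration: the top row gives $G^{\prime}_0 \hookrightarrow G \overset{\chi}{\to} \C^*$ and the bottom row gives $K^{\prime}_0 \hookrightarrow K \overset{\chi^{\prime}}{\to} S^1$ (the bottom row being exact as remarked following (\ref{CD1b.3})). The vertical inclusions $\iti$, $\iti_0$, $\iti^{\,\prime}$ are group homomorphisms commuting with the structure maps, so they induce a commutative ladder between the long exact homotopy sequences of these two fibrations.

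Next I would invoke the basic fact (Borel \cite{Bo2}) that the maximal compact subgroups $K \subset G$ and $S^1 \subset \C^*$ are strong deformation retracts, so that $\iti$ and $\iti_0$ induce isomorphisms $\iti_* : \pi_j(K) \simeq \pi_j(G)$ and $\iti_{0\,*} : \pi_j(S^1) \simeq \pi_j(\C^*)$ for all $j \geq 0$. Placing the term $\pi_j(K^{\prime}_0)$ in the middle of the ladder, its four neighbouring terms are $\pi_{j+1}(K)$, $\pi_{j+1}(S^1)$, $\pi_j(K)$, $\pi_j(S^1)$, on all of which the vertical maps are isomorphisms by the previous step. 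The five lemma then yields that $\iti^{\,\prime}_* : \pi_j(K^{\prime}_0) \to \pi_j(G^{\prime}_0)$ is an isomorphism for every $j \geq 1$; for $j = 0$ both groups are trivial, since $K^{\prime}_0$ and $G^{\prime}_0$ are the identity components. Finally, as both spaces are path-connected CW complexes and $\iti^{\,\prime}$ induces isomorphisms on all homotopy groups, Whitehead's theorem gives that $\iti^{\,\prime}$ is a homotopy equivalence.

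The only points requiring care, and the nearest thing to an obstacle, are the low-degree and possibly nonabelian ends of the homotopy ladder: one must check that the five lemma applies at $\pi_1$ (where the groups need not be abelian) and that base point issues at $\pi_0$ cause no trouble. Both are handled exactly as in Lemma \ref{Lem1.2}, since connectedness of the identity components kills $\pi_0$ and all four outer vertical arrows are isomorphisms. As an alternative, essentially equivalent route, one could instead argue that $K^{\prime}_0$ is itself a maximal compact subgroup of $G^{\prime}_0$, using $\rank(G^{\prime}_0) = \rank(G) - 1$ and the reductive/solvable structure established in Lemma \ref{Lem1b.0}, and then appeal directly to the deformation-retract property of maximal compact subgroups; but the five-lemma argument above is more direct and reuses the machinery already in place.
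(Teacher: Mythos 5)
Your proof is correct and takes essentially the same route as the paper: both compare the long exact homotopy sequences of the fibrations $G^{\prime}_0 \hookrightarrow G \to \C^*$ and $K^{\prime}_0 \hookrightarrow K \to S^1$, use that $\iti$ and $\iti_0$ are homotopy equivalences of the total spaces and bases, and conclude via the five lemma and Whitehead's theorem. Your explicit attention to the $\pi_0$ and nonabelian $\pi_1$ ends of the ladder only spells out what the paper's proof leaves implicit.
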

\begin{proof}
Each of the rows of (\ref{CD1b.3}) gives fibrations:  $G^{\prime}_0 
\hookrightarrow G \to \C^*$ and $K^{\prime}_0 \hookrightarrow K \to 
S^1$. Since $K$ is the maximal compact subgroup of $G$, $\iti : K 
\hookrightarrow G$ is a homotopy equivalence, as is the inclusion $\iti_0 
: S^1 \hookrightarrow \C^*$. Then, we may consider the induced maps for 
the long exact sequences in homotopy for the two fibrations, and again by 
the 5-lemma, since both $\iti$ and $\iti_0$ are homotopy equivalences, it 
follows that $\iti^{\,\prime}_*: \pi_j(K^{\prime}_0) \simeq 
\pi_j(G^{\prime}_0)$ for all $j \geq 0$.  Since they are both 
path-connected CW-complexes, it follows by Whitehead\rq s theorem that 
$\iti^{\,\prime}$ is a homotopy equivalence.  
\end{proof}
\par
Next, as $G^{\prime}_0$ acts trivially on $h$, it acts on the fibers of $h$, 
and in particular on the global Milnor fiber $F$.  Let $H^{\prime}$ be the 
isotropy subgroup of $G^{\prime}_0$ for a point $v_0$ in $F$, and let 
$L^{\prime} \subset K^{\prime}_0$ be the maximal compact subgroup of 
$H^{\prime}$.  
\par
\begin{Lemma}
\label{Lem1b.3}
In the preceding situation, $G^{\prime}_0$ acts transitively on the global 
Milnor fiber and $K^{\prime}_0/L^{\prime} \subset F$ is a deformation 
retract.  In the equidimensional case, $H^{\prime}$ is finite and hence 
equals $L^{\prime}$  and $G^{\prime}_0$ is a finite regular covering space 
of $F$ with group of covering transformations $H^{\prime}$.  
\end{Lemma}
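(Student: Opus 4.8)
The plan is to exploit that $\cE = h^{-1}(0)$, so the global Milnor fiber $F = h^{-1}(1)$ lies inside the open orbit $\cU = V \backslash \cE$, on which $G$ acts transitively, and $F$ is a smooth affine hypersurface of complex dimension $\dim_{\C} V - 1$. First I would record the transformation rule for $h$ under $G$: writing the character of Lemma~\ref{Lem1b.0} as $\chi_0$, the defining relation $g^* h = \chi_0(g)\, h$ gives $h(g\cdot v) = \chi_0(g)^{-1} h(v)$ for all $g\in G$ and $v\in V$. Two consequences follow. For any $v_1 \in F$ and any $g$ in the isotropy subgroup $\mathrm{Stab}_G(v_1)$, evaluating at $v_1$ forces $\chi_0(g) = 1$, so every such isotropy subgroup lies in $G^{\prime} = \ker\chi_0$. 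Likewise, given $v_0, v_1 \in F$ and $g \in G$ with $g\cdot v_0 = v_1$ (which exists since $G$ is transitive on $\cU$), comparing values of $h$ at the two points gives $\chi_0(g) = 1$, i.e. $g \in G^{\prime}$. Thus $G^{\prime}$ already acts transitively on $F$.

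The main work, and the main obstacle, is upgrading this to transitivity of the identity component $G^{\prime}_0$, since $G^{\prime}$ may be disconnected. Here I would argue by dimensions. The open orbit has $\dim_{\C}\cU = \dim_{\C}V$, so the isotropy subgroup of any point of $\cU$ has dimension $\dim_{\C} G - \dim_{\C} V$; this applies to each $v_1 \in F$, as all such points lie in the single orbit $\cU$. Because $G^{\prime}$ is algebraic it has finitely many components, so $\mathrm{Stab}_G(v_1)\cap G^{\prime}_0$ has finite index in $\mathrm{Stab}_G(v_1)$ (using $\mathrm{Stab}_G(v_1)\subset G^{\prime}$ from the previous step), hence the same dimension. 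Combining with $\dim_{\C}G^{\prime}_0 = \dim_{\C}G - 1$ from Lemma~\ref{Lem1b.0}, every $G^{\prime}_0$-orbit in $F$ has complex dimension $(\dim_{\C}G-1) - (\dim_{\C}G - \dim_{\C}V) = \dim_{\C}V - 1 = \dim_{\C}F$. As $F$ is a smooth manifold of pure dimension $\dim_{\C}V-1$, each orbit is therefore open in $F$; these orbits partition $F$ into disjoint open, hence also closed, subsets, and $F$ is connected by Kato--Masumoto \cite{KM}. Hence there is a single orbit and $G^{\prime}_0$ acts transitively, giving $F \cong G^{\prime}_0/H^{\prime}$. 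From Lemma~\ref{Lem1b.2}, $K^{\prime}_0$ is compact and homotopy equivalent to $G^{\prime}_0$, hence is a maximal compact subgroup; with $L^{\prime} \subset K^{\prime}_0$ the maximal compact subgroup of $H^{\prime}$, Mostow's deformation-retraction theorem for homogeneous spaces of Lie groups with finitely many components then yields that $K^{\prime}_0/L^{\prime} \hookrightarrow G^{\prime}_0/H^{\prime} = F$ is a deformation retract.

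Finally, for the equidimensional case $\dim_{\C}V = \dim_{\C}G$, the isotropy dimension computed above is $0$, so $H = \mathrm{Stab}_G(v_0)$, and with it $H^{\prime} = G^{\prime}_0 \cap H$, is finite; a finite group is its own maximal compact subgroup, so $L^{\prime} = H^{\prime}$. The orbit map $G^{\prime}_0 \to F$, $g \mapsto g\cdot v_0$, identifies $F$ with $G^{\prime}_0/H^{\prime}$, and right translation by the finite subgroup $H^{\prime}$ is a free, properly discontinuous action permuting the fibers of this map simply transitively. Hence $G^{\prime}_0 \to F$ is a finite regular (Galois) covering whose group of deck transformations is $H^{\prime}$, as claimed.
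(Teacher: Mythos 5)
Your proof is correct, and it reaches the paper's conclusions by a genuinely different mechanism at the two key steps. For transitivity of $G^{\prime}_0$, the paper argues infinitesimally: the orbit map $\Phi : G \to V$, $g \mapsto g\cdot v_0$, is a submersion at $Id$ because $v_0$ lies in the open orbit, so $d\Phi(Id)\,|\,T_{Id}G^{\prime}_0$ has rank at least $N-1$; since $G^{\prime}_0$ preserves the fibers of $h$, the $G^{\prime}_0$-orbit of $v_0$ is open in $F$. You instead first extract from the transformation rule $h(g\cdot v) = \chi_0(g)^{-1}h(v)$ that $F$ is a single $G^{\prime}$-orbit and that every stabilizer $\mathrm{Stab}_G(v_1)$ of a point $v_1 \in F$ lies in $G^{\prime} = \ker\chi_0$, and then run an orbit--stabilizer dimension count: $\dim_{\C}\mathrm{Stab}_G(v_1) = \dim_{\C}G - N$, unchanged upon intersecting with the finite-index subgroup $G^{\prime}_0 \subset G^{\prime}$, so every $G^{\prime}_0$-orbit in $F$ has dimension $N-1$ and is open. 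Both arguments then finish identically: the open orbits partition the connected (Kato--Matsumoto) manifold $F$, so there is only one. Your route is slightly longer but buys two things: the containment of all isotropy groups of points of $F$ in $\ker\chi_0$ is independently useful, and the preliminary $G^{\prime}$-transitivity makes the group-theoretic structure transparent. The second real difference is the deformation-retract step: the paper asserts it \lq\lq by the same argument given in Lemma \ref{Lem1.2},\rq\rq\ but that argument (five lemma plus Whitehead) literally yields only a homotopy equivalence $K^{\prime}_0/L^{\prime} \simeq G^{\prime}_0/H^{\prime}$, whereas your appeal to Mostow's theorem on homogeneous spaces actually delivers the deformation retraction as stated, so on this point your proof is the more precise one; the only detail worth one extra line is your inference that $K^{\prime}_0$ is \emph{maximal} compact in $G^{\prime}_0$ from the homotopy equivalence of Lemma \ref{Lem1b.2} (if $K^{\prime}_0$ sat properly inside a maximal compact $\tilde K$ of the connected group $G^{\prime}_0$, comparing top homology of the compact manifolds $K^{\prime}_0$ and $\tilde K$ rules this out). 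Your equidimensional covering-space argument matches the paper's, with the explicit identification of the deck group as $H^{\prime}$ acting by right translations filling in a detail the paper leaves implicit.
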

\begin{proof}
\par
By Lemma \ref{Lem1b.0}, $\dim_{\C} G^{\prime}_0 = \dim_{\C} G - 1$ and 
it acts on the global Milnor fiber $F$ of $\dim_{\C} F = N - 1$, where 
$\dim_{\C} V = N$.  If $v_0 \in F$, then as $v_0 \in \cU$, the orbit map 
$\Phi : G \to V$ sending $g \mapsto g\cdot v_0$ is a local submersion at 
$Id$.  Hence, if $d\Phi(Id) | T_{Id} G^{\prime}_0$ has rank $< N-1$, then 
$d\Phi(Id)$ has rank $< N$, contradicting it being a submersion.  Thus, 
$d\Phi(Id) | T_{Id} G^{\prime}_0$ has rank $N-1$, and as $G^{\prime}_0$ 
preserves the fibers of $h$, $\Phi(G^{\prime}_0) \subset F$, so its image 
is a neighborhood of $v_0$.  As this is true at each point $v_0 \in F$,  The 
orbit of $v_0$ under $G^{\prime}_0$ is open in $F$.  Thus, any orbit of 
$G^{\prime}$ in $F$ is open in $F$.  As $F$ is connected (by e.g. 
Kato-Matsumoto \cite{KM}), there can only be one orbit and $G^{\prime}_0$ 
acts transitively on $F$.  Hence, if $H^{\prime}$ is the isotropy subgroup 
in $G^{\prime}_0$ of $v_0$, $F$ is diffeomorphic to 
$G^{\prime}_0/H^{\prime}$.  Also, by the same argument given in Lemma 
\ref{Lem1.2}, $K^{\prime}_0/L^{\prime} \subset G^{\prime}_0/H^{\prime}$ 
is a deformation retract, establishing the first result.  \par
In the equidimensional case, $\dim_{\C} G^{\prime}_0 = \dim_{\C} F = N - 
1$.  As they have the same dimension, the isotropy subgroup of a point is a 
zero dimensional algebraic subgroup, and hence finite.  As $G^{\prime}_0$ 
is connected and  $F \simeq G^{\prime}_0/H^{\prime}$, $G^{\prime}_0$ is a 
covering space as stated. \par
\end{proof}
\par
\begin{Remark}
\label{Rem1.2}
\par
We see now that Proposition \ref{Prop1.1} follows from Lemma 
\ref{Lem1b.3}, since $(F, G^{\prime}_0, H^{\prime})$ gives the model 
complex geometry for $F$.  For the determinantal hypersurfaces, the 
singular sets for $\cD^{sy}_m$, $\cD_m$, and $\cD^{sk}_m$ ($m$ even) 
have codimension $\geq 2$, so by Kato-Matsumoto \cite{KM} the Milnor 
fibers are simply connected, so the model is simple. \par
We also remark that Lemma \ref{Lem1b.3} gives an alternate approach to 
the topological structure of the Milnor fiber. Rather than trying to 
represent it as being homotopy equivalent to a bouquet of spheres, it gives 
a compact submanifold which is a deformation retract.
\end{Remark}
We complete this section by establishing for a fibration of connected 
compact Lie groups that cohomological triviality always hold.  
\begin{Lemma}
\label{Lem1b.4}
For an exact sequence of compact connected Lie groups 
\begin{equation}
\label{CD1b.4}
\begin{CD} 
{1} @>>>  {K^{\prime}_0} @>>>  {K}  @>{\chi^{\prime}}>>  {S^1} @>>> {1} 
\end{CD}  
\end{equation}
The associated fibration $K^{\prime}_0 \hookrightarrow K \to S^1$ is 
cohomologically trivial.
\end{Lemma}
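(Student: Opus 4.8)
The plan is to prove the stronger statement that the geometric monodromy of the fibration is homotopic to the identity, from which cohomological triviality over $\Q$ (indeed with any coefficients) follows immediately. The strategy is to exhibit the total space $K$ explicitly as a mapping torus and read off the monodromy map directly, rather than arguing abstractly from the Wang sequence.

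First I would produce a one-parameter subgroup of $K$ projecting onto the loop generating $\pi_1(S^1)$. Since $\chi^{\prime} : K \to S^1$ is a surjective homomorphism of compact connected Lie groups, its differential $d\chi^{\prime} : \k \to \mathrm{Lie}(S^1) = i\R$ is surjective, because the image of $d\chi^{\prime}$ is the Lie algebra of the connected subgroup $\chi^{\prime}(K) = S^1$. I would then choose $X \in \k$ with $d\chi^{\prime}(X) = 2\pi i$ and set $g_t = \exp(tX)$, so that $\chi^{\prime}(g_t) = e^{2\pi i t}$; in particular $g_1 = \exp(X) \in \ker \chi^{\prime} = K^{\prime}_0$, and $\{g_t\}$ is an honest one-parameter subgroup.

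Next I would trivialize the fibration using this subgroup. Define $\psi : K^{\prime}_0 \times \R \to K$ by $\psi(x,t) = x\,g_t$. Using $\chi^{\prime}(x g_t) = e^{2\pi i t}$ together with the one-parameter subgroup relation $g_t g_{t^{\prime}}^{-1} = g_{t - t^{\prime}}$, one checks that $\psi$ is surjective and that $\psi(x,t) = \psi(x^{\prime},t^{\prime})$ holds precisely when $n = t - t^{\prime} \in \Z$ and $x^{\prime} = x\,g_1^{\,n}$. Hence $\psi$ descends to an identification of $K$ with the mapping torus of the right translation $R_{g_1} : K^{\prime}_0 \to K^{\prime}_0$, $x \mapsto x g_1$, compatibly with the projections onto $S^1 = \R/\Z$. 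Transporting the fibre $F = (\chi^{\prime})^{-1}(1) = K^{\prime}_0$ once around $S^1$ along the path $t \mapsto \psi(x,t)$ and invoking the relation $\psi(x,1) = \psi(x g_1, 0)$ shows that the monodromy $\gs$ is exactly $R_{g_1}$.

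Finally, because $K^{\prime}_0$ is connected I would choose a path $\gamma$ in $K^{\prime}_0$ from the identity to $g_1$; then $(x,s) \mapsto x\,\gamma(s)$ is a homotopy from $\id_{K^{\prime}_0}$ to $R_{g_1} = \gs$. Thus $\gs$ is homotopic to the identity, so $\gs^* = \id$ on $H^*(K^{\prime}_0; \bk)$ for every field $\bk$, and the fibration is cohomologically trivial. The only delicate point is the middle step: confirming that $\psi$ really descends to a fibre-bundle identification with the mapping torus, and in particular that the monodromy is the pure translation $R_{g_1}$ and not a translation twisted by the conjugation automorphism $\mathrm{Ad}(g_1)$ that governs the group extension. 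Because the trivialization is built from the single one-parameter subgroup $\{g_t\}$, the direct computation $\psi(x,1) = \psi(x g_1, 0)$ settles this with no conjugation appearing; once this is in hand, connectedness of $K^{\prime}_0$ closes the argument at once.
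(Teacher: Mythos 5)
Your proof is correct, and it takes a genuinely different route from the paper's. The paper argues indirectly and numerically: adapting the rank computation of Lemma \ref{Lem1b.0} to compact groups gives $\rank K^{\prime}_0 = \rank K - 1$, so by the Hopf structure theorem $H^*(K; \bk)$ and $H^*(K^{\prime}_0; \bk)$ are exterior algebras on $k$, resp.\ $k-1$, odd-degree generators; hence $\dim_{\bk} H^*(K; \bk) = 2^k = 2\,\dim_{\bk} H^*(K^{\prime}_0; \bk)$, and cohomological triviality follows from the dimension criterion iii) of Proposition \ref{Prop1.5}, i.e.\ from the Wang sequence. You instead compute the monodromy itself: lifting the generator of $\pi_1(S^1)$ to the one-parameter subgroup $g_t = \exp(tX)$ with $d\chi^{\prime}(X) = 2\pi i$ exhibits $K$ as the mapping torus of the right translation $R_{g_1}$ on $K^{\prime}_0$, and your verification that no $\mathrm{Ad}(g_1)$-twist enters is sound, since the trivialization $\psi(x,t) = x\,g_t$ uses only right multiplications and $g_1 \in \ker\chi^{\prime} = K^{\prime}_0$ (connectedness of the kernel is part of the hypothesis in (\ref{CD1b.4})); connectedness of $K^{\prime}_0$ then makes $R_{g_1}$ homotopic to the identity. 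Your argument is more elementary (no Hopf theorem, no Wang sequence) and strictly stronger: it gives \emph{geometric} triviality of the monodromy of this fibration, hence cohomological triviality with arbitrary coefficients, not only over fields of characteristic $0$. Note that this does not settle the open question of Remark \ref{Rem1.1} about geometric triviality of the Milnor fibration itself, since $K^{\prime}_0 \hookrightarrow K \to S^1$ is related to the global Milnor fibration only at the level of cohomology (via the deformation retract $K^{\prime}_0/L^{\prime} \subset F$ of Lemma \ref{Lem1b.3}), not as fibrations. What the paper's route buys is economy within its framework --- Proposition \ref{Prop1.5} and the Hopf structure theorem are already in play, and the explicit exterior-algebra description of $H^*(K^{\prime}_0; \bk)$ is reused directly in the proof of Theorem \ref{Thm4.2} --- whereas your mapping-torus argument is self-contained and could replace the paper's proof verbatim.
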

\begin{proof}
First, by the same arguments as in Lemma \ref{Lem1b.0} but for compact 
groups, $\dim_{\R} K^{\prime}_0 = \dim_{\R} K - 1$ and $\rank 
K^{\prime}_0 = \rank K - 1 = k - 1$.  
\par 
Thus, by the basic Hopf structure theorem, the cohomology with 
coefficients in a field $\bk$ of characteristic $0$ of a compact connected 
Lie group of rank $k$ is a Hopf algebra which is isomorphic to an exterior 
algebra on $k$ generators of odd degrees (see e.g. Borel \cite{Bo} which 
treats the more general Leray Structure theorem for Hopf algebras, where 
$\bk$ may be a perfect field).  Thus, $H^*(K; \bk)$ is isomorphic to an 
exterior algebra on $k$ generators and $H^*(K^{\prime}_0; \bk)$ is 
isomorphic to an exterior algebra on $k-1$ generators.  Thus, $\dim_{\bk} 
H^*(K; \bk) = 2^k$, and $\dim_{\bk} H^*(K^{\prime}_0; \bk) = 2^{k-1}$.  
Hence, $\dim_{\bk} H^*(K; \bk) = 2\,\dim_{\bk} H^*(K^{\prime}_0; \bk)$, so 
by Proposition \ref{Prop1.5}   the fibration is cohomologically trivial. 
\end{proof}
\par
This last result by itself will not imply that the global Milnor fibration is 
cohomologically trivial.  In fact, we see in the next section that it fails 
for a family of determinantal hypersurfaces.  However, the preceding will 
provide the crucial pieces in \S \ref{S:sec5} to prove that for any 
prehomogeneous space defined from an equidimensional representation 
$\rho : G \to \GL(V)$ of a connected linear algebraic group $G$, the global 
Milnor fibration is cohomologically trivial. 

\section{Topology of Determinantal Hypersurfaces}  
\label{S:sec3}
\par
	We first consider the determinantal hypersurfaces which arise as 
the  exceptional orbit varieties of the representations of $\GL_m(\C)$ on  
$M = Sym_m$, $M_{m, m}$, or $Sk_m$ (for $m = 2k$), given before 
Definition \ref{Def1.1}.  We separately state the results for the topology 
of the Milnor fibers, and the complements and links. \par
\vspace{2ex}

\par
\vspace{1ex}
\begin{Thm} 
\label{Thm2.1}
The Milnor fibers of the determinantal hypersurfaces are homogeneous 
spaces homotopy equivalent to compact symmetric spaces of \lq\lq 
classical type\rq\rq \, (classified by Cartan) given in Table 
\ref{Mil.fib.sym.sp}.  The cohomology with coefficients in a field $\bk$ of 
characteristic $0$ of the Milnor fiber is either an exterior algebra, or a 
free module on two genersators over an exterior algebra, as given in Table 
\ref{Mil.fib.sym.sp}.  Also, the Milnor fibration is cohomologically trivial 
for all cases except the case of $m \times m$ symmetric matrices with 
$m$ even.  \par 
Moreover, for the regular and skew-symmetric cases the formulas remain 
true when $\bk$ is replaced by $\Z$, while for the symmetric case there 
is $2$-torsion in the Milnor fiber and the mod-$2$ cohomology of the 
Milnor fiber is given by the $\Z_2$-exterior algebra
$$   H^*(SU_m/SO_m(\R); \Z_2) \,\, = \,\, \gL^*\, \Z_2\langle s_2, s_3,  
..., s_m\rangle $$  
with $s_j$ of degree $j$.  
\end{Thm}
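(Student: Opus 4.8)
The plan is to feed each of the three representations through the apparatus of \S\ref{S:sec2}. First I would compute the character $\chi_0$ of Lemma~\ref{Lem1b.0}: writing $g^*f(A) = f(g^{-1}\cdot A)$ one finds $\chi_0(g) = \det(g)^{-1}$ for $M = M_{m,m}$ (where $f = \det$, $B\cdot A = BA$) and for $M = \Sk_m$ (where $f = \Pf$, $B\cdot A = BAB^T$, using $\Pf(BAB^T) = \det(B)\Pf(A)$), while $\chi_0(g) = \det(g)^{-2}$ for $M = \Sym_m$ (where $f = \det$, $B\cdot A = BAB^T$). In every case $G^{\prime}_0 = \ker(\chi_0)^{\circ} = \mathrm{SL}_m(\C)$. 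Computing the isotropy group $H^{\prime}$ at the base point ($I$ in the regular and symmetric cases, a standard symplectic form $J$ in the skew case) gives $H^{\prime} = \{1\}$, $SO_m(\C)$, and $Sp_{2k}(\C)$ respectively. Lemma~\ref{Lem1b.3} then identifies the global Milnor fibers as $\mathrm{SL}_m(\C)$, $\mathrm{SL}_m(\C)/SO_m(\C)$, and $\mathrm{SL}_{2k}(\C)/Sp_{2k}(\C)$, with compact deformation retracts $SU_m$, $SU_m/SO_m(\R)$ and $SU_{2k}/Sp(k)$; these are precisely Cartan's compact symmetric spaces of group type and of types AI and AII, giving the first assertion.

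For the cohomology I would invoke the classical computations (going back to \'E.~Cartan and Borel~\cite{Bo}) of the real cohomology of these spaces: $H^*(SU_m;\bk) \simeq \gL^*\bk\langle x_3, x_5, \dots, x_{2m-1}\rangle$; $H^*(SU_{2k}/Sp(k);\bk) \simeq \gL^*\bk\langle y_5, y_9, \dots, y_{4k-3}\rangle$; and $H^*(SU_m/SO_m(\R);\bk) \simeq \gL^*\bk\langle x_5, x_9, \dots, x_{2m-1}\rangle$ when $m$ is odd, while for $m$ even it is $\gL^*\bk\langle x_5, \dots, x_{2m-3}\rangle \otimes \bk[w]/(w^2)$ with $\deg w = m$, i.e.\ a free module on the two generators $1, w$ over an exterior algebra. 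This gives the exterior-algebra/free-module dichotomy.

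Cohomological triviality I would read off from the monodromy $\gs(A) = e^{2\pi i/m}A$ (resp.\ $e^{2\pi i/k}A$ for the Pfaffian). In the regular case $\gs$ is left translation of $SU_m$ by the central scalar $e^{2\pi i/m}I \in SU_m$, and in the skew case, writing $A = BJB^T$, it becomes right translation of $SU_{2k}/Sp(k)$ by the central scalar $e^{\pi i/k}I \in SU_{2k}$ (here $(e^{\pi i/k}I)^{2k}=e^{2\pi i}I=I$, so no correction is needed); since these scalars lie in the connected group the translations are homotopic to the identity, so $\gs^* = \mathrm{id}$ and Proposition~\ref{Prop1.5} applies. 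In the symmetric case, writing $A = BB^T$ and extracting a square root of the scalar forces $\det(e^{\pi i/m}B) = -1$, so one corrects by a reflection $r \in O_m(\C)$ ($\det r = -1$, $rr^T = I$), and $\gs$ becomes right translation of $SU_m/SO_m(\R)$ by $g_0 = e^{\pi i/m}r \in SU_m$. Since $g_0^2 = e^{2\pi i/m}I$ is central, $(\gs^*)^2 = \mathrm{id}$; the crux is that $\gs^*$ fixes each generator $x_{4j+1}$ but sends $w \mapsto -w$ when $m = 2k$ is even. This is because, in the fibration $SO_m(\R) \hookrightarrow SU_m \to SU_m/SO_m(\R)$, the class $w$ transgresses the degree-$(2k-1)$ Euler generator of $H^*(SO_m(\R);\bk)$ — which exists precisely for $m$ even — and the orientation-reversing factor $r$ negates that generator. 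Hence the fibration is cohomologically trivial except for $\Sym_m$ with $m$ even, matching Proposition~\ref{Prop1.8}; as a check, this is equivalent via Proposition~\ref{Prop1.5} to the dimension count $\dim_\bk H^*(U_m/O_m) = 2\dim_\bk H^*(SU_m/SO_m)$, which holds iff $m$ is odd.

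The integral and mod-$2$ statements split along the same lines. For $SU_m$ and $SU_{2k}/Sp(k)$ the classical fact that the integral cohomology is torsion-free — a free exterior algebra on the same generators — lets me replace $\bk$ by $\Z$ verbatim. For $SU_m/SO_m(\R)$ I would cite the classical mod-$2$ computation $H^*(SU_m/SO_m(\R);\Z_2) \simeq \gL^*\Z_2\langle s_2, s_3, \dots, s_m\rangle$ with $\deg s_j = j$; comparing this with the rational answer through the universal coefficient theorem exhibits the $2$-torsion (for instance the Bockstein $\beta s_{2i} = s_{2i+1}$ forces $\Z_2$ summands, as one sees already on the Wu manifold $SU_3/SO_3$). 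The hard part throughout is the symmetric case: pinning down the sign of $\gs^*$ on the degree-$m$ Euler class for $m$ even, and controlling the $2$-primary torsion, are the two places where the clean ``central translation / torsion-free exterior algebra'' picture of the other two families genuinely breaks down.
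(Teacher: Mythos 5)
Your proposal is correct, and on the key point of cohomological triviality it takes a genuinely different route from the paper. For the identification of the fibers and their cohomology the two arguments agree in substance: the paper proves transitivity of $SL_m(\C)$ on each global Milnor fiber by hand (the classification of symmetric and skew-symmetric bilinear forms, plus the determinant normalization $B \mapsto b^{-1}B$), computes the isotropy groups $\{1\}$, $SO_m(\C)$, $Sp_k(\C)$, passes to maximal compact subgroups, and cites Mimura--Toda for the cohomology, exactly as you do; your routing through Lemma~\ref{Lem1b.0} and Lemma~\ref{Lem1b.3} merely lets the general apparatus of \S~\ref{S:sec2} supply transitivity, a cosmetic difference. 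The divergence is in the last claim: the paper never touches the geometric monodromy, but instead computes $H^*(E) \simeq H^*(M\backslash \cD)$ in Theorem~\ref{Thm2.2} --- which in the symmetric case requires the $\Z_2$-invariants of the double cover $U_m/SO_m(\R) \to U_m/O_m(\R)$, i.e.\ Lemma~\ref{Lem2.12} --- and then applies the dimension criterion iii) of Proposition~\ref{Prop1.5}, which fails exactly for $Sym_{2k}$ (both $H^*(E;\bk)$ and $H^*(F;\bk)$ have total dimension $2^k$ there). You instead compute the monodromy directly as a translation, with correct determinant bookkeeping showing that the correction by a reflection $r$ is forced only in the symmetric case; note that your $\gs$, right translation by $g_0 = e^{\pi i/m}r$, is up to the homotopically trivial central scalar the paper's deck transformation $\tau$ of Lemma~\ref{Lem2.12}, so both arguments converge on the identical crux: an orientation-reversing twist negates the degree-$m$ Euler class. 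The paper proves that sign by pulling back to $U_m/T^k$ and splitting the canonical bundle into line bundles; you prove it by transgression naturality in the Serre spectral sequence of $SO_m(\R) \hookrightarrow SU_m \to SU_m/SO_m(\R)$. Your route buys a description of the actual monodromy transformation (directly relevant to the open question of geometric triviality in Remark~\ref{Rem1.1}) and stays internal to the Milnor fibration; the paper's route is less geometric but yields Theorem~\ref{Thm2.2} simultaneously and avoids spectral sequences.

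Three caveats. First, your transgression step is a sketch: you must check that the Euler primitive in $H^{m-1}(SO_m(\R);\bk)$ survives to the transgressive page and that the induced self-map of the fiber is conjugation by $r$ (the central scalar acting trivially); likewise ``$\gs^*$ fixes each $x_{4j+1}$'' needs that these classes pull back injectively to $H^*(SU_m;\bk)$, where translation by an element of the connected group $SU_m$ is homotopic to the identity --- the same device the paper uses for $U_m$ in Lemma~\ref{Lem2.12}. Second, writing the even symmetric answer as $\gL^*\bk\langle x_5,\dots,x_{2m-3}\rangle \otimes \bk[w]/(w^2)$ overstates the ring structure: the paper explicitly notes $e_m^2 \neq 0$ (it is a polynomial in the odd generators), so only the free-module formulation --- which is all the theorem asserts --- is correct. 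Third, your closing ``check'' that $\dim_{\bk} H^*(U_m/O_m(\R);\bk) = 2\dim_{\bk} H^*(SU_m/SO_m(\R);\bk)$ iff $m$ is odd presupposes the invariants computation of Theorem~\ref{Thm2.2}, so it is not an independent confirmation; it is literally the paper's own proof.
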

\par
\vspace{2ex}
\begin{table}[h]
\begin{tabular}{|l|c|c|l|}
\hline
Determinantal  & Milnor Fiber  & Symmetric   & $H^*(F,\bk)$  \\
  Hypersurface &  F   &  Space    &    \\
\hline
$\cD_m^{sy}$ & $SL_m(\C)/SO_m(\C)$ & $SU_m/SO_m$  &  
$\gL^*\bk\langle e_5, e_9, \dots , e_{2m-1}\rangle$\\
(m = 2k+1)  &     &     &    \\
\hline
$\cD_m^{sy}$ & $SL_m(\C)/SO_m(\C)$ & $SU_m/SO_m$  &  
$\gL^*\bk\langle e_5, e_9, \dots , e_{2m-3}\rangle\cdot\{ 1, e_{m}\}$  \\
(m = 2k)  &     &     &     \\
\hline
$\cD_m$  & $SL_m(\C)$ & $SU_m$  &  $\gL^*\bk\langle e_3, e_5, \dots , 
e_{2m-1}\rangle$\\
\hline
$\cD_m^{sk}, m = 2k$ &  $SL_{2k}(\C)/Sp_{k}(\C)$  & $SU_{2k}/Sp_k$  &  
$\gL^*\bk\langle e_5, e_9, \dots , e_{2m-3}\rangle$  \\
\hline
\end{tabular}
\caption{The generators of the cohomology $e_k$ are in degree $k$; and 
the structure is an exterior algebra for all cases except for 
$\cD_{2k}^{sy}$, for which the structure is a free module on $1$ and 
$e_{m}$ over the exterior algebra.}
\label{Mil.fib.sym.sp}
\end{table}
Along with the topology of the Milnor fiber, we can also give the topology 
of the complement and the (co)homology of the link in the next theorem. 
\vspace{1ex}
\begin{Thm} 
\label{Thm2.2}
The complements $M \backslash \cD$ of the determinantal hypersurfaces 
$\cD$ are homogeneous spaces $G/H$ which are homotopy equivalent to 
quotients of maximal compact subgroups $K/L$, given in column 2 of Table 
\ref{Coh.Compl.Lnk}.  The cohomology with coefficients in a field $\bk$ of 
characteristic $0$ of the complement is an exterior algebra  
as given in column 3 of Table \ref{Coh.Compl.Lnk}.  
Moreover, for the regular and skew-symmetric cases the formulas remain 
true when $\C$ is replaced by $\Z$.  
\par  
The cohomology of the link $L(\cD)$ is isomorphic as a graded vector 
space to the cohomology of the complement truncated in the top degree 
and shifted in degree.  The shift is $N - 2$ where $N = \dim_{\C} M$ for all 
cases except the symmetric case with $m$ even; and in that case the shift 
is $N - 2 + m$.  Then the products in the reduced cohomology 
$\widetilde{H}^*(L(\cD); \C)$ are $0$ except possibly for the single 
product $e_1^* \smile 1^*$, where the $e_1^*$ and $1^*$ are the images in 
degrees $N-1$, resp. $N-2$, of $e_1, 1 \in H^*(M \backslash \cD; \C)$ in 
all cases  except for the symmetric case with $m$ even. 
\end{Thm}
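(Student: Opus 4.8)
The plan is to handle the three representations in parallel, first realizing each complement as a homogeneous space and only then reading off its cohomology, in keeping with the paper's strategy of starting from the complement.

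\emph{Complement as $G/H$ and as $K/L$, and its cohomology.} For each representation I would compute the isotropy subgroup $H$ of a base point of the open orbit $\cU$. For $M=M_{m,m}$ with $\GL_m(\C)$ acting by left multiplication the open orbit is $\GL_m(\C)$ with trivial isotropy, so $M\backslash\cD_m\cong\GL_m(\C)$; for $M=\Sym_m$ with $B\cdot A=BAB^{T}$ the nonsingular forms are all equivalent over $\C$ with isotropy $O_m(\C)$, so $M\backslash\cD_m^{sy}\cong\GL_m(\C)/O_m(\C)$; for $M=\Sk_m$ ($m=2k$) the same action has isotropy the complex symplectic group, so $M\backslash\cD_m^{sk}\cong\GL_{2k}(\C)/Sp_{2k}(\C)$. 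Applying Lemma~\ref{Lem1.2} with maximal compact subgroups $U_m$, $O_m(\R)$, $Sp(k)$ yields the homotopy equivalences to $U_m$, $U_m/O_m(\R)$, $U_{2k}/Sp(k)$ of column~2 of Table~\ref{Coh.Compl.Lnk}. Each is a compact symmetric space of type~A whose rational cohomology ring is a classical exterior algebra on odd generators (Cartan), giving column~3; I would cross-check the generator degrees against Theorem~\ref{Thm2.1} via Proposition~\ref{Prop1.8}(i): in the cohomologically trivial cases (general, skew, and symmetric with $m$ odd) one has $H^*(E;\bk)\cong\gL^*\bk\langle s_1\rangle\otimes H^*(F;\bk)$, so the complement is the Milnor-fiber exterior algebra with the degree-one class $e_1=s_1$ adjoined. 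Here lifting the odd generators of $H^*(F;\bk)$ to $H^*(E;\bk)$ and using that odd classes square to zero in characteristic $0$ promotes the vector-space splitting to an isomorphism of exterior algebras by a dimension count. Since $H^*(U_m;\Z)$ and $H^*(U_{2k}/Sp(k);\Z)$ are torsion free the description survives over $\Z$, whereas $U_m/O_m(\R)$ carries $2$-torsion.

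\emph{Orientability.} The feature that isolates the symmetric case with $m$ even is, I expect, orientability of $K/L$. The linear isotropy representation of $O_m(\R)$ on $\mathfrak u(m)/\mathfrak o(m)\cong i\,\Sym_m(\R)$ is $X\mapsto gXg^{T}$, with determinant $(\det g)^{m+1}$, so a reflection $g$ acts by $(-1)^{m+1}$: orientation-preserving exactly when $m$ is odd. Hence $U_m/O_m(\R)$ is orientable iff $m$ is odd, while $U_m$ and $U_{2k}/Sp(k)$ are orientable.

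\emph{Cohomology of the link.} In the orientable cases I apply Proposition~\ref{Prop1.8}(iii) directly; a dimension count gives $\dim_\R K/L=N$ in each (namely $m^2$, $m(m+1)/2$, $2k^2-k$), so the shift $2N-2-\dim_\R K/L$ equals $N-2$. For the symmetric case with $m$ even, $K/L$ is non-orientable, rational Poincar\'e duality on the manifold fails, and the top degree $D$ of the exterior algebra $H^*(U_m/O_m(\R);\bk)$ lies strictly below $N$; computing the generator degrees gives $D=N-m$. Replacing the manifold Poincar\'e duality in the proof of Proposition~\ref{Prop1.8}(iii) by the internal self-duality $H^d(K/L)\cong H^{D-d}(K/L)$ of the exterior algebra, Alexander duality then yields $\widetilde H^{\,j}(L(\cD))\cong H^{\,j-(2N-2-D)}(K/L)$, i.e. the shift $2N-2-D=N-2+m$.

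\emph{Product structure (orientable cases).} By the previous step $\widetilde H^*(L(\cD);\C)$ is concentrated in degrees $N-2$ through $2N-3$, with $1^*$ in degree $N-2$ (image of $1\in H^0$) and $e_1^*$ in degree $N-1$ (image of the degree-one generator). A product of two reduced classes lands in degree $\ge 2(N-2)=2N-4$, so only targets $2N-4$ and $2N-3$ can be nonzero. The sole product into $2N-4$ is $1^*\smile 1^*$, lying in $\widetilde H^{\,2N-4}(L(\cD))\cong H^{N-2}(K/L)\cong H^{2}(K/L)=0$, the last vanishing because $K/L$ is rationally an exterior algebra whose only generator below degree three is $e_1$ and $e_1\smile e_1=0$; hence $1^*\smile 1^*=0$. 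The only product into the top degree $2N-3$ is $e_1^*\smile 1^*$, which is left as possibly nonzero, giving the stated conclusion.

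The routine parts are the isotropy computations, the dimension counts, and the degree-based product vanishing. The hard part will be the symmetric case with $m$ even: one must establish that the complement is a genuine exterior algebra even though, by Theorem~\ref{Thm2.1}, its Milnor fiber is only a module over one, identify the top degree $D=N-m$, and justify substituting the algebra's self-duality for manifold Poincar\'e duality to produce the anomalous shift $N-2+m$. This is precisely where both the failure of cohomological triviality and the non-orientability of $U_m/O_m(\R)$ enter, and it is the step I expect to demand the most care.
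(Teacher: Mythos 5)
Most of your outline tracks the paper's proof closely: the isotropy computations and identifications $M\backslash\cD \simeq \GL_m(\C)$, $\GL_m(\C)/O_m(\C)$, $\GL_{2k}(\C)/Sp_{2k}(\C)$, the dimension counts giving $\dim_{\R}K/L = N$, the degree-based vanishing of products on the link, and the treatment of the anomalous shift (Alexander duality combined with the palindromic self-duality of the exterior algebra in place of Poincar\'{e} duality) are all the paper's arguments; your orientability criterion via the determinant $(\det g)^{m+1}$ of the isotropy representation on $i\,\Sym_m(\R)$ is in fact a cleaner alternative to the paper's indirect observation that the tabulated top cohomology vanishes in degree $\dim_{\R}K/L$ when $m$ is even. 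But there is a genuine gap at the center: the computation of $H^*(U_m/O_m(\R);\C)$ itself. Citing ``Cartan'' for a classical exterior algebra does not cover this space: the standard computations (e.g.\ \cite[Chap.~3, Thm~6.7]{MT}) are for $U_m/SO_m(\R)$, whose cohomology for $m$ even is \emph{not} an exterior algebra but the free module $\gL^* \C\langle e_1, e_5, \dots, e_{2m-3}\rangle\{1, e_m\}$ containing the even-degree class $e_m$. To descend to $U_m/O_m(\R)$ one must take $\Z_2$-invariants under the deck transformation $\tau$ of the double cover $U_m/SO_m(\R) \to U_m/O_m(\R)$ (Lemma \ref{Lem3.3}) and compute $\tau^*$; this is the paper's Lemma \ref{Lem2.12}, which shows $\tau^*$ fixes each odd generator $e_{4i+1}$ (by lifting to $U_m$, where right multiplication by $C_m$ is homotopic to the identity) but sends $e_m \mapsto -e_m$, because $e_m$ is the Euler class of the canonical rank-$m$ bundle, which after pull-back to $U_m/T^k$ splits into plane bundles with exactly one factor orientation-reversed by $\tau$. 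That sign computation is precisely what makes the complement a genuine exterior algebra with top degree $N-m$, and hence what justifies your claims $D = N-m$ and the shift $N-2+m$. Your proposal correctly identifies this as ``the step demanding the most care'' but supplies no argument for it, so the even symmetric case --- the only genuinely hard part of the theorem --- remains unproved.

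A second, subtler problem is the logical order of your ``cross-check'' via Proposition \ref{Prop1.8}(i). Cohomological triviality of the Milnor fibrations is not available when proving this theorem: in the paper it is a \emph{consequence} of Theorems \ref{Thm2.1} and \ref{Thm2.2} together, obtained at the end of this proof by verifying criterion (iii) of Proposition \ref{Prop1.5}, namely $\dim_{\bk}H^*(E;\bk) = 2\dim_{\bk}H^*(F;\bk)$, from the two independently computed tables. Deriving column~3 from triviality would therefore be circular; and triviality in fact \emph{fails} in the symmetric case with $m$ even (where the two total dimensions are both $2^k$, not in ratio $2{:}1$), which is exactly the case where you need help. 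So your cross-check is legitimate only as an a posteriori consistency check in the general, skew, and odd symmetric cases, and cannot substitute for the missing invariant-theoretic computation of $H^*(U_m/O_m(\R);\C)$.
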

\par
\vspace{2ex}
\begin{table}
\begin{tabular}{|l|c|c|l|}
\hline
Determinantal  & Complement  & $H^*(M \backslash \cD,\bk) \simeq$   & 
\,\,Shift  \\
  Hypersurface &  $M \backslash \cD$   &  $H^*(K/L,\bk)$   &   \\
\hline
$\cD_m^{sy}$ & $GL_m(\C)/O_m(\C)$ &  $\gL^*\bk\langle e_1, e_5, \dots , 
e_{2m-1}\rangle$  & $\binom{m+1}{2} - 2$ \\  
(m = 2k+1)  & $\sim U_m/O_m(\R)$    &    &  
\\
\hline
$\cD_m^{sy}$ & $GL_m(\C)/O_m(\C)$ & $\gL^*\bk\langle e_1, e_5, \dots , 
e_{2m-3}\rangle$  &  $\binom{m+1}{2} + m - 2$ \\
(m = 2k)  &     &     & 
\\
\hline
$\cD_m$  & $GL_m(\C) \sim U_m$ & $\gL^*\bk\langle e_1, e_3, \dots , 
e_{2m-1}\rangle$  &  $m^2 - 2$  \\
\hline
$\cD_m^{sk}$ &  $GL_{2k}(\C)/Sp_{k}(\C)$  & $\gL^*\bk\langle e_1, e_5, 
\dots , e_{2m-3}\rangle$  &  $\binom{m}{2} - 2$  \\
(m = 2k) &  $\sim U_{2k}/Sp_{k}$   &     &    \\
\hline
\end{tabular}
\caption{The cohomology of the complements $M \backslash \cD$ and links 
$L(\cD)$ for each determinantal hypersurface $\cD$.  The complements, 
are homotopy equivalent to the quotients of maximal compact subgroups 
$K/L$ with cohomology given in the third column, where the generators of 
the cohomology $e_k$ are in degree $k$; and the structure is an exterior 
algebra.  For the links $L(\cD)$, the cohomology is isomorphic as a vector 
space to the cohomology of the complement truncated in the top degree 
and shifted by the degree indicated in the last column.}
\label{Coh.Compl.Lnk}
\end{table}

\begin{Example}[Simplest Determinantal Hypersurfaces which are Morse 
Singularities]
\label{Exam2.1}
We consider the complex cohomology for the simplest examples in Table 
\ref{Mil.fib.sym.sp} of dimension $> 1$ and clarify the notation.  First, for 
$\cD^{sy}_2$ we have $m = 2$ with $k = 1$.  As $4k - 3 = 1 < 5$, the 
exterior algebra is on $0$ generators.  By this we mean it is just $\C 
\cdot 1$, with $1$ the identity in the cohomology algebra.  The Milnor 
fiber then has complex cohomology $\C \langle 1, e_2 \rangle$.  As 
$\cD^{sy}_2$ is defined by an equation of the form $xz -y^2 = 0$, we see it 
is a Morse singularity with cohomology as stated.  In addition, in Table 
\ref{Coh.Compl.Lnk}, the complement has cohomology $\C \langle 1, e_1 
\rangle$; and after truncating $e_1$ and shifting by $3$, we obtain that 
the reduced complex cohomology of the link $L(\cD^{sy}_2)$, which is $\R 
P^3$, is nonzero only in degree $3$, where it is $\C$.  \par
Next $\cD_2$ is defined by an equation of the form $xw -yz = 0$.  Again it 
is a Morse singularity and the table with $m = 2$ gives for the complex 
cohomology of the Milnor fiber $\gL^* \C \langle e_3\rangle  = \C \langle 
1, e_3\rangle$.  The complement has cohomology $\gL^* \C\langle e_1, 
e_3\rangle  =  \C \langle 1, e_1, e_3, e_1e_3\rangle$, which when 
truncated and shifted gives the reduced complex cohomology of the link 
$L(\cD_2)$ as being $\C$ in degrees $2$, $3$ and $5$ and zero otherwise.  
Similarly, $\cD^{sy}_4$ is defined by the Pfaffian of the form $xw -yv + 
zu = 0$, which is again a Morse singularity and the Table with $m = 2$ 
gives as the complex cohomology of the Milnor fiber  $\gL^* \C\langle 
e_5\rangle  =  \C \langle 1, e_5\rangle$; and the reduced complex 
cohomology of the link $L(\cD^{sk}_2)$ is $\C$ in degrees $4$, $5$ and $9$ 
and zero otherwise.  Because in the later two cases the link is a compact 
oriented manifold,  we see that the one possible nonzero product $e_1^* 
\smile 1^*$ is indeed nontrivial. \par 
\end{Example}
\par
\begin{Example}
\label{Exam2.2}
 Beyond the cases in Example \ref{Exam2.1}, the classical approach of 
representing the Milnor fiber as a bouquet of spheres no longer applies as 
the cohomology algebra would be trivial with no torsion.  For example, for 
$\cD^{sy}_3$, the Table gives the complex cohomology as $\gL^* \C \langle 
e_5\rangle  =  \C \langle 1, e_5\rangle$; however, we also have for 
$\Z_2$-cohomology, $\gL^* \Z_2 \langle s_2, s_3\rangle = \Z_2 \langle 1, 
s_2, s_3, s_2 s_3\rangle$, which gives $2$-torsion in degrees $2$ and 
$3$ and a non-trivial product structure.  Likewise, other higher 
determinantal hypersurfaces have Milnor fibers not homotopy equivalent 
to a bouquet of spheres.  \par
The cohomology of the complement $Sym_3 \backslash \cD^{sy}_3$ is the 
exterior algebra $\gL^* \C \langle e_1, e_5\rangle  = \C \langle 1, e_1, 
e_5, e_1 e_5\rangle$.  Also, $N-2 = 6-2 = 4$, so the link $L(\cD^{sy}_3)$ 
has reduced complex cohomology $\C \langle 1^*, e_1^*, e_5^*\rangle$ 
with degrees $4, 5, 9$.  where the product $e_1^* \smile 1^*$ still has to 
be determined.  
\end{Example}
\par
\subsection*{Stable Homotopy Groups of the Milnor Fibers}
Third, using the periodicity Theorems of Bott, we can also give the stable 
homotopy groups of the Milnor fibers up to the appropriate stable range.  
Again this is because we have already identified them as being homotopy 
equivalent to the classical symmetric spaces, so the results of Bott, as 
applied to these spaces, yield the calculations, with it only being 
necessary to observe where the stable range begins in each case.  For the 
three cases, we the corresponding infinite dimensional symmetric spaces 
are given by: \par
$$\mbS\mbU = \cup_{n = 1}^{\infty} SU_n, \quad \mbS\mbU/\mbS\mbO = 
\cup_{n = 1}^{\infty} SU_n/SO_n, \quad \text{ and } \quad  
\mbS\mbU/\mbS p = \cup_{n = 1}^{\infty} SU_{2n}/Sp_n \, .$$  
We also let the Milnor fibers of the determinantal hypersurfaces be 
denoted by: $F_m$, $F^{sy}_m$, and $F^{sk}_m$.  Then, the homotopy groups 
of the Milnor fibers can be given up to the end of the stable range in terms 
of the stable homotopy groups of the infinite dimensional symmetric 
spaces as follows.
\par
\begin{Thm}
\label{Thm2.3}
The homotopy groups of the Milnor fibers up to the end of the stable range 
are as follows.
\begin{itemize}
\item[i)]
$$ \pi_j(F_m) \,\, \simeq \,\,  \pi_j(SU_m)  \,\, \simeq \,\, 
\pi_j(\mbS\mbU) \qquad \text {for }  j < 2m    $$
\item[ii)]
$$ \pi_j(F^{sy}_m) \,\, \simeq \,\,  \pi_j(SU_m/SO_m)  \,\, \simeq \,\, 
\pi_j(\mbS\mbU / \mbS\mbO) \qquad \text {for }  j < m -1    $$
\item[ii)] for $m = 2k$
$$ \pi_j(F^{sk}_m) \,\, \simeq \,\,  \pi_j(SU_{2k}/Sp_k)  \,\, \simeq \,\, 
\pi_j(\mbS\mbU / \mbS p) \qquad \text {for }  j < 4k -2    $$
\end{itemize}
where the stable homotopy groups are given in Table \ref{Stab.htpy.grps}.
\end{Thm}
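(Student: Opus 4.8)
The plan is to reduce the statement to the homotopy equivalences already recorded in Theorem~\ref{Thm2.1} together with classical homotopy stability for the inclusions of the compact classical groups, and then to read off the stable values from Bott periodicity. By Theorem~\ref{Thm2.1} the Milnor fibers satisfy $F_m \simeq SU_m$, $F^{sy}_m \simeq SU_m/SO_m$, and (for $m = 2k$) $F^{sk}_m \simeq SU_{2k}/Sp_k$, so the first isomorphism in each of i)--iii) holds for all $j$ and carries no range restriction; the content is the second isomorphism, identifying the finite-stage homotopy groups with those of the infinite-dimensional symmetric spaces $\mbS\mbU$, $\mbS\mbU/\mbS\mbO$, and $\mbS\mbU/\mbS p$. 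Since each of these is the direct limit of the corresponding finite-stage spaces under the standard inclusions, and homotopy groups commute with such direct limits of CW-inclusions (every compact set factors through a finite stage), one has $\pi_j(\mbS\mbU) \simeq \varinjlim_n \pi_j(SU_n)$ and likewise in the other two cases. Hence it suffices to prove that the inclusion at stage $m$ (resp.\ $k$) already induces an isomorphism on $\pi_j$ at every later stage within the asserted range, so that the colimit is attained there.

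For the regular case this is the standard sphere-fibration argument: the transitive action of $SU_{m+1}$ on $S^{2m+1} \subset \C^{m+1}$ with isotropy $SU_m$ gives the fibration $SU_m \hookrightarrow SU_{m+1} \to S^{2m+1}$, and its long exact homotopy sequence yields $\pi_j(SU_m) \simeq \pi_j(SU_{m+1})$ for $j < 2m$, since both $\pi_j(S^{2m+1})$ and $\pi_{j+1}(S^{2m+1})$ vanish in that range. As the bound $2m$ grows with the stage, the map $\pi_j(SU_m) \to \pi_j(\mbS\mbU)$ is an isomorphism for $j < 2m$, which is precisely i).

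For the symmetric and skew cases I would instead compare, stage by stage, the two fibrations $SO_m \hookrightarrow SU_m \to SU_m/SO_m$ (resp.\ $Sp_k \hookrightarrow SU_{2k} \to SU_{2k}/Sp_k$) with their one-stage-later analogues, using the resulting ladder of long exact homotopy sequences and the five lemma. The needed input is the classical stability of the fibers: from $SO_{m+1}/SO_m = S^m$ one obtains $\pi_j(SO_m) \simeq \pi_j(SO_{m+1})$ for $j < m-1$ (with surjectivity at $j = m-1$), and from $Sp_{k+1}/Sp_k = S^{4k+3}$ one obtains $\pi_j(Sp_k) \simeq \pi_j(Sp_{k+1})$ for $j < 4k+2$. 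Feeding these together with the $SU$-range of the previous paragraph into the five lemma shows that $\pi_j(SU_m/SO_m) \to \pi_j(SU_{m+1}/SO_{m+1})$ and $\pi_j(SU_{2k}/Sp_k) \to \pi_j(SU_{2k+2}/Sp_{k+1})$ are isomorphisms throughout the ranges $j < m-1$ and $j < 4k-2$ asserted in ii) and iii); as before these ranges increase with the stage, so the stable values $\pi_j(\mbS\mbU/\mbS\mbO)$ and $\pi_j(\mbS\mbU/\mbS p)$ are reached already at stage $m$ and $k$ respectively.

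Finally, having matched the finite-stage groups with the stable homotopy groups of the three infinite symmetric spaces, the explicit entries of Table~\ref{Stab.htpy.grps} follow from Bott's periodicity theorem applied to $\mbS\mbU$, $\mbS\mbU/\mbS\mbO$, and $\mbS\mbU/\mbS p$. I expect the only real work to lie in the third paragraph: one must carry the three different stability ranges of $SU_m$, $SO_m$, and $Sp_k$ through the comparison ladder and check that in each case the binding constraint is (at least) the stated bound, and one must treat the low-degree terms $\pi_0$ and $\pi_1$ with the pointed-set form of the five lemma since the fibers need not be simply connected. Once the fibrations and their connectivities are in hand, the five-lemma step and the passage to the direct limit are routine, and only the smallest values of $m$ (where some of the exterior algebras of Theorem~\ref{Thm2.1} degenerate) require separate inspection to ensure the ranges are nonvacuous.
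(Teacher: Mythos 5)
Your proposal is correct and follows essentially the same route as the paper: identify the Milnor fibers with $SU_m$, $SU_m/SO_m$, $SU_{2k}/Sp_k$ via Theorem~\ref{Thm2.1}, read the stable values from Bott periodicity, and obtain the stability ranges from the long exact homotopy sequences of the sphere fibrations $SU_n \hookrightarrow SU_{n+1} \to S^{2n+1}$, $SO_m \hookrightarrow SO_{m+1} \to S^m$, $Sp_k \hookrightarrow Sp_{k+1} \to S^{4k+3}$ fed through the quotient fibrations $SO_m \hookrightarrow SU_m \to SU_m/SO_m$ and $Sp_k \hookrightarrow SU_{2k} \to SU_{2k}/Sp_k$ via the five lemma. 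You merely make explicit the ladder comparison that the paper compresses into \lq\lq standard type arguments\rq\rq\ (and your sphere $S^{4k+3}$ for the symplectic fibration is the correct one, where the paper's proof has a typo), with your ranges in fact slightly stronger than the conservative bounds asserted in the theorem.
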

\vspace{2ex}
\begin{table}[h]
\begin{tabular}{|l|c|c|c|c|c|c|c|c|c|l|}
\hline
$\pi_j(\mbG/\mbH)$&  &  &   &    &  &  &   &    &  &  \\
 $j =  $  & 0 & 1  & 2   & 3  & 4  & 5   & 6 & 7  & 8  & 9 \\
\hline
$\mbS\mbU $  & $0$ & $0$ & $0$  &  $\Z$ & $0$ & $\Z$  & $0$  & $\Z$   & 
$0$  & $\Z$ \\
\hline
$\mbS\mbU / \mbS\mbO$  & $0$  & $0$ & $\Z_2$  &  $\Z_2$ & $0$  & $\Z$   
& $0$ & $0$ & $0$ & $\Z$\\
\hline
$\mbS\mbU/\mbS p$  & $0$ & $0$ & $0$ & $0$ & $0$ & $\Z$  & $\Z_2$  &  
$\Z_2$ & $0$  & $\Z$ \\
\hline
\end{tabular}
\caption{The stable homotopy groups of the infinite dimensional 
symmetric spaces $\mbG/\mbH$.  They are periodic of period dividing $8$, 
with the periodicity beginning at $j = 2$.}
\label{Stab.htpy.grps}
\end{table}
\par
We note that for the Milnor fibers $F_m$, $m \geq 6$, $F^{sy}_m$, $m \geq 
11$, and $F^{sk}_m$, $m \geq 6$, exhibit all of the homotopy groups 
appearing in Table \ref{Stab.htpy.grps} in their stable ranges. 
\par
\subsection*{Proofs of the Theorems} \hfill 
\par  
\begin{proof}[Proof of Theorem \ref{Thm2.1}]
We first observe that in all three cases the determinantal hypersurface is 
homogeneous defined by either $\det^{-1}(0)$ for $M = Sym_m$ or $M = 
M_{m, m}$ or $\Pf^{-1}(0)$ for $M = Sk_m$ (for $m = 2k$).  By Lemma 
\ref{Lem1.1}, we may consider the  Milnor fibers of the global Milnor 
fibration.  \par 
The simplest case is for the Milnor fiber for the determinantal 
hypersurface $\cD_m$ for $m \times m$ matrices $M_{m, m}$, which is 
just $F = \det^{-1}(1) = SL_m(\C)$.  It is homotopy equivalent to its 
maximal compact subgroup $SU_m$, and so, for example, by 
\cite[Chap. 3, Thm 6.5]{MT},its cohomology is a Hopf algebra given by the   
exterior algebra 
$$   H^*(SU_m; \Z) \,\,  \simeq \,\,  \gL^* \Z \langle e_3,  \dots , e_{2m -
1}\rangle \, .  $$
 where on the $e_i$ are of degree $i$ (which correspond by transgression 
homomorphisms to the Chern classes).  Furthermore, by replacing $\Z$ by 
$\bk$, a field of characteristic $0$,  we obtain the corresponding result 
for cohomology with coefficients in $\bk$.  \par
The second case $\cD_m^{sy}$ is for the $m \times m$ symmetric case $M 
= Sym_m$.  We claim the Milnor fiber $F = \det^{-1}(1)$, is diffeomorphic 
to $SL_m(\C)/SO_m(\C)$.  First, the action of $SL_m(\C)$ on $F$ by $A 
\mapsto BAB^T$ is transitive.  We know by the classification of 
symmetric matrices under the equivalence $A \mapsto BAB^T$, that if 
$\det(A) \neq 0$, then there is a $B \in GL_m(\C)$ such that $BAB^T = 
I_m$, the $m \times m$ identity matrix.  If $\det(B) = b \neq 0$ and 
$\det(A) = 1$, then  $\det(B)\det(A)\det(B^T) = b^2 = 1 = \det(I_m)$.  
Hence, if we replace $B$ by $B^{\prime} = b^{-1}B$, then 
$B^{\prime}AB^{\prime\, T} = b^{-2}\cdot I_m = I_m$, and 
$\det(B^{\prime}) = 1$.  Thus, the orbit of $I_m$ under $SL_m(\C)$ is $F$. 
\par
Moreover, the isotropy subgroup of $I_m$ under the action of $SL_m(\C)$ 
is $\{ B \in SL_m(\C): B\,I_m\,B^T = I_m\}$, which is $SO_m(\C)$.  Hence, 
$F \simeq SL_m(\C)/SO_m(\C)$.  Lastly, the groups $SL_m(\C)$, resp. 
$SO_m(\C)$, are homotopy equivalent to their maximal compact 
subgroups, which are $SU_m$, resp. $SO_m(\R)$.  Hence, by the argument 
given in Lemma \ref{Lem1.2}, $SL_m(\C)/SO_m(\C)$ is homotopy 
equivalent to $SU_m/SO_m(\R)$, which is one of the classical symmetric 
spaces (see \cite[Chap. 3, \S 6]{MT}).  Thus, $H^*(SL_m(\C)/SO_m(\C)) 
\simeq H^*(SU_m/SO_m(\R))$.  The calculation of $H^*(SU_m/SO_m(\R))$ 
is given in e.g. \cite[Chap. 3, Thm 6.7]{MT} depends on whether $m$ is even 
or odd.  If $m$ is odd $ = 2k+1$, then for a field $\bk$ of characteristic 
$0$, it is an exterior algebra 
$$   H^*(SU_m/SO_m(\R); \bk) \,\, = \,\, \gL^* \bk\langle e_5, e_9,  ..., 
e_{2m -1}\rangle \, . $$ 
If $m$ is even $ = 2k$, then it is a free module of rank $2$ over an 
exterior algebra
$$   H^*(SU_m/SO_m(\R); \bk)  \,\, = \,\,  \gL^* \bk\langle e_5, e_9,  ..., 
e_{2m -3}\rangle \{1, e_m\}  $$
In the last case $e_m^2 \neq 0$, but instead equals an expression 
involving products of the odd degree generators.. 
\par
The remaining case is for the $m \times m$ skew-symmetric case $M = 
Sk_m$ with $m = 2k$.  In this case the global Milnor fiber 
$F = \Pf^{-1}(1)$ is diffeomorphic to $SL_m(\C)/Sp_m(\C)$.  \par
First, the action of $SL_m(\C)$ on $F$ by $A \mapsto BAB^T$ is transitive.  
We know by the classification of skew-symmetric matrices under the 
equivalence $A \mapsto BAB^T$, that if $\det(A) \neq 0$, then there is a 
$B \in GL_m(\C)$ such that $BAB^T = J_m$, where $J_m$ is the $2 \times 
2$ block diagonal matrix with $2 \times 2$ diagonal blocks 
$\begin{pmatrix} 0 & 1 \\ -1 & 0 \end{pmatrix}$.  Again if $\det(B) = b 
\neq 0$ and $\Pf(A) = 1$, then  $\Pf(BAB^T) = \det(B) \Pf(A) = b = 1 = 
\Pf(J_m)$.  Hence, $B \in SL_m(\C)$; thus, the orbit of $J_m$ under 
$SL_m(\C)$ is $F$.   The isotropy group of $J_m$ in $SL_m(\C)$ is $\{ B 
\in SL_m(\C) : B J_m B^T = J_m\} = Sp_k(\C)$.  Thus, as before,  $F \simeq 
SL_{2k}(\C)/Sp_k(\C)$.  \par 
Lastly, the groups $SL_{2k}(\C)$, resp. $Sp_k(\C)$, are homotopy 
equivalent to their maximal compact subgroups, which are $SU_{2k}(\C)$, 
resp. $Sp_k$.  We conclude $SL_{2k}(\C)/Sp_k(\C)$ is homotopy equivalent 
to $SU_{2k}(\C)/Sp_k$, which is again one of the classical symmetric 
spaces of compact type (see \cite[Chap. 3, \S 6]{MT}).  \par
Then, by \cite[Chap. 3, Thm 6.7]{MT}
$$   H^*(SU_{2k}(\C)/Sp_k; \Z)  = \gL^* \Z\langle e_5, e_9,  ..., e_{2m -
3}\rangle $$
where again $e_i$ has degree $i$ and $m = 2k$.  We obtain the 
corresponding result with $\Z$ replaced by $\bk$.
 \par
For the symmetric case, we again apply \cite[Chap. 3, Thm 6.7]{MT} to 
conclude that the mod-$2$ cohomology is as claimed. \par
The only remaining claim is that the Milnor fibrations are cohomologically 
trivial except in the case of $m \times m$ symmetric matrices with $m$ 
even.  This will follow from the results in Theorem \ref{Thm2.2} together 
with Proposition \ref{Prop1.5}.
\end{proof}
\begin{proof}[Proof of Theorem \ref{Thm2.2}]
The proof will follow in each case from Proposition \ref{Prop1.8}.  \par 
In the case of $M_{m, m}$, the complement is $\GL_m(\C)$, which has 
maximal compact subgroup $U_m$ with cohomology a Hopf algebra given 
by the exterior algebra 
$$   H^*(U_m; \Z) \,\,  \simeq \,\,  \gL^* \Z \langle e_1, e_3,  \dots , 
e_{2m -1}\rangle \, ,  $$
and replacing $\Z$ by $\bk$ we obtain the corresponding result for 
cohomology with $\bk$ coefficients. \par
Second, in the skew-symmetric case  $M = Sk_m$ with $m = 2k$, the 
isotropy subgroup of $\GL_m(\C)$ for $J_m$ is $\{ B \in \GL_m(\C) : B 
J_m B^T = J_m\} = Sp_m(\C)$.  They have maximal compact subgroups 
$U_m$, resp. $Sp_k(\R)$.  Second by \cite[Chap. 3, Thm 6.7]{MT}, 
$$   H^*(U_{2k}(\C)/Sp_k; \Z)  = \gL^* \Z\langle e_1, e_5, e_9,  ..., e_{2m -
3}\rangle \, ,$$
and we can again replace $\Z$ by $\bk$ to obtain the corresponding result 
for cohomology with coefficients in $\bk$.  \par
Third, the symmetric case is the most subtle.  We will verify it for 
complex cohomology and then the result will follow for any field $\bk$ of 
characteristic $0$.  For $M = Sym_m$, the isotropy subgroup of 
$\GL_m(\C)$ for $I_m$ is $\{ B \in \GL_m(\C) : B I_m B^T = I_m\} = 
O_m(\C)$.  They have maximal compact subgroups $U_m$, resp. $O_m(\R)$; 
and $O_m(\R)$ has two connected components with $SO_m(\R)$ being the 
connected component of the identity.  Thus, first by Proposition 
\ref{Prop1.8}, $H^*(M\backslash \cE; \C) \simeq H^*(U_m/O_m(\R); \C)$.  
Second by \cite[Chap. 3, Thm 6.7]{MT}, we may compute the cohomology 
$H^*(U_m/SO_m(\R); \C)$, which depends on whether $m$ is even or odd.  
If $m$ is odd $ = 2k+1$, then it is an exterior algebra 
\begin{equation}
\label{Eqn2.8}
   H^*(U_m/SO_m(\R); \C) \,\, = \,\, \gL^* \C\langle e_1, e_5, e_9,  ..., 
e_{2m -1}\rangle \, .
\end{equation} 
If $m$ is even $ = 2k$, then it is a free module of rank $2$ over an 
exterior algebra
\begin{equation}
\label{Eqn2.9}
  H^*(U_m/SO_m(\R); \C)  \,\, = \,\,  \gL^* \C \langle e_1, e_5, e_9,  ..., 
e_{2m -3}\rangle \{1, e_m\} \, . 
\end{equation}
To obtain the results for $H^*(U_m/O_m(\R); \C)$ from 
$H^*(U_m/SO_m(\R); \C)$ we use that $p: U_m/SO_m(\R) \to 
U_m/O_m(\R)$ is a double covering obtained from $U_m/SO_m(\R)$ as a 
quotient of the action of $O_m(\R)/SO_m(\R) \simeq \Z_2$.  This action 
is given by a covering transformation $\tau$ of $U_m/SO_m(\R)$ defined 
as follows.  We  let $C_m$ denote the diagonal matrix which equals $1$ 
except for the last entry, which is $-1$.  Then, $\tau(A\cdot SO_m(\R)) = 
A\cdot C_m \cdot SO_m(\R)$.  This is well-defined as $SO_m(\R)$ is 
normal in $O_m(\R)$ so if $A^{\prime} = A\cdot D$ with $D \in SO_m(\R)$, 
then $A^{\prime}\cdot C_m = A\cdot D\cdot C_m = A\cdot C_m \cdot 
D^{\prime}$ with $D^{\prime} = \cdot C_m^{-1} \cdot D \cdot C_m \in 
SO_m(\R)$.  Then, $U_m/O_m(\R)$ is the resulting quotient. \par

We obtain by a standard result, see Lemma \ref{Lem3.3}, 
\begin{equation}
\label{Eqn2.10}
 H^*(U_m/O_m(\R); \C) \,\, \simeq \,\, H^*(U_m/SO_m(\R); \C)^{\Z_2}\, .
\end{equation}
The subtle point of the calculation is to compute the RHS of 
(\ref{Eqn2.10}).  Then, the result for the symmetric case follows from the 
following Lemma.  
\begin{Lemma}
\label{Lem2.12}
The action of $\tau^*$ on $H^*(U_m/O_m(\R); \C)$ is given by 
$\tau^*(e_{4i+1}) = e_{4i+1}$ for all $i$, and if $m$ is even, 
$\tau^*(e_{m}) = - e_{m}$
\end{Lemma}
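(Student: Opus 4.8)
The plan is to compute $\tau^*$ on a set of algebra generators of $H^*(U_m/SO_m(\R);\C)$ (the space on which the deck transformation $\tau$ acts; its $\Z_2$-invariants recover $H^*(U_m/O_m(\R);\C)$ by Lemma~\ref{Lem3.3}). Since $\tau^2=\mathrm{id}$ (because $C_m^2=I_m$), the operator $\tau^*$ is an algebra involution, so it suffices to determine it on the odd generators $e_{4j+1}$ and, when $m$ is even, on the module generator $e_m$. I would treat the two families by genuinely different mechanisms: a Cartan embedding argument for the odd generators, and an Euler class argument for $e_m$.

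For the odd generators I would exploit the Cartan embedding. Let $\sigma(A)=\bar A$ be complex conjugation on $U_m$, whose fixed group is $O_m(\R)$, and set $\psi:U_m/SO_m(\R)\to U_m$, $\psi(A\cdot SO_m(\R))=A\sigma(A)^{-1}=AA^T$. This is well defined, depends only on $A\cdot O_m(\R)$, and hence factors as $\psi=c\circ p$ through the covering $p$; moreover $\psi\circ\tau=\psi$, since $(AC_m)(AC_m)^T=AC_m^2A^T=AA^T$. On the image of $c$ one has $\sigma(h)=h^{-1}$, so $\sigma\circ c=\nu\circ c$ with $\nu$ the inversion of $U_m$, giving $c^*\sigma^*=c^*\nu^*$. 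Writing $x_1,x_3,\dots,x_{2m-1}$ for the primitive generators of $H^*(U_m;\C)$, with $x_{2i-1}$ transgressed from the Chern class $c_i$, inversion acts by $\nu^*(x_{2i-1})=-x_{2i-1}$ and conjugation by $\sigma^*(x_{2i-1})=(-1)^i x_{2i-1}$, so $c^*\sigma^*=c^*\nu^*$ forces $((-1)^i+1)\,c^*(x_{2i-1})=0$. Thus $c^*(x_{2i-1})=0$ for $2i-1\equiv 3\pmod 4$, while $c^*(x_{4j+1})$ survives. I would then take the surviving pullbacks $\psi^*(x_{4j+1})=p^*c^*(x_{4j+1})$ — which the symmetric-space computation of \cite[Chap.~3]{MT} shows, together with $e_m$, to form a generating system — as the generators $e_{4j+1}$. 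Because $\psi\circ\tau=\psi$, each is $\tau$-invariant, so $\tau^*(e_{4j+1})=e_{4j+1}$.

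For the even generator $e_m$ with $m=2k$ I would use an Euler class. The oriented real bundle $E=U_m\times_{SO_m(\R)}\R^m$ over $U_m/SO_m(\R)$ has Euler class $e(E)\in H^m(U_m/SO_m(\R);\C)$. The assignment $[(AC_m,w)]\mapsto[(A,C_m w)]$ is a bundle isomorphism $\tau^*E\simeq E$, well defined because $C_m$ normalizes $SO_m(\R)$, and its fibre map $w\mapsto C_m w$ reverses orientation since $\det C_m=-1$. By naturality of the Euler class, $\tau^*(e(E))=e(\tau^*E)=-e(E)$. Provided $e(E)\neq 0$, writing $e(E)=\lambda e_m+\omega$ with $\omega$ a product of odd generators (hence $\tau$-fixed by the previous paragraph) forces $\lambda\neq 0$, and comparing $e_m$-components yields $\tau^*(e_m)=-e_m$ after normalizing the module generator (replacing $e_m$ by $e_m-\tfrac12\beta$ for the $\Lambda^*(e_1,e_5,\dots)$-part $\beta$ of $\tau^*(e_m)$). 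The nonvanishing $e(E)\neq 0$ I would extract from the Gysin sequence of the associated sphere bundle $U_m/SO_{m-1}(\R)\to U_m/SO_m(\R)$ with fibre $S^{m-1}$, or by citing the identification in \cite[Chap.~3, Thm~6.7]{MT} of the degree-$m$ module generator with this Euler class.

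The main obstacle is exactly these two nonvanishing assertions — that $\psi^*$ is cohomologically essential on the degree $4j+1$ primitives, and that $e(E)\neq 0$ — because the formal manipulations above determine $\tau^*$ only once they are granted. Both are intrinsic statements about the cohomology of the symmetric spaces $U_m/O_m(\R)$ and $U_m/SO_m(\R)$, and I expect to settle them either by the Gysin/transgression computations just indicated or by quoting the explicit generator descriptions in \cite[Chap.~3]{MT}; with them in hand, the bookkeeping using $\tau^2=\mathrm{id}$ and the $\Lambda^*(e_1,e_5,\dots)$-linearity of $\tau^*$ completes the proof.
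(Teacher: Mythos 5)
Your proposal is correct in substance but reaches the lemma by a genuinely different route than the paper in both halves (you also correctly read the statement as being about $H^*(U_m/SO_m(\R);\C)$, where the deck transformation $\tau$ actually acts). For the odd generators the paper's argument is much lighter than your Cartan-embedding computation: it lifts $\tau$ to the right translation $\tau^{\prime}(A) = A\cdot C_m$ of $U_m$, observes that connectedness of $U_m$ gives a path from $C_m$ to $I_m$ making $\tau^{\prime}$ homotopic to the identity, so $\tau^{\prime\,*} = \id$ on $H^*(U_m;\C)$, and then transfers invariance down through $p\circ\tau^{\prime} = \tau\circ p$ using the fact from \cite[Chap.~3, Thm~6.7]{MT} that $p^*$ sends $e_{4j+1}$ uniquely to the primitive $\tilde e_{4j+1}$. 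Your argument via $\psi(A\cdot SO_m(\R)) = AA^T$ and $\psi\circ\tau = \psi$ is valid and geometrically appealing, but it obliges you to prove the essentiality of $\psi^*(x_{4j+1})$, which the homotopy argument never needs; note that your own setup closes this gap, since composing $\psi$ with the projection $U_m \to U_m/SO_m(\R)$ gives $A \mapsto AA^T$, and transposition acts on primitives by $(-1)^{i+1}x_{2i-1}$ (being conjugation composed with inversion), so this map pulls $x_{4j+1}$ back to $2x_{4j+1} \neq 0$; in any case the appeal to \cite{MT} you propose is exactly the kind of citation the paper itself makes. For $e_m$ both you and the paper argue through the Euler class of $E = U_m\times_{SO_m(\R)}\R^m$ and naturality, but you detect the sign by the explicit orientation-reversing bundle automorphism $[(A,w)]\mapsto[(AC_m,C_m w)]$ covering $\tau$ (which checks out, since $C_m$ normalizes $SO_m(\R)$ and $\det C_m = -1$), whereas the paper pulls back along $p^{\prime} : U_m/T^k \to U_m/SO_m(\R)$, splits $p^{\prime\,*}E$ into line bundles $L_1\oplus\cdots\oplus L_k$, and sees the sign appear only on $e(L_k)$, relying on injectivity of $p^{\prime\,*}$. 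Your mechanism is cleaner and avoids that injectivity, but you pay by needing $e(E)\neq 0$ and the renormalization $e_m \mapsto e_m - \tfrac12\beta$ of the module generator; the paper sidesteps both by taking $e_m = e(E)$ directly from \cite[Chap.~3, Thm~6.7]{MT} --- the option you also mention, which would collapse your argument to the paper's. Since the lemma is used only to compute the $\Z_2$-invariant subalgebra in Theorem~\ref{Thm2.2}, the freedom in choosing generators that your normalizations exploit is harmless, so your version proves what is actually needed.
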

It follows from the Lemma that in the odd dimensional case $m = 2k +1$ 
the entire cohomology is invariant under $\tau^*$.  By contrast, in the 
even dimensional case $m = 2k$, the exterior algebra generated by all of  
the odd degree generators $e_{4i+1}$ is invariant under $\tau^*$; however 
the elements in the module on $e_m$ over the exterior algebra are all sent 
to their negatives.  Thus, the invariant cohomology under $\tau^*$ is 
exactly the exterior algebra as stated.  Before proving the Lemma, which 
gives the result for the symmetric case, we give the remaining argument 
for the link.  \par
By Proposition \ref{Prop1.8} we obtain the cohomology as the truncated 
and shifted cohomology of the complement.  If $K/L$ is orientable then the  
degree is shifted by $2N - 2 -\dim_{\R} K/L$.  With the sole exception of 
the $m \times m$ symmetric matrices with $m$ even, for the 
determinantal hypersurfaces, $K/L$ is orientable.  For example, we can 
see this because the cohomology computed in Table \ref{Coh.Compl.Lnk} is 
nonzero in the degree $= \dim_{\R} K/L$.  However, for $m \times m$ 
symmetric matrices with $m$ even, this is not true, so $K/L$ is not 
orientable so we will consider this case separately.  \par
Then, for the other cases, the complement is a quotient of reductive 
groups $G/H$ with $N = \dim_{\C} (G/H) = \dim_{\R} K/L$, so the shift is 
given by $N - 2$.  For the symmetric case with $m$ even, the cohomology 
is still an exterior algebra, but top nonzero cohomology occurs in degree 
$m$ below the top degree ($ N = \dim_{\R} K/L$), if we follow the 
argument in Proposition \ref{Prop1.8}, we see that the shift must be 
altered to $N -2+m$, giving the result as claimed.  \par
The properties of the cohomology product follow by considering degrees.  
We have $\dim_{\R} L(\cD) = 2N-3$, $H^{2N-4}(L(\cD); \C) = 0$ (as 
$H^{2}(K/L; \C) = 0$) in each case.  Excluding the symmetric case with $m$ 
even, the lowest positive degree classes have degrees $N-2$ for $1^*$ and 
$N-1$ for $e_1^*$, then the lowest dimensional products in the reduced 
cohomology are given by: $1^* \cup 1^*$ of degree $2N-4$ and hence $0$, 
and $e_1^* \cup 1^*$ of degree $2N-3$.  All other products have higher 
degree and hence are $0$.  For the symmetric case with $m$ even, the 
lowest degree term $1^*$ has degree $N - 2+ m$, so the lowest degree 
product will have degree $2(N - 2+ m) = 2N -3 + (2m - 1) > 2N -3$ so all 
products are zero in this case. \par
Lastly, we return to the question of the Milnor fibration being 
cohomologically trivial.  Since $H^*(E; \C) \simeq H^*(M \backslash \cE; 
\C)$, the calculations of the cohomology given by Theorems~\ref{Thm2.1} 
and \ref{Thm2.2} (and the universal coefficient theorem) imply in each 
case except for the symmetric case with $m$ even, that (\ref{Eqn1.5b}) is 
satisfied, so by Proposition \ref{Prop1.5}, the Milnor fibration is 
cohomologically trivial.  In the symmetric case with $m = 2k$ even, from 
Table \ref{Mil.fib.sym.sp} we see the cohomology of the Milnor fiber is a 
free module on two generators over an exterior algebra on $k-1$ 
generators, and has total dimension $2^k$; while from Table 
\ref{Coh.Compl.Lnk} the cohomology of the total space is an exterior 
algebra on $k$ generators and has the same total dimension $2^k$.  Thus, 
(\ref{Eqn1.5b}) does not hold, so the Milnor fibration for these cases is not 
cohomologically trivial. 
\end{proof}
\begin{proof}[Proof of Lemma \ref{Lem2.12}]
\par  
We let $\tau^{\prime}$ denote the diffeomorphism of $U_n$ defined by 
$\tau^{\prime}(A) = A\cdot C_m$.  Then, $\tau^{\prime}$ descends in the 
quotient map $p : U_n \to U_n/O_n(\R)$ to yield $\tau$.  Furthermore, by 
\cite[Chap. 3, \S6, Thm 6.7]{MT}, the map $p^*$ in complex cohomology 
uniquely sends $p^*(e_{4j+1}) = \tilde e_{4j+1}$ in $H^*(U_n; \C)$ for all 
$j$ with $e_{4j+1}$ nonzero in $H^*(U_n/O_n(\R); \C)$.  Here we use 
$\tilde e_{2j+1}$ to denote the exterior algebra generators of $H^*(U_n; 
\C)$.  Since $U_n$ is connected, we can choose a path from $C_m$ to $Id$, 
and this defines a homotopy between $\tau^{\prime}$ and the identity map 
on $U_n$.  Thus, $\tau^{\prime\, *}(\tilde e_{4j+1}) = \tilde e_{4j+1}$ for 
all $j$.  Then, 
$$  p^*(\tau^*(e_{4j+1})) \,\, = \,\, \tau^{\prime\, *}(p^*(e_{4j+1})) \,\, = 
\,\, \tau^{\prime}*(\tilde e_{4j+1}) \,\, = \,\, \tilde e_{4j+1} \,\, = \,\, 
p^*(e_{4j-3})\, .$$  
By the uniqueness of $p^*(e_{4j+1})$, we conclude $\tau^*(e_{4j+1}) = 
e_{4j+1}$ as claimed.
\par
It remains to consider in the case $m = 2k$, $\tau^*(e_{m})$ which is the 
Euler class $e(E)$ of the $m$ dimensional bundle $E$ over 
$U_m/SO_m(\R)$ defined from the standard representation of $SO_m(\R)$ 
on $\R^m$.  We consider the fibration $p^{\prime} : U_m/T^k \to 
U_m/SO_m(\R)$ where $T^k$ is the $k$-torus $SO_2(\R) \times \cdots 
\times SO_2(\R)$ (with $k$ factors).  Then $p^{\prime\, *} : 
H^*(U_m/SO_m(\R); \C) \to H^*(U_m/T^k; \C)$ is injective.  Also, the 
pull-back $p^{\prime\, *}(E)$ splits into oriented real $2$-plane bundles 
(or using $SO_2(\R) \simeq U_1$, complex line bundles) $L_1 \oplus L_2 
\oplus \cdots \oplus L_k$.  Then $\tau^*(e(E)) = e(\tau^*E)$.  Since $C_m$ 
is in the normalizer of $T^k$, we can define a diffeomorphism 
$\tau^{\prime\prime}$ of $U_m/T^k$ by $\tau^{\prime\prime}(A\cdot T^k) 
= (A\cdot C_m\cdot T^k)$.  This is seen to be well-define just as was 
$\tau^{\prime}$.  \par 
Then, $p^{\prime\, *}(\tau^*(e_m)) = p^{\prime\, *}(\tau^*(e(E))) 
=\tau^{\prime\prime\, *}(p^{\prime\, *}(e(E)))$.  Also, by the splitting, 
$p^{\prime\, *}(e(E)) = \prod_{j = 1}^{k} e(L_j)$.  Thus, 
$\tau^{\prime\prime\, *}(p^{\prime\, *}(e(E))) = \prod_{j = 1}^{k} 
\tau^{\prime\prime\, *}e(L_j)$.  Now, the effect of $\tau^{\prime\prime}$ 
on the first $k-1$ factors of $T^k$ is as the identity so 
$\tau^{\prime\prime\, *}e(L_j) = e(L_j)$ for $j < k$.  On the last factor 
$\tau^{\prime\prime}$ acts by multiplication by the reflection matrix 
$C_2 = \begin{pmatrix} 1 & 0 \\ 0 & -1 \end{pmatrix}$.  This changes the 
orientation of $L_k$ and hence changes the sign of $e(L_k)$.  \par 
Thus, $\tau^{\prime\prime\, *}e(L_k) = -e(L_k)$.  Hence, 
$\tau^{\prime\prime\, *}(p^{\prime\, *}(e(E))) = - e(E)$.  Finally, from the 
above we conclude 
$$  p^{\prime\, *}(\tau^*(e(E))) \,\, = \,\, \tau^{\prime\prime\, 
*}(p^{\prime\, *}(e(E))) \,\, = \,\, - p^{\prime\, *}(e(E))\, .$$
Since $p^{\prime\, *}$ is injective, we finally conclude $\tau^*(e_m) = 
\tau^*(e(E)) = - e(E) = - e_m$, as claimed.
\end{proof}
\par
\begin{proof}[Proof of Theorem \ref{Thm2.3}]
Lastly, for Theorem \ref{Thm2.3}, we begin by noting that the groups in 
Table \ref{Stab.htpy.grps} are obtained from Bott periodicity, see 
\cite[Chap. 4, \S 6,Table 4.1]{MT}.  Then, as we already know that the 
Milnor fibers are homotopy equivalent to the specific classical symmetric 
spaces, it is only necessary to see that the stable ranges are as stated.  
This follows by standard type arguments applying the homotopy long exact 
sequence to the fibrations $SU_n \hookrightarrow SU_{n+1} \to S^{2n + 
1}$, resp. $SO(n)(\R) \hookrightarrow SO_{n+1}(\R) \to S^{n}$, and $Sp_n 
\hookrightarrow Sp_{n+1} \to S^{4n - 1}$, together with $SO
_n(\R) \hookrightarrow SU_{n} \to SU_n/SO_n(\R)$, and $Sp_n 
\hookrightarrow SU_{2n} \to SU_{2n}/Sp_n(\R)$ to obtain
\begin{itemize}
\item[i)] 
$$  \pi_j(SU_n) \,\, \simeq \,\, \pi_j(SU_{n+1}) \qquad \text{ for } \quad j 
\leq 2n-1 \, ;  $$
\item[ii)] 
$$  \pi_j(SU_n/SO_n(\R)) \,\, \simeq \,\, \pi_j(SU_{n+1}/SO_{n+1}(\R))  
\qquad \text{ for } \quad j \leq n-1 \, ;  and $$
\item[iii)] 
$$  \pi_j(SU_{2n}/Sp_n) \,\, \simeq \,\, \pi_j(SU_{2(n+1)}/Sp_{n+1})  
\qquad \text{ for } \quad j \leq 4n-2  \, . $$
\end{itemize}
This gives the stable range. 
\end{proof}
\section{Cohomology of the Complement and Link for the Equidimensional 
Case}
\label{S:sec4}
\par 
We next return to the class of special prehomogeneous spaces defined by 
equidimensional representations $\rho : G \to GL(V)$ of a connected linear 
algebraic group.  In this section, we compute the topology of the 
complement and link of the exceptional orbit variety $\cE$.  
\subsection*{Topology of the Complement}
\par
\begin{Thm}
\label{Thm3.2}
Consider a prehomogeneous space defined by an equidimensional 
representation of a connected linear algebraic group $\rho : G \to \GL(V)$ 
with exceptional orbit variety $\cE$ and maximal compact subgroup $K$.  
Then, for a field $\bk$ of characteristic $0$, 
\begin{equation}
\label{Eqn3.1}
 H^*(V\backslash \cE; \bk) \,\, \simeq \,\, H^*(K; \bk) \,\, = \,\,  \gL^* \bk 
\langle s_{1}, s_{2}, \dots , s_{k}\rangle \,\, . 
\end{equation}
where $s_{j}$ are classes of odd degree $q_j$. 
\par
In addition, $\pi_i(V\backslash \cE) \simeq \pi_i(K)$ for $i > 1$; and 
there is a short exact sequence
\begin{equation}
\label{CD3.1}
\begin{CD} 
 {1} @>>> {\pi_1(K)} @>>> {\pi_1(V\backslash \cE)} @>>> {H}  @>>>  {1}
\end{CD}  
\end{equation}
where $H$ is the isotropy subgroup of $G$ at a point 
$v_0 \in V\backslash \cE$.
\end{Thm}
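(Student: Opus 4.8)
The plan is to reduce the entire statement to the geometry of the finite covering $K \to K/H$. First I would record that, in the equidimensional case $\dim_{\C} G = \dim_{\C} V$, openness of the orbit $\cU = V \backslash \cE$ forces the isotropy subgroup $H$ of $v_0$ to be zero-dimensional, hence finite; this is exactly the observation already made in Lemma \ref{Lem1b.3}. A finite group is its own maximal compact subgroup, so the subgroup $L$ of Lemma \ref{Lem1.2} coincides with $H$, and in particular $H \subset K$. Lemma \ref{Lem1.2} then supplies a homotopy equivalence $V \backslash \cE \simeq K/L = K/H$, so it suffices to compute the cohomology and homotopy groups of $K/H$.

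Since $H$ is finite and acts freely by right translation on the connected compact Lie group $K$, the quotient map $p : K \to K/H$ is a regular covering with deck transformation group $H$. For the cohomology I would invoke the transfer argument (the characteristic-$0$ analogue of Lemma \ref{Lem3.3}): because $|H|$ is invertible in $\bk$, one has $H^*(K/H; \bk) \simeq H^*(K; \bk)^H$, the subspace of $H$-invariants. The decisive point is that $H$ acts trivially on $H^*(K; \bk)$: each element of the deck group acts as a right translation $R_h$ with $h \in H \subset K$, and since $K$ is connected any path from $h$ to the identity gives a homotopy $R_h \simeq \id$, so $R_h^* = \id$ on cohomology. Hence $H^*(K/H; \bk) \simeq H^*(K; \bk)$, and the Hopf structure theorem used in Lemma \ref{Lem1b.4} identifies this with the exterior algebra $\gL^* \bk\langle s_1, \dots, s_k\rangle$ on $k = \rank K = \rank G$ odd-degree generators, giving (\ref{Eqn3.1}).

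For the homotopy groups I would read off the long exact sequence of the fibration $H \hookrightarrow K \overset{p}{\to} K/H$, whose fiber is discrete. Since $\pi_i(H) = 0$ for $i \geq 1$, this sequence collapses to isomorphisms $\pi_i(K) \simeq \pi_i(K/H)$ for $i \geq 2$, which via the equivalence $V \backslash \cE \simeq K/H$ yields $\pi_i(V \backslash \cE) \simeq \pi_i(K)$ for $i > 1$. At the level of fundamental groups, regularity of the covering with deck group $H$ gives the classical short exact sequence $1 \to \pi_1(K) \to \pi_1(K/H) \to H \to 1$; transporting it across the homotopy equivalence produces (\ref{CD3.1}).

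I expect the only substantive step to be the triviality of the $H$-action on $H^*(K; \bk)$, which is where connectedness of $K$ is essential; everything else is the assembly of Lemma \ref{Lem1.2}, the transfer isomorphism, and the covering-space exact sequences. I would also flag that the transfer isomorphism genuinely requires characteristic $0$, so that no torsion contribution of the kind seen for the symmetric determinantal family can survive here, which is precisely why (\ref{Eqn3.1}) is asserted only over such fields.
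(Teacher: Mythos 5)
Your proposal is correct and takes essentially the same route as the paper: the paper likewise exploits finiteness of the isotropy group $H$, invokes Lemma \ref{Lem3.3} together with the triviality of the deck action on cohomology (via connectedness, through an averaging map), identifies the answer by the Hopf structure theorem, and extracts the homotopy statements from the long exact sequence of $H \hookrightarrow G \to G/H$. The only cosmetic difference is that the paper runs the argument on $G/H \simeq \cU$ itself, averaging over $H$ acting on the connected group $G$, whereas you transport everything to the compact model $K/H$ first; nothing of substance changes.
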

\par  
\begin{proof}
First, for (\ref{Eqn3.1}) we may again apply the Hopf structure theorem 
for a compact connected Lie group $K$ and a field $\bk$ of characteristic 
$0$, $H^*(K; \bk)$, to conclude it is a Hopf algebra which is isomorphic to 
an exterior algebra on classes of odd degree.  
This gives the structure for $H^*(K; \bk)$ in (\ref{Eqn3.1}) 
\par  As $K$ is homotopy equivalent to $G$ we conclude that $H^*(G; \bk)$ 
is also given by the RHS of (\ref{Eqn3.1}).  However, let $v_0 \in \cU$, the 
open orbit of $G$, and let $H$ be the isotropy subgroup of $v_0$, which is 
finite as $\dim_{\C} G = \dim_{\C} V$.  Then, $G/H \simeq \cU = V 
\backslash \cE$.  Thus, we may replace $V \backslash \cE$ by $G/H$. 
Lastly, it is sufficient to show that 
\begin{equation}
\label{Eqn3.2}
  H^*(G/H; \bk) \,\, = \,\,  H^*(G; \bk)  \,\, .  
\end{equation}
This follows in two steps.  First, there is the standard Lemma (see e.g. 
\cite[Chap III, Thm2.4]{Bn}).  
\begin{Lemma}
\label{Lem3.3}
If the finite group $H$ acts freely on a  manifold $M$, then for a field 
$\bk$ of characteristic $0$ or relatively prime to $| H |$, 
$$  H^*(M; \bk)^H \,\, = \,\,  H^*(M/H; \bk)  $$
where $H^*(M; \bk)^H$ denotes the subspace invariant under the induced 
action of $H$ on cohomology
\end{Lemma}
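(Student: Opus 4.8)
The plan is to realize the quotient map as a finite regular covering and play off the pullback against the transfer homomorphism, using that $|H|$ is invertible in $\bk$.

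First I would record the geometric setup. Since $H$ is finite and acts freely on the (Hausdorff) manifold $M$, each point has a neighborhood whose $H$-translates are pairwise disjoint, so $\pi : M \to M/H$ is a covering map of degree $|H|$ with deck group $H$. Because $\pi \circ h = \pi$ for every $h \in H$, the pullback $\pi^* : H^*(M/H; \bk) \to H^*(M; \bk)$ satisfies $h^* \circ \pi^* = \pi^*$, so its image lands in the invariant subspace $H^*(M; \bk)^H$. Thus the problem reduces to showing that $\pi^*$ maps $H^*(M/H; \bk)$ \emph{isomorphically} onto $H^*(M; \bk)^H$.

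Second I would bring in the transfer (Umkehr) homomorphism $\mathrm{tr} : H^*(M; \bk) \to H^*(M/H; \bk)$ attached to the $|H|$-sheeted covering, whose two standard composition identities are $\mathrm{tr} \circ \pi^* = |H| \cdot \id$ on $H^*(M/H; \bk)$ and $\pi^* \circ \mathrm{tr} = \sum_{h \in H} h^*$ on $H^*(M; \bk)$. Since $\bk$ has characteristic $0$ or prime to $|H|$, the scalar $|H|$ is invertible, so from the first identity $\frac{1}{|H|}\mathrm{tr}$ is a left inverse of $\pi^*$, giving injectivity. For surjectivity onto the invariants, I would use the averaging idempotent $e := \frac{1}{|H|}\sum_{h \in H} h^*$, which is a projection of $H^*(M; \bk)$ onto $H^*(M; \bk)^H$ (it fixes invariant classes and its image is $H$-invariant). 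The second identity gives $\pi^* \circ \big(\frac{1}{|H|}\mathrm{tr}\big) = e$, so the image of $\pi^*$ contains $e\big(H^*(M; \bk)\big) = H^*(M; \bk)^H$. Combined with the first-step containment and injectivity, $\pi^*$ is an isomorphism onto $H^*(M; \bk)^H$, as claimed.

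The main obstacle is the existence and the two composition identities for the transfer: these are standard for finite coverings but require either an explicit cochain-level construction (summing a cochain over the sheets of the covering) or an appeal to the general theory. An alternative route that sidesteps $\mathrm{tr}$ is the Cartan--Leray spectral sequence $H^p\big(H; H^q(M; \bk)\big) \Rightarrow H^{p+q}(M/H; \bk)$ for the regular covering; since $|H|$ is invertible in $\bk$, a Maschke-type argument forces $H^p(H; -) = 0$ for $p > 0$ on $\bk$-vector space coefficients, so the spectral sequence collapses onto the edge $H^0\big(H; H^*(M; \bk)\big) = H^*(M; \bk)^H$, again yielding the result. Either way the only genuine input is the invertibility of $|H|$, exactly the hypothesis on $\bk$.
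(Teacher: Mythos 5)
Your transfer argument is correct and coincides with the approach behind the paper's treatment: the paper does not prove Lemma \ref{Lem3.3} at all but cites it as standard from \cite[Chap III, Thm2.4]{Bn}, where the proof is precisely the transfer/averaging argument you give, with injectivity of $\pi^*$ coming from $\mathrm{tr}\circ\pi^* = |H|\cdot\id$ and surjectivity onto $H^*(M;\bk)^H$ from $\pi^*\circ\mathrm{tr} = \sum_{h\in H}h^*$ together with invertibility of $|H|$ in $\bk$. Your Cartan--Leray alternative is equally valid, but the transfer route is the one underlying the cited source, so no comparison of substance arises.
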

Hence,
\begin{equation}
\label{Eqn3.3}
  H^*(G; \bk)^H \,\, = \,\,  H^*(G/H; \bk)  
\end{equation}
Second, we have the averaging map
\begin{align}
\label{Eqn3.3a}
    avg_H : H^*(G; \bk)  &\to  H^*(G; \bk)^H  \notag  \\
    \left[\tau\right]  &\mapsto \frac{1}{| H |} \sum_{\gs \in H} 
\gs^*(\left[\tau\right]) 
\end{align}
which is an isomorphism.  This follows since as $G$ is connected, for any 
cocycle $\tau$ on $G$, the cohomology classes $\left[\gs^*(\tau)\right] = 
\left[\tau\right]$ for all $\gs \in G$; so $\left[avg_H(\tau)\right] = 
\left[\tau\right]$.  
\par
For the second part, we use the long exact sequence in homotopy for the 
fibration $H \hookrightarrow G \to G/H$ (just as in \cite[\S 3]{DP}) to 
obtain the desired isomorphisms and the exact sequence.  
\end{proof}
\par
We state an immediate consequence for the case of an equidimensional 
representation $\rho : G \to \GL(V)$ of a connected solvable linear 
algebraic group with an open orbit.  As $G$ has a maximal compact 
subgroup $T^k$, where $k = \rank (G)$, we obtain from 
Theorem~\ref{Thm3.2}.
\begin{Corollary}
\label{Cor3.5}
Suppose $\rho : G \to \GL(V)$ is an equidimensional representation of a 
connected solvable linear algebraic group which defines a prehomogeneous 
space with exceptional orbit variety $\cE$.  If $\rank (G) = k$, then $V 
\backslash \cE$ is a $K(\pi, 1)$ with $\pi$ a finite extension of $\Z^k$ by 
the finite isotropy group $H$ of a point in $\cU$; and
$$ H^*(V \backslash \cE; \bk) \,\, = \,\, \gL^* \bk \langle s_{1}, s_{2}, 
\dots , s_{k}\rangle  \,\, . $$ 
where each $s_j$ is of degree one.
\end{Corollary}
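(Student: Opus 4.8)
The plan is to deduce everything directly from Theorem~\ref{Thm3.2}, so that the corollary is genuinely a specialization rather than a new argument. The single substantive input is the identification of the maximal compact subgroup $K$ of $G$. Since $G$ is connected, $K$ is a compact \emph{connected} Lie group, and as a closed subgroup of the solvable group $G$ it is itself solvable. A compact connected solvable Lie group is a torus: its Lie algebra is both solvable and of compact type, hence admits an invariant inner product, and a solvable Lie algebra with an invariant inner product is abelian, forcing $K$ to be a torus. By the rank statement recalled in the introduction, $\rank K = \rank G = k$, so $K \cong T^k = (S^1)^k$. (Alternatively, one may invoke the structure theorem $G \cong T \ltimes R_u(G)$ with $R_u(G)$ the contractible unipotent radical and $T \cong (\C^*)^k$; then $G$ deformation retracts onto $T$ and thence onto $(S^1)^k$, giving the same conclusion.)

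With $K \cong T^k$ in hand, the cohomology assertion is immediate. Theorem~\ref{Thm3.2} gives
$$ H^*(V \backslash \cE; \bk) \,\simeq\, H^*(K; \bk) \,=\, H^*(T^k; \bk), $$
and $H^*(T^k; \bk)$ is the exterior algebra $\gL^* \bk \langle s_1, \dots, s_k\rangle$ on $k$ generators, each of degree one, since each circle factor contributes a single class in degree one. This yields the stated ring together with the claim that every $s_j$ has degree one, which is exactly the solvable refinement of the odd-degree statement in Theorem~\ref{Thm3.2}.

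For the aspherical assertion I would again quote Theorem~\ref{Thm3.2}, which provides $\pi_i(V \backslash \cE) \simeq \pi_i(K)$ for $i > 1$. Because $T^k$ is a $K(\Z^k, 1)$, its higher homotopy groups vanish, so $\pi_i(V \backslash \cE) = 0$ for all $i > 1$; hence $V \backslash \cE$ is aspherical, that is, a $K(\pi, 1)$ with $\pi = \pi_1(V \backslash \cE)$. Finally the short exact sequence of Theorem~\ref{Thm3.2} specializes to
$$ 1 \,\longrightarrow\, \pi_1(T^k) \,\longrightarrow\, \pi \,\longrightarrow\, H \,\longrightarrow\, 1, $$
and since $\pi_1(T^k) = \Z^k$ and the isotropy group $H$ is finite, this exhibits $\pi$ as a finite extension of $\Z^k$ by the finite group $H$, as claimed.

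In short, there is no real obstacle here: the corollary is a direct consequence of Theorem~\ref{Thm3.2}. The only place requiring care is the structural lemma that the maximal compact subgroup of a connected solvable complex linear algebraic group is a torus $T^k$ with $k = \rank G$; once that is recorded, both the cohomology computation and the $K(\pi,1)$ description follow by substituting $K = T^k$ into the general statement.
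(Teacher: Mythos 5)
Your proposal is correct and takes essentially the same route as the paper: the paper derives Corollary~\ref{Cor3.5} precisely by substituting the maximal compact subgroup $K \cong T^k$ (with $k = \rank G$) into the cohomology isomorphism, the higher homotopy isomorphisms, and the short exact sequence of Theorem~\ref{Thm3.2}, exactly as you do. Your argument that a compact connected solvable Lie group is a torus (or, alternatively, the retraction of $G \cong T \ltimes R_u(G)$ onto $(S^1)^k$) simply supplies the standard structural fact that the paper asserts without proof when it writes that $G$ has maximal compact subgroup $T^k$.
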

\par
\begin{Remark}
\label{Rem3.1}
Since an equidimensional representation of a connected solvable linear 
algebraic group with open orbit $\cU$ is a (possibly nonreduced) block 
representation, the first part of Corollary \ref{Cor3.5} was given in 
\cite{DP}.  The second part extends for cohomology the results in 
\cite{DP} which for the special cases of representations corresponding to 
(modified) Cholesky-type factorizations furthermore showed that the 
complement and Milnor fibers were homotopy tori.  
\end{Remark}
\par
\subsection*{Topology of the Link}
\par 
We can immediately deduce 
the cohomology of the link of the exceptional orbit variety $\cE$.
For an exterior algebra $A = \gL^* \bk \langle s_{1}, s_{2}, \dots , 
s_{k}\rangle$, with generators $s_i$ of odd degrees $q_i$,we let 
$$ \widetilde{\gL^* \bk} \langle s_{1}, s_{2}, \dots , s_{k}\rangle\left[ 
r\right] $$
denote the algebra obtained from $A$ by removing the top degree term, and 
shifting degrees upward by degree $r$.  Then, there is the following result 
for the link $L(\cE)$ of $\cE$.
\begin{Thm}
\label{Thm3.6}
Let $\rho : G \to \GL(V)$ be an equidimensional representation of a linear 
algebraic group defining a prehomogeneous space with exceptional orbit 
variety $\cE$, and a maximal compact subgroup $K$.  Then, for a field 
$\bk$ of characteristic $0$, there is an isomorphism of graded vector 
spaces
$$ \widetilde{H}_*(L(\cE); \bk) \,\, \simeq \,\, \widetilde{H}^*(L(\cE); 
\bk) \,\, \simeq \,\,  \widetilde{\gL^* \bk} \langle s_{1}, s_{2}, \dots , 
s_{k}\rangle\left[ 2N - 2 -\dim_{\R}K\right]  $$
where $N = \dim_{\C} V = \dim_{\C} G$ and $k = \rank(G)$.
\end{Thm}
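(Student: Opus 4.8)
The plan is to derive Theorem~\ref{Thm3.6} directly from Theorem~\ref{Thm3.2} and part iii) of Proposition~\ref{Prop1.8}, so that the only genuine work is to verify the orientability hypothesis needed to invoke the latter. First I would record the relevant structure of the complement in the equidimensional case. Here the isotropy subgroup $H$ of $v_0 \in \cU$ is finite, so its maximal compact subgroup $L$ coincides with $H$, and we may arrange $H = L \subset K$; consequently $\dim_{\R} K/L = \dim_{\R} K$. By Theorem~\ref{Thm3.2} we have $H^*(V \backslash \cE; \bk) \simeq H^*(K; \bk) = \gL^* \bk\langle s_1, \dots, s_k\rangle$ with the $s_j$ of odd degrees $q_j$, and by Lemma~\ref{Lem1.2} this identifies $H^*(K/L; \bk)$ with the same exterior algebra in a degree-preserving way (including the top class).

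The key step, and the point I expect to require the most care, is to check that $K/L = K/H$ is orientable. Since $H$ is finite with $H \subset K$, the quotient is formed by the free right-translation action of $H$ on $K$, so $K \to K/H$ is a finite covering and $K/H$ is a compact manifold of dimension $\dim_{\R} K$. Orientability is exactly where the connectedness of $K$ enters: for $h \in H \subset K$, I would choose a path from the identity to $h$ inside the connected group $K$, and the resulting one-parameter family of right translations supplies a homotopy from $R_h$ to the identity. Hence each $R_h$ preserves orientation, $H$ acts on $K$ by orientation-preserving diffeomorphisms, and $K/H$ is orientable. This argument is elementary but is the lynchpin that makes Proposition~\ref{Prop1.8} iii) applicable.

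With orientability established, Proposition~\ref{Prop1.8} iii) applies with shift $2N - 2 - \dim_{\R} K/L = 2N - 2 - \dim_{\R} K$, giving
$$ \widetilde{H}^*(L(\cE); \bk) \,\, \simeq \,\, \widetilde{H^*(K/L; \bk)}\left[2N - 2 - \dim_{\R} K\right]. $$
Substituting the exterior-algebra description of $H^*(K/L; \bk)$ then yields precisely $\widetilde{\gL^* \bk}\langle s_1, \dots, s_k\rangle\left[2N - 2 - \dim_{\R} K\right]$. To see that the two truncations agree I would note that $K$ is a compact connected orientable Lie group, so its top nonzero cohomology lies in degree $\sum q_j = \dim_{\R} K$ and is spanned by the product $s_1 \cdots s_k$; thus removing the top-degree cohomology of $K/L$ (as in the definition of $\widetilde{H^*(\,\cdot\,)}$) is the same as removing the top term $s_1 \cdots s_k$ of the exterior algebra. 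Finally, the homology statement $\widetilde{H}_*(L(\cE); \bk) \simeq \widetilde{H}^*(L(\cE); \bk)$ follows immediately from the universal coefficient theorem over the field $\bk$, which identifies reduced homology and reduced cohomology in each degree as dual vector spaces of equal dimension.
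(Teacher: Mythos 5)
Your proof is correct and takes essentially the same route as the paper, whose entire argument is to cite Proposition~\ref{Prop1.8} together with Theorem~\ref{Thm3.2}. The one thing you add is the explicit verification of the orientability hypothesis in Proposition~\ref{Prop1.8}~iii) --- that $K/L = K/H$ is orientable because right translation by any $h$ in the finite group $H$ is homotopic to the identity in the connected group $K$ --- together with the check that the top class $s_1 \cdots s_k$ sits in degree $\dim_{\R} K$, both of which the paper leaves implicit.
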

\par
\begin{proof}
This result follows from Proposition \ref{Prop1.8} using Theorem 
\ref{Thm3.2}.
\end{proof}
\begin{Corollary}
\label{Cor3.7}
In the situation of Theorem \ref{Thm3.6},  
\begin{itemize}
\item[i)] If $G$ is solvable and $\not\simeq (\C^*)^N$, then the degree 
shift $> N-2$; hence products in the reduced cohomology  
$\widetilde{H}^*(L(\cD); \bk)$ are $0$.  
\item[ii)]  When $G$ is reductive, the degree shift equals $N - 2$, so  
products in the reduced cohomology $\widetilde{H}^*(L(\cD); \C)$ are $0$ 
except possibly for the single product $e_1^* \cup 1^*$, where the 
$e_1^*$ and $1^*$ are the images in degrees $N-1$, resp. $N-2$, of $e_1, 1 
\in H^*(M \backslash \cD; \bk)$.  
\end{itemize}
\end{Corollary}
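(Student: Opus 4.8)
The plan is to reduce both statements to the formula of Theorem~\ref{Thm3.6}, so that everything hinges on computing the degree shift $r = 2N - 2 - \dim_{\R} K$ in the two cases and then performing an elementary degree count against the dimension $\dim_{\R} L(\cE) = 2N - 3$ of the link. The two inputs I need are the values of $\dim_{\R} K$. First, if $G$ is solvable then its maximal compact subgroup is the compact torus $K = T^k$ with $k = \rank(G)$ (as already used in Corollary~\ref{Cor3.5}), so $\dim_{\R} K = k$. Second, if $G$ is reductive then $K$ is a compact real form of $G$, so $\g = \k \oplus i\k$ and $\dim_{\R} K = \dim_{\C} G = N$; this gives $r = 2N - 2 - N = N - 2$ and proves the shift assertion in (ii).

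For (i) I would argue as follows. Since $G$ is connected solvable of dimension $N$ and rank $k$, the equality $k = N$ would force the maximal torus to be all of $G$, i.e. $G \simeq (\C^*)^N$; hence the hypothesis $G \not\simeq (\C^*)^N$ gives $k < N$, so the shift $r = 2N - 2 - k$ satisfies $r \ge N - 1 > N - 2$. In the truncated, shifted exterior algebra of Theorem~\ref{Thm3.6} every reduced class then sits in degree $\ge r$, so any product of two of them lies in degree $\ge 2r \ge 2N - 2 > 2N - 3 = \dim_{\R} L(\cE)$ and therefore vanishes. This proves that all products in $\widetilde{H}^*(L(\cE); \bk)$ are zero.

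For (ii) the shift is exactly $r = N - 2$, so the reduced classes occupy degrees between $N-2$ and $2N - 3$, and a product of two of them has degree $\ge 2(N-2) = 2N - 4$. Only degrees $2N - 4$ and $2N - 3$ lie at or below the top degree $2N - 3$ of the link, so these are the only degrees in which a nonzero product can occur. The bottom reduced class $1^*$ (the shift of $1 \in H^0(K)$) is the unique class in degree $N - 2$, and the classes in degree $N - 1$ are exactly the shifts of the degree-one generators of the exterior algebra $H^*(K; \bk) \simeq H^*(V \backslash \cE; \bk)$ of Theorem~\ref{Thm3.2}. Consequently the only candidate products are $1^* \cup 1^*$ in degree $2N - 4$ and the products of $1^*$ with the degree-one classes in degree $2N - 3$; every other product is forced to vanish by degree.

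The remaining point, which I expect to be the main obstacle, is to dispose of $1^* \cup 1^*$ and to isolate the single surviving product. By Alexander duality (\ref{Eqn1.11}) together with Theorem~\ref{Thm3.2}, $\widetilde{H}^{2N-4}(L(\cE); \bk) \simeq \widetilde{H}_2(V\backslash\cE; \bk) \simeq H^2(K; \bk)$, and since $H^*(K; \bk)$ is an exterior algebra on odd-degree generators one has $H^2(K; \bk) \simeq \gL^2 H^1(K; \bk)$. This vanishes precisely when $H^1(K; \bk)$ is one-dimensional, i.e. when there is a single degree-one generator $e_1$, the class pulled back from the character $\chi_0$ of Lemma~\ref{Lem1b.0}, which is what occurs in the irreducible situation of the determinantal hypersurfaces. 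In that case $1^* \cup 1^* = 0$ and the only possibly nonzero product is $1^* \cup e_1^*$ in top degree $2N - 3$, giving (ii). (More generally, when $\cE$ has several components the same degree count shows that the only products which can survive are those pairing $1^*$ with a degree-one class $e_j^*$.)
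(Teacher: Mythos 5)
Your proposal follows the paper's own route essentially verbatim on the quantitative parts. The paper proves (i) exactly as you do: for $G$ solvable and $\not\simeq (\C^*)^N$ one has $\dim_{\R} K < \dim_{\C} G = N$, so the shift exceeds $N-2$ and every product of reduced classes lands in degree $\geq 2N-2 > 2N-3 = \dim_{\R} L(\cE)$, hence vanishes. For (ii) the paper likewise notes $\dim_{\R} K = \dim_{\C} G = N$ (with $L = H$ finite) to get the shift $N-2$, and then simply says the product assertion ``follows as for the analogous property in Theorem \ref{Thm2.2}'' --- which is precisely your degree count: the only candidate products at or below the top degree $2N-3$ are $1^* \smile 1^*$ in degree $2N-4$ and the pairings of $1^*$ with degree-one classes in degree $2N-3$.

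Where you go beyond the paper is in trying to dispose of $1^* \smile 1^*$, and there your closing parenthetical is wrong as stated. In Theorem \ref{Thm2.2} this product dies because $\widetilde{H}^{2N-4}(L(\cD); \bk) \simeq H^{2}(K/L; \bk) = 0$, and you correctly trace that vanishing to $H^1(K;\bk)$ being one-dimensional. But in the general reductive equidimensional case $\dim_{\bk} H^1(K;\bk)$ equals the number of irreducible components of $\cE$ (Corollary \ref{Cor3.8}), so with several components $H^2(K;\bk) \simeq \gL^2 H^1(K;\bk) \neq 0$: in the $D_4$ quiver of Example \ref{Ex4.11} one has $N = 6$, $2N-4 = 8$, and $\widetilde{H}^{8}(L(\cD_{(D_4,\bd)});\bk)$ is $3$-dimensional. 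Your parenthetical asserts that ``the same degree count'' then leaves only the products $1^* \smile e_j^*$; but the degree count does not exclude $1^* \smile 1^*$, which sits in degree $2N-4 \leq 2N-3$, and by your own duality computation the receiving group is nonzero exactly in this multi-component situation. So your proof of (ii) is complete only when $\cE$ is irreducible; otherwise $(1^*)^2$ remains unaccounted for. To be fair, the paper's one-line appeal to Theorem \ref{Thm2.2} has the same blind spot, since the cited vanishing $H^2(K/L;\bk)=0$ fails there; a careful writeup should either list $1^* \smile 1^*$ among the possibly nonzero products or supply a separate vanishing argument --- for instance, when $N$ is odd the class $1^*$ has odd degree $N-2$, so graded commutativity over a field of characteristic $0$ already forces $(1^*)^2 = 0$, but for $N$ even neither your argument nor the paper's settles it.
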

\begin{proof}[Proof of Corollary \ref{Cor3.7}]
If $G$ is solvable and $\not\simeq (\C^*)^N$ then $\dim_{\R} K < 
\dim_{\C} G = N$; hence, $2N - 2 - \dim_{\R} K > N-2$.  Thus the product of 
two classes in $\widetilde{H}^*(L(\cD); \bk)$ have degree $\geq 2N -2$ 
and hence is $0$.  \par
For the equidimensional case when $G$ is reductive, $\dim_{\C} G = 
\dim_{\R} K$ and $H = L$ is finite, so $N = \dim_{\C} G = \dim_{\R} K/L$.  
Hence, the degree shift is simply $N - 2$.  \par
Then, the argument follows as for the analogous property in Theorem 
\ref{Thm2.2}.
\end{proof}
\par
As a second corollary we consider the number of irreducible components 
of $\cE$.  Each component $W_i$ such will contribute a generator from the 
fundamental class 
$\left[ W_i \cap S^{2N-1}\right]$ to $H_{2N-3}(L(\cE); \bk)$.  Applying 
Theorem \ref{Thm3.6} we obtain as a corollary
\begin{Corollary}
\label{Cor3.8}
For a representation $\rho : G \to \GL(V)$ as in Theorem \ref{Thm3.6}, the 
number of irreducible components of $\cE$ equals $\dim H_1(K; \bk)$.  
In particular, when $G$ is solvable, the number equals $\rank(G)$. 
\end{Corollary}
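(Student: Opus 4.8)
The plan is to identify the number of irreducible components of $\cE$ with $\dim_\bk H_{2N-3}(L(\cE); \bk)$, the dimension of the top homology of the link, and then to extract this number from the explicit description of $\widetilde{H}_*(L(\cE); \bk)$ furnished by Theorem \ref{Thm3.6}.

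For the geometric input I would argue as follows. Since $\cE$ is a reduced hypersurface in $V = \C^N$ it has pure complex dimension $N-1$, so $L(\cE) = \cE \cap S^{2N-1}$ has pure real dimension $2N-3$. Writing $\cE = \bigcup_i W_i$ for the decomposition into irreducible components, homogeneity of $\cE$ makes each $W_i$ the cone over $L(W_i) = W_i \cap S^{2N-1}$, which, being homotopy equivalent to the connected space $W_i \setminus \{0\}$, is a connected compact pseudomanifold of dimension $2N-3$, oriented on its smooth locus. The pairwise intersections $L(W_i) \cap L(W_j)$ and the singular loci all have dimension $< 2N-3$, so a Mayer--Vietoris argument in top degree gives $H_{2N-3}(L(\cE); \bk) \simeq \bigoplus_i H_{2N-3}(L(W_i); \bk)$, with each summand spanned by the fundamental class $[L(W_i)]$. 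Hence
$$ \#\{\text{irreducible components of } \cE\} \;=\; \dim_\bk H_{2N-3}(L(\cE); \bk). $$
This is the step I expect to be the main obstacle: it rests on the standard but not entirely routine fact that the top homology of a pure-dimensional complex-analytic link is freely generated by the fundamental cycles of its components, and one must check independence (each $[L(W_i)]$ is already a cycle, so no boundary relations intervene) and spanning (lower-dimensional strata contribute nothing in the top degree).

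The remaining step is degree bookkeeping against Theorem \ref{Thm3.6}. Put $D = \dim_\R K$. The exterior algebra $\gL^* \bk\langle s_1, \dots, s_k\rangle \simeq H^*(K; \bk)$ has a unique top degree, namely $\sum_j q_j = D$ (since $K$ is a compact connected orientable manifold of dimension $D$), and this is precisely the degree removed by the truncation. A class of original degree $j \le D-1$ is sent to degree $j + (2N - 2 - D)$ by the shift, and solving $j + (2N-2-D) = 2N-3$ yields $j = D-1$, which lies below the truncated top degree. Therefore
$$ H_{2N-3}(L(\cE); \bk) \;=\; \widetilde{H}_{2N-3}(L(\cE); \bk) \;\simeq\; H^{D-1}(K; \bk), $$
the equality of reduced and unreduced homology holding since $2N-3 > 0$. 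Poincar\'{e} duality on the compact orientable group $K$ together with the universal coefficient theorem then gives $H^{D-1}(K; \bk) \simeq H_1(K; \bk)$, and combining with the geometric count proves that the number of components equals $\dim_\bk H_1(K; \bk)$.

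For the final assertion I would invoke Corollary \ref{Cor3.5}: when $G$ is connected solvable its maximal compact subgroup is the torus $T^k$ with $k = \rank(G)$, so every generator $s_j$ has degree one and $\dim_\bk H_1(K; \bk) = k = \rank(G)$, as claimed.
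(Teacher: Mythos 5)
Your proposal is correct and takes essentially the same route as the paper, which likewise identifies the generators of $H_{2N-3}(L(\cE); \bk)$ with the fundamental classes $\left[ W_i \cap S^{2N-1}\right]$ of the irreducible components and then reads off their number from Theorem \ref{Thm3.6}. Your Mayer--Vietoris justification of the fundamental-class step and your explicit degree bookkeeping (locating degree $2N-3$ at internal degree $D-1$ below the truncation and invoking Poincar\'{e} duality on the compact orientable group $K$ to get $H^{D-1}(K;\bk) \simeq H_1(K;\bk)$) simply fill in details that the paper's terse proof asserts without elaboration.
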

\begin{proof}
The generators of
$$  H_{2N-3}(L(\cE); \bk) \,\, \simeq \,\, H^{2N-3}(L(\cE); \bk) \, .$$
are given by the fundamental classes $\left[ W_i \cap S^{2N-1}\right]$ for 
the components $W_i$ of $\cE$.  Thus, by Theorem \ref{Thm3.6}, the 
number of components equals $\dim_{\bk} H_1(K; \bk)$.  \par
If $G$ is solvable of rank $k$, then $K = T^k$ so $\dim_{\bk} H_1(T^k; \bk) 
= k$
\end{proof}
\begin{Example}
\label{Ex3.9}
We compare the topology of the links for the exceptional orbit 
hypersurfaces in $M_{2, 2}$ defined for $2 \times 2$ matrices 
$\begin{pmatrix} x & y \\ z & w \end{pmatrix}$:  $\cD_2$, the 
determinantal hypersurface of singular matrices (defined by $x w-y z$), 
for the representation of left multiplication by $\GL_2(\C)$; the linear 
free* divisor $\cE_2^{\prime}$ (defined by $x (x w-y z)$) for the 
representation for Cholesky factorization using the solvable group $B_2 
\times N_2^T$ (with $B_2$ the Borel subgroup of lower triangular 
matrices and $N_2^T$ the upper triangular unipotent matrices); and the 
linear free divisor $\cE_2$ (defined by $x y (x w-y z)$) for the 
representation for modified Cholesky factorization using the solvable 
group $B_2 \times C_2^T$ (with $B_2$ the Borel subgroup of lower 
triangular matrices and $C_2^T \simeq \C^*$ the invertible diagonal 
matrices with top entry $1$)  (also see \cite{DP2} for the second and 
third).  Table \ref{Comp.excep.hyprs} gives the maximal compact 
subgroups $K$, compact model for the Milnor fiber, and the reduced 
cohomology of the links in the nonvanishing dimensions.  These exhibit the 
increased complexity of the cohomology of the link, and changes in the 
topology of the Milnor fiber resulting from successively adding two 
hyperplanes to obtain the linear free divisor $\cE_2$.  
\end{Example}

\begin{table}[h]
\begin{tabular}{|l|c|c|c|c|c|c|l|}
\hline
$\cE$  &$h$  & $K$  &  $F$  & $\widetilde{H}^J(L(\cE),\bk)$  &  &  &   \\
  &  &  &  model  &  $j = 2$  & $3$ & $4$ &$5$   \\
\hline
$\cD_2$ & $x w - y z$ & $U_2$ & $SU_2 \simeq S^3$  &  $\bk$ & $\bk$ & 0  
& $\bk$\\
\hline
$\cE_2^{\prime}$ & $x (x w - y z)$ & $T^2$ & $S^1$  &  $0$ & $0$ & $\bk$  
& $\bk^2$  \\
\hline
$\cE_2$  & $x y (x w - y z)$ & $T^3$ & $T^2$  &  $0$ &  $\bk$ & $\bk^3$  &  
$\bk^3$\\
\hline
\end{tabular}
\caption{\small Three exceptional orbit hypersurfaces arising from 
equidimensional representations in Example \ref{Ex3.9}, with defining 
equation $h = 0$, together with the maximal compact subgroup $K$ of $G$, 
the compact model for the global Milnor fiber $F$, and the nonzero reduced 
cohomology groups $\widetilde{H}^J(L(\cE),\bk)$.  }
\label{Comp.excep.hyprs}
\end{table}

\section{Topology of the Milnor Fiber for Equidimensional 
Representations}
\label{S:sec5}  
\par
We next apply the preceding results on the cohomology of the complement 
together with properties of the the Wang sequence in \S \ref{S:sec1} and 
the results for equidimensional representations in \S \ref{S:sec2} to 
compute the topology of the Milnor fiber of $\cE$.  We use the notation of 
\S\S \ref{S:sec1} and \ref{S:sec2}. \par
\begin{Thm}
\label{Thm4.2}
Consider a prehomogeneous space defined by an equidimensional 
representation $\rho : G \to \GL(V)$ of a connected linear algebraic group 
$G$ of rank $k$ with maximal compact subgroup $K$ and exceptional orbit 
variety $\cE$.  Then, the global Milnor fibration of $\cE$ is 
cohomologically trivial.  Furthermore, for a field $\bk$ of characteristic 
$0$, 
$$  H^*(F; \bk) \,\, \simeq  \,\, \gL^* \bk <e_2, \dots , e_k>  \, .$$
Here $e_j = \iti^*(s_j)$, where $\iti : F \hookrightarrow V \backslash 
\cE$ is the inclusion and  
$$ H^*(V \backslash \cE; \bk) \,\, \simeq  \,\, H^*(K; \bk) \,\, \simeq  
\,\,\gL^* \bk <s_1, s_2, \dots , s_k> $$ 
with $\deg s_1 = 1$.   \par
Moreover, the homotopy groups of $F$ are given by $\pi_j(F) \simeq 
\pi_j(G)$ for $j \geq 2$; and there is the exact sequence 
\begin{equation}
\label{CD5.1}
\begin{CD} 
{1} @>>>  {\pi_1(G^{\prime}_0)} @>>>  {\pi_1(F)} @>>> {H} @>>> {1}  
\end{CD}
\end{equation}
where $H$ is the isotropy group of $G^{\prime}_0$ for a point in $F$ and 
$\pi_1(G^{\prime}_0)$ is in the exact sequence
\begin{equation}
\label{CD5.2}
\begin{CD} 
{1} @>>>  {\pi_1(G^{\prime}_0)} @>>>  {\pi_1(G)} @>>> {\Z} @>>> {1}  
\end{CD}
\end{equation}
\end{Thm}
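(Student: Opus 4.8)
The plan is to assemble the statement from the structural results of \S\ref{S:sec1} and \S\ref{S:sec2}, the crux being a single dimension count that feeds into Proposition~\ref{Prop1.5}. First I would establish cohomological triviality of the global Milnor fibration. By Lemma~\ref{Lem1.1} the total space $E$ is homotopy equivalent to the complement $V\backslash\cE$, so Theorem~\ref{Thm3.2} gives
$$ H^*(E;\bk)\,\simeq\,H^*(V\backslash\cE;\bk)\,\simeq\,\gL^*\bk\langle s_1,\dots,s_k\rangle, $$
of total dimension $2^k$. For the fiber, Lemma~\ref{Lem1b.3} gives $F\simeq G^{\prime}_0/H^{\prime}$ with $H^{\prime}$ finite, realizing $G^{\prime}_0$ as a finite regular cover of $F$. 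Since $G^{\prime}_0$ is connected, the deck transformations (right translations by elements of $H^{\prime}$) are homotopic to the identity, so $H^{\prime}$ acts trivially on $H^*(G^{\prime}_0;\bk)$; the averaging argument of Lemma~\ref{Lem3.3}, exactly as in the proof of Theorem~\ref{Thm3.2}, then yields $H^*(F;\bk)\simeq H^*(G^{\prime}_0;\bk)$. By Lemma~\ref{Lem1b.2}, $G^{\prime}_0\simeq K^{\prime}_0$, a compact connected group of rank $k-1$ (Lemma~\ref{Lem1b.0}), so $\dim_{\bk}H^*(F;\bk)=2^{k-1}$. Hence $\dim_{\bk}H^*(E;\bk)=2\,\dim_{\bk}H^*(F;\bk)$, and the equivalence of (i) and (iii) in Proposition~\ref{Prop1.5} gives cohomological triviality.

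Second, I would read off the algebra structure of $H^*(F;\bk)$. With cohomological triviality in hand, Proposition~\ref{Prop1.8}(i) identifies the ring map $\iti^*:H^*(E;\bk)\to H^*(F;\bk)$ as the quotient by the ideal $\bk\langle s_1\rangle\smile H^*(E;\bk)$. Since $s_1$ is the degree-one Wang class, nonzero in $H^1(E;\bk)$ by the discussion preceding Example~\ref{Ex1.3}, and the degree-one component of the exterior algebra $\gL^*\bk\langle s_1,\dots,s_k\rangle$ consists precisely of its weight-one generators, the class $s_1$ may be taken as one of the exterior generators. Quotienting an exterior algebra by the ideal generated by a single generator gives the exterior algebra on the remaining generators, so $H^*(F;\bk)\simeq\gL^*\bk\langle e_2,\dots,e_k\rangle$ with $e_j=\iti^*(s_j)$, as claimed.

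Third, the homotopy statements follow from two exact sequences. The finite cover $G^{\prime}_0\to F$ of Lemma~\ref{Lem1b.3} gives $\pi_j(F)\simeq\pi_j(G^{\prime}_0)$ for $j\geq 2$, together with the covering exact sequence (\ref{CD5.1}) whose covering group is $H=H^{\prime}$. The fibration $G^{\prime}_0\hookrightarrow G\to\C^*$ of Lemma~\ref{Lem1b.0}, whose base is homotopy equivalent to $S^1$, has a long exact homotopy sequence giving $\pi_j(G^{\prime}_0)\simeq\pi_j(G)$ for $j\geq 2$ (as $\pi_j(\C^*)=0$ there), and its tail $0=\pi_2(\C^*)\to\pi_1(G^{\prime}_0)\to\pi_1(G)\to\pi_1(\C^*)=\Z\to\pi_0(G^{\prime}_0)=0$ is exactly (\ref{CD5.2}). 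Combining the two isomorphisms yields $\pi_j(F)\simeq\pi_j(G)$ for $j\geq 2$.

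The main obstacle is pinning down $\dim_{\bk}H^*(F;\bk)=2^{k-1}$ \emph{exactly}, since cohomological triviality, and hence the whole theorem, rests on the equality $\dim_{\bk}H^*(E;\bk)=2\,\dim_{\bk}H^*(F;\bk)$ rather than on an inequality. This requires both that passing to the finite cover $G^{\prime}_0\to F$ leaves characteristic-zero cohomology unchanged, which is the triviality of the $H^{\prime}$-action and is valid only because $G^{\prime}_0$ is connected, and that the rank drops by exactly one, which is Lemma~\ref{Lem1b.0}. Everything else is formal manipulation of the long exact sequences already set up in the excerpt.
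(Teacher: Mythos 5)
Your proposal is correct and follows essentially the same route as the paper's proof: Theorem~\ref{Thm3.2} for $H^*(E;\bk)$, the finite cover $G^{\prime}_0 \to F$ handled via the averaging argument of Lemma~\ref{Lem3.3} together with Lemma~\ref{Lem1b.2}, the dimension criterion of Proposition~\ref{Prop1.5} for cohomological triviality, the quotient by the ideal $(s_1)$ for the algebra structure, and the same covering and fibration exact sequences for (\ref{CD5.1}) and (\ref{CD5.2}). The only cosmetic difference is that you inline the Hopf-structure dimension count $2^k = 2\cdot 2^{k-1}$ where the paper instead cites Lemma~\ref{Lem1b.4}, whose proof is exactly that count.
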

\begin{proof} 
We can apply apply Proposition \ref{Prop1.8} and Theorem \ref{Thm3.2}, to 
compute for the global Milnor fibration, the cohomology 
\begin{equation}
\label{Eqn5.5}
H^*(E; \bk) \,\, \simeq \,\, H^*(V\backslash \cE; \bk) \,\, \simeq \,\, 
H^*(K; \bk)\, ,
\end{equation}
 where 
\begin{equation}
\label{Eqn5.6}
H^*(K; \bk) \,\, \simeq \,\, \gL^*\bk\langle s_1, \dots , s_k\rangle   
\end{equation}
with $k = \rank K$.  \par
Second, by Lemma \ref{Lem1b.3}, $G^{\prime}_0$ is a finite covering 
space of $F$, so by Lemma \ref{Lem3.3} and then Lemma \ref{Lem1b.2}, 
\begin{equation}
\label{Eqn5.7}
H^*(F; \bk) \,\, \simeq \,\,  H^*(G^{\prime}_0; \bk) \,\, \simeq \,\,  
H^*(K^{\prime}_0; \bk)  
\end{equation}
Finally by Lemma \ref{Lem1b.4}, since $K^{\prime}_0 \hookrightarrow K 
\to  S^1$ is cohomologically trivial, we have by Proposition \ref{Prop1.5} 
that $\dim_{\bk} H^*(K; \bk) = 2 \,\dim_{\bk} H^*(K^{\prime}_0; \bk)$.
Combining the preceding we obtain
\begin{equation}
\label{Eqn5.8}
 \dim_{\bk} H^*(E; \bk)\,\, = \,\, 2\, \dim_{\bk} H^*(F; \bk) \, . 
\end{equation}
Thus, by Proposition \ref{Prop1.5} the global Milnor fibration is 
cohomologically trivial.  
Also, by the discussion for (\ref{CD1.4}) in \S \ref{S:sec1}, there is a 
nontrivial class $s_1 \in H^1(E; \bk)$, and we may choose  $s_1$ to be one 
of the generators of the exterior algebra.  Thus, combining (\ref{Eqn5.5}) 
and (\ref{Eqn5.6}) we have an isomorphism of algebras 
\begin{equation}
\label{Eqn5.12}
  H^*(E; \bk)  \,\, \simeq \,\, \gL^* \bk \langle s_2, \dots , s_k\rangle \,\, 
\oplus \,\, \left( \langle s_1 \rangle \cdot \gL^* \bk \langle s_2, \dots , 
s_k\rangle\right) \, . 
\end{equation} 
where the product in the second summand is cup prouct.
\par
As the cohomology of the global Milnor fibration is cohomologically 
trivial, the second summand on the RHS of (\ref{Eqn5.12}) maps by $\iti^* : 
H^*(E; \bk) \to H^*(F; \bk)$ to be $0$.  It follows by consideration of 
dimensions and Proposition \ref{Prop1.5} that the first summand maps by 
$\iti^*$ isomorphically to $H^*(F; \C)$. This gives the desired conclusion. 
\par
For the homotopy groups $\pi_j(F)$, we use the finite covering space 
$G^{\prime}_0 \to F$ with covering group $H^{\prime}$.  The exact 
sequence (\ref{CD5.1}) follows from the basic relation between 
fundamental groups for a regular covering, and the higher homotopy groups 
follow from e.g. the long exact homotopy sequence for a fibration.  Lastly 
the exact sequence (\ref{CD5.2}) follows from the long exact homotopy 
sequence of the fibration $G^{\prime}_0 \hookrightarrow G \to \C^*$ since 
both $G^{\prime}_0$ and $G$ are connected.
\end{proof}
\par
\subsection*{Linear Free and Free* Divisors for Solvable Linear Algebraic 
Groups} 
\par
We consider the case of an equidimensional representation $\rho : G \to 
\GL(V)$ defining a prehomogeneous space when $G$ is solvable, with 
exceptional orbit variety $\cE$.  As $G$ has a maximal compact subgroup 
$T^k$, where $k = \rank (G)$, we already know by Theorems~\ref{Thm3.6} 
and \ref{Thm4.2} that the cohomology of both the complement 
$V\backslash \cE$ and Milnor fiber $F$ are an exterior algebras on $k$, 
resp. $k-1$ generators, all of which are of degree $1$.   \par 
As in Theorem 4.1 of \cite{DP}, we can explicitly construct the generators 
from the basic relative invariants $p_i$ of the representation. $\rho$.  By 
Corollary \ref{Cor3.8}, there are $k = \rank G$ irreducible components 
$W_i$ of $\cE$.  By \cite{SK}, the homogeneous defining equations $p_i$ 
for $W_i$ are {\em basic relative invariants} and are independent.  We let 
$\gw_i = \frac{dp_i}{p_i} = p_i^*(\frac{dz}{z})$, which provide $k$ closed 
one-forms on $\cU = V \backslash \cE$.  The reduced homogeneous 
defining equation for $\cE$ is given by $f = \prod_{i = 1}^{k} p_i$.  Then, 
$\tilde \gw = \frac{df}{f}$ defines a cohomology class in $H^1(V 
\backslash \cE; \C)$ whose restriction $\tilde \gw | F = 0$ as $df | F = 0$  
(since $f \equiv 1$ on $F$).  \par
We deduce an extension of the result of Damon-Pike \cite{DP}.
\begin{Thm}
\label{Thm4.1}
Let $\rho : G \to \GL(V)$ be an equidimensional representation of a 
solvable linear algebraic group $G$ of rank $k$, defining a prehomogeneous 
space with exceptional orbit variety $\cE$.  Then, \par
\begin{itemize}
\item[i)] $H^1(V \backslash \cE, \C)$ is the exterior algebra on  the set of 
generators $\gw_i$ for $i = 1, \dots k$. 
\item[ii)] $H^1(F, \C)$ is generated by the $\{ \gw_i, i = 1, \dots , k\}$ 
with a single relation $\sum_{i = 1}^{k}  \gw_i = 0$.  Hence,  $H^*(F, \C)$ 
is the exterior algebra on any subset of $k -1$ of the $\gw_i$.
\end{itemize}
\end{Thm}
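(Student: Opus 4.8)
The plan is to deduce both parts from the structural results already in hand, reducing everything to a single period computation. By Corollary~\ref{Cor3.5} (in the solvable case $K = T^k$) we know $H^*(V\backslash\cE;\C) \simeq \gL^*\C\langle s_1,\dots,s_k\rangle$ with all generators in degree $1$, so $\dim_\C H^1(V\backslash\cE;\C) = k$; and by Corollary~\ref{Cor3.8} the $k$ basic relative invariants $p_i$ account for exactly the $k = \rank G$ irreducible components of $\cE$. Since the $\gw_i = \frac{dp_i}{p_i}$ are $k$ closed $1$-forms on $\cU = V\backslash\cE$, part i) amounts to showing that their classes are linearly independent in $H^1(V\backslash\cE;\C)$; the exterior-algebra structure is then inherited from Corollary~\ref{Cor3.5}.

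First I would pin down $H_1(\cU;\C)$ using the group action. The orbit map gives a finite covering $G \to \cU = G/H$, and the inclusion $T^k \hookrightarrow G$ of the maximal compact torus is a homotopy equivalence, so the images $\gamma_1,\dots,\gamma_k$ of the circle factors form a basis of $H_1(\cU;\Q) \cong \Q^k$. Each $p_i$ is a relative invariant with a character $\chi_i : G \to \C^*$, so $p_i(t\cdot v_0) = \chi_i(t)\,p_i(v_0)$; restricting $\chi_i$ to the compact torus and integrating gives
\begin{equation*}
\int_{\gamma_j} \gw_i \,\, = \,\, 2\pi i\, m_{ij},
\end{equation*}
where $m_{ij}$ is the winding number of $\chi_i$ along $\gamma_j$ and $M = (m_{ij})$ is the integer weight matrix. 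Thus the pairing of the $\gw_i$ against the basis $\{\gamma_j\}$ is $2\pi i\, M$, and the classes $\gw_i$ are linearly independent precisely when $\det M \neq 0$.

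The main point, and the place where the hypotheses enter, is therefore the invertibility of $M$, equivalently the linear independence of the characters $\chi_1,\dots,\chi_k$. If instead $\sum_i n_i\chi_i = 0$ for integers $n_i$ not all zero, then $\prod_i p_i^{n_i}$ has trivial character, hence is a $G$-invariant rational function; since $G$ has an open orbit, any such function is constant, contradicting the fact that the $p_i$ are distinct irreducible (algebraically independent) basic relative invariants in the sense of Sato--Kimura~\cite{SK}. Hence $\det M \neq 0$, the $\gw_i$ form a basis of $H^1(V\backslash\cE;\C)$, and part i) follows. This mirrors, and extends beyond block representations, the argument of \cite[Thm 4.1]{DP}.

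For part ii) I would feed part i) into the Wang-sequence analysis of Theorem~\ref{Thm4.2}. Identifying $V\backslash\cE$ with $E$ up to homotopy, that theorem shows the restriction $\iti^* : H^1(V\backslash\cE;\C)\to H^1(F;\C)$ is surjective onto the $(k-1)$-dimensional space $H^1(F;\C)$ with one-dimensional kernel spanned by the Wang generator $s_1$. Now $\tilde\gw = \frac{df}{f} = \sum_{i=1}^k \gw_i$ restricts to $0$ on $F$ (as $f\equiv 1$ there) yet is a nonzero class in $H^1(V\backslash\cE;\C)$ by Example~\ref{Ex1.3}, so it spans this kernel. Because the $\gw_i$ are a basis upstairs by part i), a relation $\sum_i c_i\,\iti^*(\gw_i)=0$ holds exactly when $\sum_i c_i\gw_i \in \C\langle\sum_j\gw_j\rangle$, i.e. when $(c_1,\dots,c_k)$ is a multiple of $(1,\dots,1)$; thus the classes $\iti^*(\gw_1),\dots,\iti^*(\gw_k)$ span $H^1(F;\C)$ subject to the single relation $\sum_i\gw_i = 0$ and no other. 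Finally, since this relation has all entries nonzero, dropping any one $\gw_j$ leaves $k-1$ classes with no remaining relation, hence a basis of $H^1(F;\C)$; as $H^*(F;\C)$ is the exterior algebra on $H^1(F;\C)$ by Theorem~\ref{Thm4.2}, it is the exterior algebra on any such subset of $k-1$ of the $\gw_i$.
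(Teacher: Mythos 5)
Your proof is correct and follows the same route as the paper's: part i) rests on the exterior-algebra structure of $H^*(V \backslash \cE; \C)$ from Theorem~\ref{Thm3.2}/Corollary~\ref{Cor3.5} together with showing that the $\gw_i$ generate $H^1$, and part ii) feeds this into Theorem~\ref{Thm4.2} using that $\tilde \gw = \sum_i \gw_i$ restricts to $0$ on $F$. The only difference is one of completeness rather than of method: where the paper defers the generation step to \lq\lq the methods of \cite[\S 4]{DP}\rq\rq, you prove it in place via the period pairing $\int_{\gamma_j} \gw_i = 2\pi i\, m_{ij}$ against the torus generators of $H_1(\cU; \Q)$ and the multiplicative independence of the characters of the basic relative invariants (an invariant rational function on a space with dense orbit is constant, contradicting unique factorization) --- and you also make the \lq\lq single relation\rq\rq claim in ii) fully explicit by observing that the kernel of $\iti^*$ on $H^1$ is one-dimensional and spanned by $\tilde\gw$, a point the paper's proof leaves terse.
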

\begin{proof}
\par
We know by Theorem \ref{Thm3.2} that $H^*(V \backslash \cE; \C)$ is an 
exterior algebra on $k$ generators of degree $1$.  Then, we can apply same 
the methods in \cite[\S 4]{DP} to show that the $k$ closed $1$-forms 
$\gw_i$ defined from the basic relative invariants $p_i$ are a set of 
generators for $H^1(V \backslash \cE; \C)$; and hence, are exterior algebra 
generators for $H^*(V \backslash \cE; \C)$.  By Theorem \ref{Thm4.2}, the 
complex cohomology of the global Milnor fiber is an exterior algebra on 
the pull-backs of $k-1$ of the generators $\{ \gw_i\}$ for $H^1(V 
\backslash \cE; \C)$.   Since the pullback of $\sum_{i = 1}^{k} \gw_i = 
\tilde \gw = 0$, and $k-1$ of the generators will suffice.  \par
\end{proof}

\subsection*{Equidimensional Representations of Reductive Linear 
Algebraic Groups} 
\par 
In the case where $G$ is a connected reductive linear algebraic group with 
maximal compact subgroup $K$, and $\rho : G \to \GL(V)$ defines a 
prehomogeneous space, then the results apply even if the exceptional orbit 
variety $\cE$ is not a linear free divisor.  However, we give one 
consequence for an important class of such representations for quivers of 
finite type considered by Buchweitz-Mond which do give linear free 
divisors.  \par
\subsubsection*{Quivers of Finite Representation Type}
\par
A quiver is a connected finite directed graph $\gG$ with edges $e(\gG) = 
\{\ell_j\}$, vertices $v(\gG) = \{v_i\}$, where we denote the initial 
vertex for $\ell_j$ by $\iti(\ell_j)$ and end point by $\ite(\ell_j)$.  To 
define a {\em representation of the quiver} $\gG$, we associate to each 
vertex $v_i$ a finite dimensional complex vector space $V_i$ of 
dimension $d_i$ and for each edge $\ell_j$ a linear transformation 
$\varphi_j : V_{\iti(\ell_j)} \to V_{\ite(\ell_j)}$.  Then, the tuple $\{ 
\varphi_j\}_{\ell_j \in e(\gG)}$ of linear transformations $\varphi_j : 
V_{\iti(\ell_j)} \to V_{\ite(\ell_j)}$ is a quiver representation.  With an 
ordering on the vertices $\{v_i \in v(\gG)\}$, we let $\bd = (d_i)$ denote 
the {\em dimension vector} for the quiver representation.  \par 
Together a set of such transformations $\{\varphi_j\}$ forms a quiver 
representation space $V \simeq \prod_{\ell_j \in \cL} 
\Hom(V_{\iti(\ell_j)}, V_{\ite(\ell_j)})$.  The group $\tilde G = 
\prod_{v_i \in v(\gG)} \GL(V_i)$ acts on $V$ by 
$$ \{ \psi_i\} \cdot \{\varphi_j\} \,\, = \,\, \{\psi_{\ite(\ell_j)}\circ 
\varphi_j\circ \psi_{\iti(\ell_j)}^{-1}\}  \qquad { for } \quad \{ \psi_i\} 
\in \tilde G,\,\,  \{\varphi_j\} \in V  $$
The group $\C^*$ embeds in each $\GL(V_i)$ as the standard 
$\C^*$-action.  The diagonal embedding of $\C^*$ in $\tilde G$ defines a 
subgroup of $\tilde G$ which acts trivially on $V$.  We let $G = \tilde 
G/\C^*$.  Then, the quiver is said to be of {\em finite representation type} 
if $G$ has only a finite number of isomorphism classes of {\em 
indecomposable quiver representations}.  \par 
 The classification of quivers of finite representation type was done by 
Gabriel \cite{G, G2} and they correspond to the Dynkin diagrams of type 
$A$, $D$, or $E$ with the indecomposable quiver representations for 
appropriate dimension vectors $\bd$ which are the positive Schur roots 
corresponding to the Dynkin diagram.  The quiver arrows can go in either 
direction.  Then, it is a fact that for these dimension vectors the 
representation of $G$ on $V$ is an equidimensional representation with an 
open orbit, which hence defines a prehomogeneous space whose 
exceptional orbit variety, denoted $\cD_{(\gG, \bd)}$, is called the {\em 
discriminant of the quiver}.  
Buchweitz-Mond prove in \cite{BM} that for quivers of finite 
representation type the discriminant $\cD_{(\gG, \bd)}$ is a linear free 
divisor.  \par
As a result of Theorems~\ref{Thm4.2} and \ref{Thm3.6}, we can compute 
the cohomology in characteristic $0$ of both the link and Milnor fiber of 
the quiver discriminant.  

To do so we need a simple lemma. 
\par 
Let $G$ be a connected linear algebraic group with $\itj : \C^* 
\hookrightarrow G$ a subgroup, such that for $S^1 \subset \C^*$, 
$\itj^{\,\prime} : S^1 \subset K$ for $K$ a maximal compact subgroup of 
$G$, of rank $k$.  
By Hopf\rq s structure theorem, 
\begin{equation}
\label{Eqn4.4}
H^*(K; \bk) \,\, \simeq  \,\, \gL^* \bk <s_1, \dots , s_k>  \, .
\end{equation}
with each $s_i$ of odd degree $q_i$ and $s_1$ of degree $1$.  
\begin{Lemma}
\label{Lem4.3}
In the preceding situation, suppose that $\itj^{\,\prime\, *}(s_1)$ 
generates  $H^1(S^1; \bk)$.  Then, 
\begin{equation}
\label{Eqn4.5}
  H^*(G/\C^*; \bk) \,\, \simeq  \,\, \gL^* \bk <s_2, \dots , s_k> \, .
\end{equation}
\end{Lemma}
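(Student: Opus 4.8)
The plan is to reduce the statement to a computation of $H^*(K/S^1;\bk)$ and then to show that the circle $S^1$ splits off $K$ as a direct factor. First I would argue, exactly as in the proof of Lemma \ref{Lem1.2}, that $K/S^1 \hookrightarrow G/\C^*$ is a homotopy equivalence: the inclusions $S^1 \hookrightarrow \C^*$ and $K \hookrightarrow G$ are homotopy equivalences and fit into a map of fibre bundles $S^1 \hookrightarrow K \to K/S^1$ and $\C^* \hookrightarrow G \to G/\C^*$, so comparing the two long exact homotopy sequences and applying the five Lemma gives $\pi_j(K/S^1) \simeq \pi_j(G/\C^*)$ for all $j$, after which Whitehead's theorem finishes. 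Thus it suffices to compute $H^*(K/S^1;\bk)$.

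The crucial step will be to turn the hypothesis on $s_1$ into a genuine splitting. Since $H^1(K;\bk)=\bk\langle s_1\rangle$ is one-dimensional, $H^1(K;\Z)\simeq\Z$, and its integral generator lifting $s_1$ is represented by a character $\phi:K\to S^1$, normalized so that $\phi^*$ sends the standard generator of $H^1(S^1;\Z)$ to $s_1$. Then the self-character $\phi\circ\itj^{\,\prime}$ of $S^1$ pulls that generator back to $\itj^{\,\prime\,*}(s_1)$, which is again a generator by hypothesis; hence $\phi\circ\itj^{\,\prime}$ has degree $\pm1$ and is an isomorphism. After adjusting $\itj^{\,\prime}$ by this isomorphism we may assume $\phi\circ\itj^{\,\prime}=\id$, so that $\itj^{\,\prime}$ is a section of $\phi$. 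Setting $K^{\prime}=\ker\phi$, a closed hence compact subgroup, the section splits $K$ as a semidirect product $K^{\prime}\rtimes S^1$: every $g\in K$ is uniquely $k^{\prime}\cdot\itj^{\,\prime}(s)$ with $k^{\prime}\in K^{\prime}$, so the quotient map restricts to a diffeomorphism $K^{\prime}\xrightarrow{\ \sim\ }K/S^1$. Moreover $K^{\prime}$ is connected (from the long exact homotopy sequence of $K^{\prime}\hookrightarrow K\to S^1$ together with the section) and has $\rank K^{\prime}=k-1$.

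To finish I would apply Hopf's structure theorem to the compact connected Lie group $K^{\prime}$, so that $H^*(K^{\prime};\bk)$ is an exterior algebra on odd-degree generators, and then pin down the degrees by a Poincar\'e-polynomial count. The diffeomorphism $K^{\prime}\times S^1\simeq K$, $(k^{\prime},s)\mapsto k^{\prime}\cdot\itj^{\,\prime}(s)$, gives $P_K(t)=(1+t)\,P_{K^{\prime}}(t)$ by the K\"unneth formula, and comparing with $P_K(t)=(1+t)\prod_{i=2}^k(1+t^{q_i})$ coming from (\ref{Eqn4.4}) yields $P_{K^{\prime}}(t)=\prod_{i=2}^k(1+t^{q_i})$. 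An exterior algebra with this Poincar\'e polynomial must have its generators in degrees $q_2,\dots,q_k$, so $H^*(G/\C^*;\bk)\simeq H^*(K^{\prime};\bk)\simeq\gL^*\bk\langle s_2,\dots,s_k\rangle$, as claimed.

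I expect the splitting step to be the main obstacle, and the place where the hypothesis is indispensable: for a general circle subgroup the quotient $K/S^1$ is only a homogeneous space, whose cohomology need not be an exterior algebra at all, so without $\itj^{\,\prime\,*}(s_1)$ being a generator the conclusion would fail. The hypothesis is precisely what forces $\itj^{\,\prime}$ to be a section of a character, thereby realizing $K/S^1$ as a manifold diffeomorphic to the subgroup $K^{\prime}$, to which Hopf's theorem applies.
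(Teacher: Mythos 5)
Your reduction to $K/S^1$ is exactly the paper's first step, but the splitting argument that follows contains a genuine gap: the hypothesis only concerns cohomology with coefficients in a field $\bk$ of characteristic $0$, so \lq\lq $\itj^{\,\prime\, *}(s_1)$ generates $H^1(S^1; \bk)$\rq\rq\ means only that the restriction is \emph{nonzero}. It does not follow that the composite character $\phi\circ \itj^{\,\prime}$ has degree $\pm 1$ --- only that its degree $d$ is nonzero --- and without degree $\pm 1$ there is no section, no semidirect product decomposition $K \simeq K^{\prime}\rtimes S^1$, and no diffeomorphism $K^{\prime} \simeq K/S^1$. This failure occurs precisely in the paper's intended application: take $K = U_m$ ($m \geq 2$) with $S^1$ its center (this is the situation for the factors $U_{d_i}$ of $\tilde K$ in the quiver setting). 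Every character of $U_m$ is $\det^r$, which restricts to the central circle with degree $rm$, never $\pm 1$; the subgroup $K^{\prime} = \ker\det = SU_m$ maps onto $U_m/S^1 = PU_m$ as an $m$-fold covering, not a diffeomorphism. The hypothesis of the Lemma nevertheless holds here (the restriction is $m$ times a generator, hence nonzero over $\bk$), and the conclusion is true, so your claim that the hypothesis \lq\lq forces $\itj^{\,\prime}$ to be a section of a character\rq\rq\ is refuted by this case. Your preliminary assertion that $H^1(K;\bk) = \bk\langle s_1\rangle$ is one-dimensional is also false in general: the Lemma only requires $s_1$ to have degree $1$, and in the quiver application $H^1(\tilde K;\bk)$ has dimension equal to the number of vertices.

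The argument can be patched in characteristic $0$: with $d \neq 0$ the multiplication map $K^{\prime}\times S^1 \to K$, $(k^{\prime}, z) \mapsto k^{\prime}\cdot \itj^{\,\prime}(z)$, is a finite covering with deck group $\ker(\phi\circ\itj^{\,\prime}) \simeq \Z_{|d|}$, and $K^{\prime} \to K/S^1$ is a finite quotient; since deck transformations on a connected Lie group are translations homotopic to the identity, Lemma \ref{Lem3.3} gives $H^*(K;\bk) \simeq H^*(K^{\prime};\bk)\otimes H^*(S^1;\bk)$ and $H^*(K/S^1;\bk) \simeq H^*(K^{\prime};\bk)$, after which your Poincar\'e polynomial count goes through. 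But note that the paper sidesteps all of this: it applies the Leray--Hirsch theorem directly to the fibration $S^1 \hookrightarrow K \to K/S^1$, using the hypothesis only to guarantee that $\{1, s_1\}$ restricts to a basis of $H^*(S^1;\bk)$, so that $H^*(K;\bk)$ is a free $H^*(K/S^1;\bk)$-module on $\{1, s_1\}$ and $H^*(K/S^1;\bk) \simeq H^*(K;\bk)/(s_1 \smile H^*(K;\bk)) \simeq \gL^*\bk\langle s_2, \dots, s_k\rangle$. That route requires no integrality, no character theory, and no normality of the circle subgroup.
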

\begin{proof}
By the same argument given in Lemma \ref{Lem1.2}, $K/S^1 
\hookrightarrow G/\C^*$ is a homotopy equivalence.  Hence it is sufficient 
to prove the result for $K/S^1$.  Then, $\{ 1, s_1\}$ restrict to a basis for 
$H^*(S^1; \bk)$.  Hence, the Leray-Hirsch theorem applied to the fibration 
$S^1 \hookrightarrow K \overset{p}\to K/S^1$ yields that $H^*(K; \bk)$ is 
a free $H^*(K/S^1; \bk)$-module on $\{ 1, s_1\}$, where the module 
structure is via $p^*$ and $p^*$ is injective.  
Also by (\ref{Eqn4.4}), $H^*(K; \bk)$ is a free $\gL^* \bk <s_2, \dots , 
s_k>$-module on $\{ 1, s_1\}$.  
Thus,
$$ H^*(K; \bk)/ (s_1\smile H^*(K; \bk)) \,\, \simeq  \,\, \gL^* \bk <s_2, 
\dots , s_k> \, ;$$
and also by the Leray-Hirsch theorem, 
$$ H^*(K; \bk)/ (s_1\smile H^*(K; \bk)) \,\, \simeq  \,\, p^*H^*(K/S^1; 
\bk)\cdot \{1\} \, .$$
Thus, if we compose $p^*$ with projection onto the quotient by cup 
product with $s_1$.  We obtain an isomorphism 
$$  H^*(K/S^1; \bk) \,\, \simeq  \,\, \gL^* \bk <s_2, \dots , s_k> \, . $$
\end{proof}
\par 
Now in the case of quivers for a graph $\gG$ and quiver representation 
space $V$ with dimension vector $\bd$, we have the subgroup $\C^* 
\hookrightarrow \tilde G = \prod \GL(V_i)$.  The maximal compact 
subgroup of $\GL(V_i) \simeq \GL_{d_i}(\C)$ is the unitary group  
$U_{d_i}$, which has cohomology
$$ H^*(U_{d_i}; \bk) \,\, \simeq  \,\, \gL^* \bk \langle s^{(i)}_1, \dots , 
s^{(i)}_{d_i}\rangle \, .$$
with $s^{(i)}_j$ of degree $2j -1$.  Then, $\tilde K = \prod_{v_i \in v(\gG)} 
\in  U_{d_i}$ is the maximal compact subgroup of $\tilde G$ and its 
cohomology is given by  
\begin{equation}
\label{Eqn4.6}
  H^*(\tilde K; \bk)  \,\, \simeq \,\, \otimes_{v_i \in v(\gG)} \gL^* \bk 
\langle s^{(i)}_1, \dots , s^{(i)}_{d_i}\rangle  
\end{equation}
which is again an exterior algebra.  We denote the RHS of (\ref{Eqn4.6}) by 
$\gL^*(\gG, \bd)$.  We let $K = \tilde K/S^1$ which is the maximal 
compact subgroup of $G$.  \par
To compute the cohomology of $K$ we use Lemma \ref{Lem4.3}. 
First, the degree $1$ generator $s^{(i)}_1$ arises at the pull-back of the 
generator of $H^1(S^1; \bk)$ via $\det : U_{d_i} \to S^1$.  The composition 
of $S^1 \hookrightarrow U_{d_i}$ with the determinant map sends $z \to 
z^{d_i}$, which is a covering map but still induces an isomorphism on 
cohomology with coefficients in $\bk$.  Thus, the pull-back of $s^{(i)}_1$ 
to $H^1(S^1; \bk)$ via the inclusion is a generator.  \par 
Hence, the  pull-back of any degree $1$ generator $\sum_{v_i \in v(\gG)} 
a_i s^{(i)}_1$ not belonging to a certain codimension one subspace of  
$H^*(U_{d_i}; \bk)$ will generate $H^1(S^1; \bk)$.  We choose such a 
generator $s_1$, which can be chosen to be one of the generators of the 
exterior algebra.  Then, we have by Lemma \ref{Lem4.3}
\begin{equation}
\label{Eqn4.7}
 H^*(K; \bk)  \,\, \simeq \,\, \gL^*(\gG, \bd)/ (s_1 \cdot \gL^*(\gG, \bd)) 
\end{equation}
which is an exterior algebra obtained by removing a generator of degree 
$1$ from (\ref{Eqn4.6}).  \par
Second, by the discussion for (\ref{CD1.4}) in \S \ref{S:sec1} and Lemma 
\ref{Lem1b.4} in \S \ref{S:sec2}, there is a nonzero class $s_2 \in H^1(K; 
\bk)$ obtained as the pull-back of the generator of $H^1(S^1; \bk)$ via the 
composition of the map $\chi^{\prime} : K \to S^1$ with the projection 
$\tilde K \to \tilde K/S^1 = K$.  This will be different from $s_1$.  Then, 
we have the following structure theorem for the cohomology of the Milnor 
fiber and link of the quiver discriminant.
\begin{Thm}
\label{Thm4.8}
For the quiver representation space $V$ for a quiver $\gG$ of finite type 
with dimension vector $\bd$, having an open orbit of indecomposable 
quiver representations, let $F_{(\gG, \bd)}$ denote the Milnor fiber of the 
discriminant $\cD_{(\gG, \bd)}$, and $L(\cD_{(\gG, \bd)})$ the link.  Then,
\begin{equation}
\label{Eqn4.8a}
 H^*(F_{(\gG, \bd)}; \bk)  \,\, \simeq \,\, \gL^*(\gG, \bd)/ (s_1, s_2) \cdot 
\gL^*(\gG, \bd) 
\end{equation}
which is the exterior algebra on the generators of (\ref{Eqn4.6}) but with 
two degree $1$ generators removed.  Also, 
\begin{equation}
\label{Eqn4.8b} 
\widetilde{H}^*(L(\cD_{(\gG, \bd)}); \bk)  \,\, \simeq \,\, 
\widetilde{\gL^*(\gG, \bd)}/ (s_1 \cdot \widetilde{\gL^*(\gG, \bd)})
[\dim_{\C}V - 2] 
\end{equation}
which is the exterior algebra on the generators of (\ref{Eqn4.6}) with one 
degree $1$ generator removed, then truncated in the top degree, and then 
shifted by degree $\dim_{\C}V - 2$.
\end{Thm}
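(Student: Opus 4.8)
The plan is to read Theorem~\ref{Thm4.8} as a direct specialization of the general equidimensional results, Theorems~\ref{Thm4.2} and~\ref{Thm3.6}, to the reductive group $G = \tilde G/\C^*$, combined with the explicit computation of $H^*(K;\bk)$ already recorded in (\ref{Eqn4.7}). By Gabriel's classification together with Buchweitz--Mond, for a quiver $\gG$ of finite type with the prescribed dimension vector $\bd$ the representation $\rho : G \to \GL(V)$ is equidimensional with open orbit and $\cD_{(\gG,\bd)}$ is a linear free divisor; moreover $G$, being the quotient of the product $\tilde G = \prod_i \GL(V_i)$ by a central $\C^*$, is connected and reductive. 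Hence every hypothesis of Theorems~\ref{Thm4.2} and~\ref{Thm3.6} is satisfied, and I may feed in the maximal compact subgroup $K = \tilde K/S^1$, whose cohomology by (\ref{Eqn4.7}) is the exterior algebra $\gL^*(\gG,\bd)$ of (\ref{Eqn4.6}) with one degree-one generator $s_1$ deleted.

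For the Milnor fiber I would invoke Theorem~\ref{Thm4.2}: the global Milnor fibration is cohomologically trivial, and $H^*(F_{(\gG,\bd)};\bk)$ is obtained from $H^*(K;\bk)$ by killing the distinguished degree-one class coming from the Wang sequence. In the quiver notation this distinguished class is the element $s_2 \in H^1(K;\bk)$ produced just before the theorem as the pull-back of the generator of $H^1(S^1;\bk)$ under $\chi^{\prime} : K \to S^1$. Since $H^*(K;\bk)$ is an exterior algebra and $s_2$ is a nonzero degree-one element distinct from (the already-deleted) $s_1$, the quotient by $s_2 \smile H^*(K;\bk)$ is again an exterior algebra, now on the generators of (\ref{Eqn4.6}) with both $s_1$ and $s_2$ removed. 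This is precisely the right-hand side of (\ref{Eqn4.8a}).

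For the link I would apply Theorem~\ref{Thm3.6}, which presents $\widetilde{H}^*(L(\cD_{(\gG,\bd)});\bk)$ as the truncation of $H^*(K;\bk)$ shifted upward by $2N - 2 - \dim_{\R} K$. Here reductivity is decisive: for a connected reductive complex group $\dim_{\R} K = \dim_{\C} G$, and equidimensionality gives $\dim_{\C} G = N = \dim_{\C} V$ with finite isotropy, so the shift collapses to $N - 2 = \dim_{\C} V - 2$. Substituting $H^*(K;\bk) \simeq \gL^*(\gG,\bd)/(s_1 \cdot \gL^*(\gG,\bd))$ from (\ref{Eqn4.7}) then yields (\ref{Eqn4.8b}).

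The one point requiring care --- and the step I expect to be the main obstacle --- is justifying that $s_1$ and $s_2$ are genuinely independent degree-one generators of $\gL^*(\gG,\bd)$, so that the iterated quotient in (\ref{Eqn4.8a}) is well defined and yields an honest exterior algebra on the surviving generators. The class $s_1$ detects the embedded central $\C^* \hookrightarrow \tilde G$ via the determinants $\det : U_{d_i} \to S^1$, whereas $s_2$ detects the quotient fibration $\chi^{\prime} : K \to S^1$; these arise from different circle homomorphisms. I would confirm independence using the Leray--Hirsch structure of Lemma~\ref{Lem4.3}: the projection $\tilde K \to K$ induces an injection whose image is complementary to $\bk\langle s_1\rangle$ in $H^1(\tilde K;\bk)$, so the nonzero pull-back of $s_2$ cannot be proportional to $s_1$. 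Once this independence is in hand, both displayed isomorphisms follow formally from the assembled theorems.
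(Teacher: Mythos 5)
Your proposal is correct and takes essentially the same route as the paper, whose entire proof of Theorem~\ref{Thm4.8} is to cite the preceding discussion (the construction of $s_1$ and $s_2$ and the computation (\ref{Eqn4.7})), Lemma~\ref{Lem4.3}, and Theorems~\ref{Thm4.2} and~\ref{Thm3.6}; you assemble exactly these ingredients, including the identification of the distinguished Wang class with $s_2$ and the reductive/equidimensional computation collapsing the shift $2N-2-\dim_{\R}K$ to $\dim_{\C}V - 2$. Your Leray--Hirsch argument that $p^*H^1(K;\bk)$ is complementary to $\bk\langle s_1\rangle$ in $H^1(\tilde K;\bk)$, hence that $s_1$ and $s_2$ are independent degree-one generators, is a correct fleshing-out of the point the paper asserts without proof (\lq\lq This will be different from $s_1$\rq\rq).
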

\begin{Remark}
\label{Rem4.10}
For an odd integer $k > 1$, the number of generators in degree $k$ of the 
exterior algebras in (\ref{Eqn4.8a}) and (\ref{Eqn4.8b}) is given by $| \{ 
v_j \in v(\gG) : d_j \geq \frac{k+1}{2}\} |$.  While the number of degree 
$1$ generators is either $2$, resp. $1$, less than $| v(\gG) |$.
\end{Remark}
\begin{proof}
This follows from from the preceding discussion, Lemma \ref{Lem4.3}, 
together with Theorems~\ref{Thm4.2} and \ref{Thm3.6}.
\end{proof}
\par
\begin{Example}
\label{Ex4.11}
We consider the quiver representation corresponding to $D_4$ which has 
$4$ vertices with a central one $v_1$ connected to the other three by 
edges, with the direction toward the central vertex.  The dimension vector 
$\bd$ has dimensions $\dim_{\C} V_1= 2$, and for the other vertices 
$\dim_{\C} V_i= 1$.  By Mond and Buchweitz, we can view the linear 
transformations as determined by vectors $(x_i, y_i) \in \C^2$, for $i = 1, 
2, 3$.  The quiver discriminant consists of the triple of vectors for which 
at least two of them lie in a common line.  This is defined by the equation 
$(x_1y_2 - x_2y_1)(x_1y_3 - x_3y_1)(x_2y_3 - x_3y_2) = 0$ in $\C^6$.  
The group $G = \left( \GL(\C^2) \times (\C^*)^3\right)/ \C^*$ has maximal 
compact subgroup $K = (U_2 \times T^3)/S^1$.  Then, by Theorem 
\ref{Thm4.8} we have the cohomology of the Milnor fiber 
\begin{equation}
\label{Eqn4.10a}
 H^*(F_{(D_4, \bd)}; \bk)  \,\, \simeq \,\, \gL^*\bk \langle s_1, s_2, 
s_3\rangle 
\end{equation}   
and 
\begin{equation}
\label{Eqn4.10b}
 \widetilde{H}^*(L(\cD_{(D_4, \bd)}); \bk)  \,\, \simeq \,\, 
\widetilde{\gL^*\bk} \langle s_1, s_2, s_3, s_4\rangle[4] 
\end{equation}
where we let $s_1$ have degree $3$ and the other $s_i$ have degree $1$. 
Thus, $H^j(F_{D_4}; \bk)$ is nonzero in degrees $0 \leq j \leq 5$ and has  
dimensions $1, 2, 1, 1, 2, 1$; while $\widetilde{H}^j(L(\cD_{(D_4, \bd)}); 
\bk)$ is nonzero in degrees $4 \leq j \leq 9$ and has dimensions $1, 3, 3, 
2, 3, 3$.  The top degree $9$ of dimension $3$ corresponds to the $3$ 
irreducible components of $\cD_{(D_4, \bd)}$.  
\end{Example}
\begin{Remark}
\label{Rem5.5}
Mond and Buchweitz have examined the effect of reversing various quiver 
arrows in a quiver representation of finite type.  David Mond has indicated 
they have found that for certain quiver representations, the discriminant 
changes when the directions of certain quiver arrows are changed.  
However, it follows from the results obtained here that the rational 
cohomology of the Milnor fiber,  the complement, and the cohomology of 
the link (as a graded vector space) will not change.  A natural question is 
which topological invariants will detect this change.  
\end{Remark}
\section{Topology of Formal Sums of Exceptional Orbit Hypersurfaces}  
\label{S:sec6}
\par
We conclude by combining the preceding results with a result of Mutsuo 
Oka \cite{Ok}, together with the results from Siersma et al to exhibit a 
large collection of highly nonisolated hypersurface singularities whose 
Milnor fibers are either joins of compact manifolds or bouquets of 
suspensions of such a join, and whose topology we can explicitly compute. 
\par 

\subsection*{Topology of Formal Sums of Exceptional Orbit 
Hypersurfaces} 
We use the notion of a {\em formal linear combination of hypersurface 
singularities}.  We consider $f_i : \C^{n_i}, 0 \to \C, 0$ for $i = 1, \dots , 
r$, defining hypersurfaces $X_i, 0 \subset \C^{n_i}, 0$.  We regard the 
$\C^{n_i}$ as distinct spaces and let $\pi_i : \prod_{i = 1}^{r} \C^{n_i} \to 
\C^{n_i}$ denote projection on the $i$-th factor.  Then, for $a_i \in \C^*$ 
we let $f = \sum_{i = 1}^{r} a_i f_i\circ \pi_i$.  Then, $f$ defines a 
hypersurface in $\prod_{i = 1}^{r} \C^{n_i}$ which we will refer to as the 
{\em formal linear combination of the hypersurfaces defined by the $f_i$}. 
We will denote it by $f = \oplus_{i = 1}^{r} a_i f_i$.  
\par  
We are interested in the special case where the $f_i$ define exceptional 
orbit hypersurfaces $\cE_i$ of the types we considered earlier.   Then, we 
will first combine the earlier results we obtained for them with the 
following result of Mutsuo Oka \cite[Thm 1]{Ok} which considerably 
extends a classical result of Thom-S\'ebastiani.  \par
\begin{Thm}[Oka]
\label{Thm6.1}
  Let $g: \C^n, 0 \to \C, 0$ and $h: \C^m, 0 \to \C, 0$ be weighted 
homogeneous germs with Milnor fibers $Y$, resp. $Z$, then $f = g \oplus h : 
\C^{n+m}, 0 \to \C, 0$ has Milnor fiber $X$ homotopy equivalent to the join 
$Y * Z$.  Moreover, the monodromy of $f$ is the join of the monodromies of 
$g$ and $h$.
\end{Thm}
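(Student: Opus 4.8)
The plan is to realize the Milnor fiber of $f = g \oplus h$ as an affine variety carrying two commuting weighted $\C^*$-actions and to read off its join structure geometrically. First I would absorb the coefficients: since $a_i f_i(x) = f_i(\lambda_i \cdot x)$ for any $\lambda_i$ with $\lambda_i^{d_i} = a_i$ (using weighted homogeneity with positive weights), a weighted coordinate change reduces us to $f(x,y) = g(x) + h(y)$ on $\C^n \times \C^m$, which is again weighted homogeneous once the $x$- and $y$-weights are rescaled to a common degree $d$. By the weighted-homogeneous analogue of Lemma~\ref{Lem1.1}, the Milnor fibers of $g$, $h$, $f$ may be taken to be the global affine fibers $Y = g^{-1}(1)$, $Z = h^{-1}(1)$, $X = f^{-1}(1)$, and the geometric monodromies are the weighted multiplications by $\zeta_g = e^{2\pi i/d_g}$ and $\zeta_h = e^{2\pi i/d_h}$; after the common rescaling these are exactly the restriction to $X$ of the ambient monodromy $e^{2\pi i/d}$ of $f$, acting diagonally as $(\mu_g,\mu_h)$.

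Next I would study the map $\Psi = g|_X : X \to \C$, whose fiber over $u$ is $g^{-1}(u)\times h^{-1}(1-u)$. Over $\C \setminus \{0,1\}$ both factors are honest Milnor fibers, so $\Psi$ is a smooth fiber bundle with fiber $Y \times Z$. I would cover $\C$ by the two convex half-planes $A = \{\Re u < 2/3\}$ and $B = \{\Re u > 1/3\}$, with $A \ni 0$, $B \ni 1$, and $A\cap B$ a strip missing $0$ and $1$. The core step is an explicit deformation retraction of $\Psi^{-1}(A)$ onto $\{0\}\times Z \cong Z$: as $s$ runs from $0$ to $1$ send $u \mapsto (1-s)u$, scale the $x$-coordinate by the real weighted factor $(1-s)^{1/d_g}$ (so $g$ follows $u$ down to $0$ and $x \to 0$), and scale the $y$-coordinate by a weighted factor tracking $1-u \rightsquigarrow 1$. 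Symmetrically $\Psi^{-1}(B) \simeq Y$, while over the contractible strip $\Psi^{-1}(A\cap B) \simeq Y \times Z$, and the two inclusions are homotopic to the projections $\pi_Z$ and $\pi_Y$.

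I would then apply the gluing (homotopy pushout) lemma to the open cover $X = \Psi^{-1}(A)\cup\Psi^{-1}(B)$ to conclude that $X$ is homotopy equivalent to the double mapping cylinder of $Z \xleftarrow{\pi_Z} Y \times Z \xrightarrow{\pi_Y} Y$, which is by definition the join $Y * Z$. Concretely this equivalence is realized by the map $J([y,z,\theta]) = \bigl((\cos^2\theta)^{1/d_g}\cdot y,\ (\sin^2\theta)^{1/d_h}\cdot z\bigr)$ out of $Y * Z = (Y\times Z\times[0,\tfrac\pi2])/{\sim}$, which lands in $X$ because $g+h = \cos^2\theta + \sin^2\theta = 1$ and collapses the correct factor at each end. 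For the monodromy, I would observe that the ambient monodromy of $f$ commutes with the real weighted scalings appearing in $J$ and preserves $Y$ and $Z$, so $J$ intertwines it with the join monodromy $\mu_g * \mu_h$, giving the second assertion.

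The main obstacle I anticipate is entirely in the second paragraph: making the retraction onto the degenerate fibers over $0$ and $1$ genuinely continuous, since $\Psi$ fails to be a fibration exactly there. The weighted $\R_{+}$- and $\C^*$-actions furnish the retraction explicitly, but one must check joint continuity of the chosen roots as $u \to 0$ (resp.\ $u \to 1$) and that the collapse of the nearby fibers $Y\times Z$ onto $Z$ (resp.\ $Y$) is compatible with the projections. The point that makes all the branch choices single-valued is that on $A$ the value $1-u$ stays inside the fixed simply connected half-plane $\{\Re > 1/3\}\subset\C^*$ throughout the homotopy (and symmetrically $u$ on $B$), so a continuous weighted root exists; this is what closes the argument at the special fibers.
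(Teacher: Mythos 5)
The paper offers no proof of this statement to compare against: Theorem~\ref{Thm6.1} is quoted verbatim from Oka \cite{Ok} (Theorem 1 there), so your argument must be judged on its own, and on its own it is essentially correct --- indeed you have largely reconstructed Oka's original proof with a modern packaging. Your join map $J([y,z,\theta]) = \bigl((\cos^2\theta)^{1/d_g}\cdot y,\ (\sin^2\theta)^{1/d_h}\cdot z\bigr)$, with roots acting through the weighted $\C^*$-action, is exactly Oka's embedding of $Y*Z$ into $X = f^{-1}(1)$; where you differ is that Oka shows directly that the image of $J$ is a strong deformation retract of $X$ by weighted scalings, while you assemble the equivalence from the open cover $A = \{\operatorname{Re} u < 2/3\}$, $B = \{\operatorname{Re} u > 1/3\}$ of the base of $\Psi = g|_X$ and a homotopy pushout, identifying the double mapping cylinder of $Z \leftarrow Y\times Z \rightarrow Y$ with the join. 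Your treatment of the delicate point is right: $X$ is smooth and $\Psi$ is a submersion away from $\{0,1\}$ by Euler's relation (at a critical point of $f$ one has $g(x)=h(y)=0$, so $f=0\neq 1$), and the retractions of $\Psi^{-1}(A)$ onto $\{0\}\times Z$ are jointly continuous precisely because $1-(1-s)u$ stays in the simply connected set $\{\operatorname{Re} > 1/3\}\subset\C^*$, where the weighted $d_h$-th root lifts. Two residual checks remain implicit but are routine: (a) the inclusions of $\Psi^{-1}(A\cap B)\simeq Y\times Z$ into the two retracted pieces must be identified with $\pi_Z$, $\pi_Y$, which requires matching the branch choices in your trivialization with those in the retractions (harmless, since all scaling factors live in simply connected subsets of $\C^*$ and so can be deformed to $1$); and (b) the assertion that $J$ itself realizes the pushout equivalence needs the short verification that $g\circ J = \cos^2\theta$ is compatible with the cover --- alternatively you could bypass the pushout entirely and run Oka's direct retraction of $X$ onto the image of $J$ using the same scaling formulas, which would also make the monodromy claim immediate, since your intertwining $\sigma\circ J = J\circ(\mu_g * \mu_h)$ is strict (the real weighted scalings and the weighted root-of-unity scalings commute because $\C^*$ is abelian). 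With those two points filled in, this is a complete and correct proof of the theorem the paper imports from \cite{Ok}.
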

To use this result we recall the consequences for cohomology with 
coefficients in a field $\bk$ of characteristic $0$ (see e.g. \cite{Ok} or 
\cite[Chap. 5]{CF}). 
\begin{equation}
\label{eqn6.2}
\tilde{H}^{\ell}(Y*Z; \bk) \,\, \simeq \,\, \oplus_{i = 0}^{\ell - 1}\,\,  
\left( \tilde{H}^{i}(Y; \bk) \otimes \tilde{H}^{\ell-i -1}(Z; \bk)\right)
\end{equation}
Hence, we have the following isomorphism of cohomology viewed as 
graded vector spaces 
\begin{equation}
\label{eqn6.3}
\tilde{H}^{*}(Y*Z; \bk) \,\, \simeq \,\,  (\tilde{H}^{*}(Y; \bk) \otimes 
\tilde{H}^{*}(Z; \bk))[1]
\end{equation}
where as earlier \lq\lq$[1]$\rq\rq will denote shift increasing degrees by 
$1$.  
Then, if the monodromy on $H^*(Y; \bk)$ is denoted by $\gs_g$ and that on 
$H^*(Z; \bk)$, by $\gs_h$, then the join of the monodromies, denoted 
$\gs_g * \gs_h$, which by Theorem \ref{Thm6.1} gives the monodromy 
$\gs_f$, is given by the tensor product $\gs_g \otimes \gs_h$ on each 
summand.    \par
We may combine Oka\rq s theorem with our earlier results.  For $i = 1, 
\dots , r$, let $f_i: \C^{n_i}, 0 \to \C, 0$ define an exceptional orbit 
hypersurface $\cE_i, 0$ with global Milnor fiber $F_i$ with model 
compact submanifold $M_i$, global Milnor fibration $p_i : E_i \to S^1$, and 
monodromy $\gs_i$ on $\tilde{H}^*(F_i; \bk)$.  \par
\begin{Thm}
\label{Thm6.4}
Let $f : \C^n, 0 \to \C, 0$ be given by the formal linear combination  
$\oplus_{i = 1}^{r} a_i f_i$ with $a_i \in \C^*$, and $f_i$ as above.  It 
defines a hypersurface $\cE$, with global Milnor fiber $F$, global Milnor 
fibration $p : E \to S^1$, and monodromy $\gs$.  Then 
\begin{itemize}
\item[i)] $F$ is homotopy equivalent to the join of compact manifolds 
$M_1 * M_2 * \cdots * M_r$;
\item[ii)] hence, 
\begin{equation}
\label{eqn6.4b}
\tilde{H}^*(F; \bk) \,\, \simeq \,\, \left( \otimes_{i = 1}^{r} \,\, 
\tilde{H}^*(M_i; \bk) \right)[r-1]. 
\end{equation}
\item[iii)] Also, the monodromy $\gs = \gs_1 * \gs_2 * \cdots * \gs_r$; 
so \item[iv)] if the Milnor fibration of each $f_i$ is cohomologically 
trivial, then the Milnor fibration of $f$ is cohomologically trivial.
\item[v)]  In the case of iv), 
\begin{equation}
\label{eqn6.5}
H^{*}(\C^n \backslash \cE; \bk) \,\, \simeq \,\,  H^{*}(E; \bk) \,\, \simeq 
\,\,  \gL^*\bk\{s_1\} \otimes \left(\bk\{ 1\} \oplus (\otimes_{i = 1}^{r} 
\,\, \tilde{H}^*(M_i; \bk)) [r-1]\right)
\end{equation}
where $\bk\{ 1\}$ has degree $0$ and $\deg (s_1) = 1$. 
\end{itemize}
\end{Thm}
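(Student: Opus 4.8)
The plan is to reduce everything to Oka's Theorem~\ref{Thm6.1} by first observing that each $f_i$, defining a homogeneous exceptional orbit hypersurface, is in particular weighted homogeneous, so that $f = \oplus_{i=1}^r a_i f_i$ is a weighted homogeneous germ to which Theorem~\ref{Thm6.1} applies inductively (using associativity of the join up to homeomorphism). For (i), Oka's theorem identifies the local Milnor fiber of $f$ with the iterated join of the local Milnor fibers of the $f_i$. By Lemma~\ref{Lem1.1} the local and global Milnor fibers of each $f_i$ are diffeomorphic, so each is $\simeq F_i$, and by Lemma~\ref{Lem1b.3} the model compact submanifold $M_i$ is a deformation retract of $F_i$, whence $F_i \simeq M_i$. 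Since the join is a homotopy functor on CW complexes (a homotopy equivalence of factors induces one of joins), this gives $F_1 * \cdots * F_r \simeq M_1 * \cdots * M_r$; combined with the identification in Lemma~\ref{Lem1.1} applied to $f$ itself, this yields (i). Statement (iii) is the second conclusion of Theorem~\ref{Thm6.1}, again applied inductively: the monodromy of the join is the join of the monodromies.

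For (ii), I would apply the join cohomology formula (\ref{eqn6.3}) repeatedly to $M_1 * \cdots * M_r$. Each of the $r-1$ joins reduces to a tensor product of reduced cohomologies together with a single upward degree shift, so that after all $r-1$ joins the total shift is $r-1$ and the reduced cohomology is the full tensor product $\otimes_{i=1}^r \tilde{H}^*(M_i; \bk)$, which is precisely (\ref{eqn6.4b}).

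For (iv), the key point is that under the isomorphism (\ref{eqn6.3}) the join monodromy $\gs_g * \gs_h$ acts on each K\"unneth summand as the tensor product $\gs_g^* \otimes \gs_h^*$. Iterating, $\gs^*$ acts on $\tilde{H}^*(F; \bk)$ as $\gs_1^* \otimes \cdots \otimes \gs_r^*$; if each $f_i$ is cohomologically trivial, i.e. $\gs_i^* = \id$, this tensor product is the identity, so $\gs^* = \id$ and $f$ is cohomologically trivial. Granting this, (v) follows by feeding cohomological triviality into Proposition~\ref{Prop1.5}: the equivalence of its conditions (i) and (ii) gives $H^*(E; \bk) \simeq \gL^* \bk\langle s_1\rangle \otimes H^*(F; \bk)$, while the homotopy equivalence $E \simeq \C^n \backslash \cE$ gives $H^*(\C^n \backslash \cE; \bk) \simeq H^*(E; \bk)$. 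Writing $H^*(F; \bk) = \bk\{1\} \oplus \tilde{H}^*(F; \bk)$ and substituting the description of $\tilde{H}^*(F; \bk)$ from (ii) produces (\ref{eqn6.5}).

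The main obstacle I anticipate is not a single computation but the bookkeeping needed to pass between the \emph{local} Milnor fiber supplied by Oka's theorem and the \emph{global} Milnor fiber used throughout \S\S\ref{S:sec1}--\ref{S:sec5}: because the degrees $n_i$ of the $f_i$ may differ, $f$ is only weighted homogeneous, so I must check that the argument of Lemma~\ref{Lem1.1} (the $\R_+$-contraction producing a diffeomorphism of the global and local Milnor fibrations, and the homotopy equivalence $E \simeq \C^n \backslash \cE$) still goes through using the weighted $\C^*$-action attached to $f$ rather than the standard scaling. A secondary point requiring care is the naturality underlying (iv) — that the isomorphisms in Oka's join formula genuinely intertwine the monodromies as tensor products — but this is exactly the content of the monodromy statement in Theorem~\ref{Thm6.1} and so may be invoked rather than reproved.
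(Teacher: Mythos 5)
Your proposal is correct and follows essentially the same route as the paper: induction on $r$ via Oka's Theorem~\ref{Thm6.1} (noting that the $f_i$, being homogeneous in disjoint variables, make $g$ and $h$ weighted homogeneous for common weights), repeated use of the join formula (\ref{eqn6.3}), the tensor-product action of the join monodromy yielding (iv), and then Proposition~\ref{Prop1.5} together with the weighted-homogeneous analogue of Lemma~\ref{Lem1.1} for (v). The local/global bookkeeping you flag is exactly the point the paper disposes of with the remark that the argument of Lemma~\ref{Lem1.1} carries over to the weighted $\C^*$-action, so your treatment matches the published proof in all essentials.
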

\begin{proof}
For i) ii) and and iii), we use induction on the number $r$ of terms in the 
formal linear combination $f = \oplus_{i = 1}^{r} a_i f_i$. If the result is 
true when $m < r$ then $f = g \oplus h$ where $g = \oplus_{i = 1}^{r-1} a_i 
f_i$ and $h = a_r f_r$.  As each $f_i$ is homogeneous on different 
coordinates, both $g$ and $h$ are weighted homogeneous for a common set 
of weights on $\C^n$, so Oka\rq s theorem applies and the Milnor fiber $X = 
F$ of $f$ is homotopy equivalent to $Y*Z$ with $Y$ and $Z$ denoting the 
global Milnor fibers of $g$, resp. $h$ (and the Milnor fiber of $h$ is 
diffeomorphic to that of $f_r$, i.e. $F_r$).  Also, the monodromy $\gs_f = 
\gs_g * \gs_h$.  By the inductive assumption $Y$ is homotopy equivalent 
to $F_1 * F_2 * \cdots * F_{r-1}$, with monodromy $\gs_g = \gs_1 * 
\cdots * \gs_{r-1}$. so by Oka\rq s theorem, $X$ is homotopy equivalent 
to $(F_1 * F_2 * \cdots * F_{r-1}) * F_r$, with monodromy $\gs_f = \gs_1 
* \cdots * \gs_{r-1} * \gs_{r}$.  Hence, by repeated application of 
(\ref{eqn6.3}) we see that $\tilde{H}^*(F; \bk)$ has the indicated form as a 
graded vector space.  \par
Furthermore, for iv) if each global Milnor fibration $p_i : E_i \to S^1$ is 
cohomologically trivial, then $\gs_i \equiv id$ so by iii) $\gs_f \equiv id$ 
and the Milnor fiber of $f$ is cohomologically trivial.  Thus, by Proposition 
\ref{Prop1.5}  $H^{*}(E; \bk)$ is given by (\ref{eqn6.5}).  As $f$ is 
weighted homogeneous, by an analogous argument as in Lemma 
\ref{Lem1.1} it follows that $\C^n \backslash \cE$ has $E$ as a 
deformation retract; thus, the remainder of (\ref{eqn6.5}) follows.  
\end{proof}
\par
We next separately give the form of the (co)homology of the link $L(\cE)$. 
To do so we will introduce some notation. Given a (finite dimensional) 
graded vector space $W = \oplus W_j$ over the field $\bk$, we let 
$\bk[m]$ denote $\bk$ with a grading of degree $m$; and apply a grading to 
$\hom(W, \bk[m]) \simeq \oplus \hom(W_j, \bk[m])$ so that if $W_j$ has 
graded degree $\ell_j$, then $\hom(W_j, \bk[m])$ has graded degree $m - 
\ell_j$.  
\begin{Corollary}
\label{Cor6.5}
If $f$ is given as a formal sum as above and the Milnor fiber of each $f_i$ 
is cohomologically trivial, then 
\begin{multline}
\label{eqn6.6}
\widetilde{H}^{*}(L(\cE); \bk) \,\, \simeq \,\,  \bk[2n-3] \oplus 
\hom(\otimes_{i = 1}^{r} \,\, \tilde{H}^*(M_i; \bk), \bk[2n-2]) [-(r-1)] \\ 
 \, \oplus \, \hom(\otimes_{i = 1}^{r} \,\, \tilde{H}^*(M_i; \bk), \bk[2n-2]) 
[-r] \, .
\end{multline}
\end{Corollary}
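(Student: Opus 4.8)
The plan is to mimic the proof of Proposition~\ref{Prop1.8}(iii) (equivalently Theorem~\ref{Thm3.6}), feeding in the complement cohomology already computed in Theorem~\ref{Thm6.4}(v) and carrying the resulting graded vector space through Alexander duality. Throughout I write $n = \dim_\C \prod_{i=1}^r \C^{n_i}$, so that $n$ plays the role of $N$, and I abbreviate $W = \otimes_{i=1}^r \tilde{H}^*(M_i; \bk)$.

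First I would record the reduced cohomology of the complement. Since each $f_i$ has cohomologically trivial Milnor fiber, Theorem~\ref{Thm6.4}(v) gives $H^*(\C^n\backslash\cE;\bk)\simeq \gL^*\bk\{s_1\}\otimes(\bk\{1\}\oplus W[r-1])$; expanding $\gL^*\bk\{s_1\}=\bk\{1\}\oplus\bk\{s_1\}$ (degrees $0$ and $1$) and discarding the degree-$0$ summand yields
$$\widetilde{H}^*(\C^n\backslash\cE;\bk)\,\,\simeq\,\, \bk[1]\,\oplus\, W[r-1]\,\oplus\, W[r].$$

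Second, since $f$ is weighted homogeneous, the weighted $\R_+$-action contracts $\C^n\backslash\{0\}$ onto $S^{2n-1}$, so that $\cE\backslash\{0\}$ retracts to $L(\cE)$ and $\C^n\backslash\cE$ is homotopy equivalent to $S^{2n-1}\backslash L(\cE)$, exactly as in Proposition~\ref{Prop1.8}(iii). Applying Alexander duality (\ref{Eqn1.11}) with $N=n$ gives $\widetilde{H}^j(L(\cE);\bk)\simeq \widetilde{H}_{2n-2-j}(\C^n\backslash\cE;\bk)$, and over the field $\bk$ the universal coefficient theorem identifies the right-hand side with $\hom(\widetilde{H}^{2n-2-j}(\C^n\backslash\cE;\bk),\bk)$. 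Thus a complement class concentrated in degree $\ell$ dualizes to a link class in degree $2n-2-\ell$, which is precisely the effect of $\hom(-,\bk[2n-2])$.

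The remaining step is the degree bookkeeping, term by term: $\bk[1]\mapsto \bk[2n-3]$, then $W[r-1]\mapsto \hom(W,\bk[2n-2])[-(r-1)]$, and $W[r]\mapsto \hom(W,\bk[2n-2])[-r]$, which assembled give exactly (\ref{eqn6.6}). I do not expect a genuine obstacle here, since the argument is just Alexander duality applied to an already-computed cohomology; the only real care lies in the reduced/unreduced bookkeeping at the extreme degrees. The suppressed degree-$0$ class of the complement corresponds to the absence of a top class in the link, while the degree-$1$ class $s_1$ is the one that dualizes to the top (fundamental) class $\bk[2n-3]$. Matching these shifts correctly, rather than any deep difficulty, is the crux of the verification.
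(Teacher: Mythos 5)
Your proposal is correct and matches the paper's own proof essentially step for step: it uses Theorem~\ref{Thm6.4}(v) for the complement, the weighted $\R_+$-contraction identifying $\C^n\backslash\cE$ with $S^{2n-1}\backslash L(\cE)$, Alexander duality combined with field-coefficient duality packaged as $\hom(-,\bk[2n-2])$, and the same shift bookkeeping on the three summands $\bk[1]$, $W[r-1]$, and $W[r]$. The only cosmetic difference is that you pass to reduced complement cohomology before dualizing, whereas the paper dualizes first and then handles the degree-zero and $\bk\{s_1\}$ pieces afterwards; the content is identical.
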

\begin{proof}
By v) of Theorem \ref{Thm6.4},
$H^{*}(\C^n \backslash \cE; \bk)$ is given by (\ref{eqn6.5}).  Then, by the 
same argument given in Proposition  \ref{Prop1.8}, $S^{2n-1} \backslash 
L(\cE)$ is homotopy equivalent to $\C^n \backslash \cE$.  Hence, we may 
again apply Alexander duality as (\ref{Eqn1.10}) in the proof of 
Proposition \ref{Prop1.8} to obtain the isomorphism of graded vector 
spaces
\begin{equation}
\label{eqn6.7}
\widetilde{H}^{2n -2-j}(L(\cE); \bk) \,\, \simeq 
\hom(\widetilde{H}_j(S^{2n-1} \backslash L(\cE); \bk), \bk[2n-2])  \, .
\end{equation}
Summing (\ref{eqn6.7}) over $j$ yields an  isomorphism of graded vector 
spaces
\begin{equation}
\label{eqn6.8}
\widetilde{H}^*(L(\cE); \bk) \,\, \simeq \hom(\widetilde{H}_*(S^{2n-1} 
\backslash L(\cE); \bk), \bk[2n-2])
\end{equation}
By (\ref{eqn6.5}), we may decompose the RHS of (\ref{eqn6.8}) as the 
direct sum of graded vector spaces
\begin{multline}
\label{eqn6.10}
   \hom(\bk\{s_1\} ,\bk[2n-2]) \oplus \hom\left( (\otimes_{i = 1}^{r} \, 
\tilde{H}^*(M_i; \bk))[(r-1)], \bk[2n-2]\right)  \\ 
 \, \oplus \, \hom\left( \bk\{s_1\}\otimes (\otimes_{i = 1}^{r} \,\, 
\tilde{H}^*(M_i; \bk))[r-1], \bk[2n-2]\right) \, .
\end{multline}
As graded vector spaces, the first summand is $\bk[2n-3]$, and 
$$  \bk\{s_1\}\otimes (\otimes_{i = 1}^{r}  \,\, \tilde{H}^*(M_i; \bk))[r-
1]\,\, \simeq  \,\, (\otimes_{i = 1}^{r}  \,\, \tilde{H}^*(M_i; \bk))[r] \, .$$
Then, for the second and third summands in (\ref{eqn6.10}), the positive 
shift inside \lq\lq$\hom$\rq\rq\,  can be moved to a negative shift 
outside.  Hence, when we add $\bk\{1\}$ to both sides of (\ref{eqn6.8}), 
we obtain from (\ref{eqn6.7}) and (\ref{eqn6.8}) the isomorphism of 
graded vector spaces (\ref{eqn6.6}).  
\end{proof}
Unfortunately $H^{*}(L(\cE); \bk)$ in this more general case cannot be 
given in such a simple form as a shifted and upper truncated cohomology 
of a compact orientable manifold or an exterior algebra as in iii) of 
Proposition \ref{Prop1.8} and Theorems \ref{Thm2.2} and \ref{Thm3.6}.  
However, we can still view the cohomology as a direct sum of the shifted 
upper truncated exterior algebras which appear in the theorems.  \par
\begin{Example}
As a simple example we consider the formal sum
$$  f \,\, = \,\, a_1 \det \begin{pmatrix} x_1 & x_2 & x_3 \\ x_4 & x_5 & 
x_6 \\  x_7 & x_8 & x_9\end{pmatrix}  + a_2 y_1 y_2  
\det\begin{pmatrix} y_1 & y_2 \\ y_3 & y_4 \end{pmatrix}  $$
which defines a hypersurface singularity in $\C^{13}$.  We denote the first 
term by $f_1(x)$ and the second by $f_2(y)$.  Then, by Theorem 
\ref{Thm2.1}, the Milnor fiber of $f_1$ is homotopy equivalent to  $SU_3$ 
and has cohomology isomorphic to an exterior algebra $\gL^*\bk(e_3, 
e_5)$.  Also, by \cite{DP2} (see also Example \ref{Ex3.9}), $f_2$ defines a 
linear free divisor resulting from the action of a solvable linear algebraic 
group and by \cite{DP} it has Milnor fiber homotopy equivalent to torus, 
with cohomology an exterior algebra $\gL^*\bk(e_1, e_1^{\prime})$.  Thus, 
by Theorem \ref{Thm6.4} the Milnor fiber of $f$ is homotopy equivalent to 
$SU_3 * T^2$ with reduced cohomology 
$\left( \widetilde{\gL}^*\bk(e_3, e_5) \otimes \widetilde{\gL}^*\bk(e_1, 
e_1^{\prime})\right)[1]$, which is $0$ in degrees $< 5$ and in degrees $ 5 
\leq \ell \leq 11$ has dimensions $2, 1, 2, 1, 0, 2, 1$.  We see from the 
dimensions that it is a direct sum of three shifted copies of 
$\widetilde{\gL}^*\bk(e_1, e_1^{\prime})$ with nonzero dimensions $2, 1$ 
or three shifted copies of $\widetilde{\gL}^*\bk(e_3, e_5)$ which has its 
dimensions $1, 0, 1, 0, 0, 1$ between degrees $3$ and $8$.
\par
Also, both Milnor fibrations are cohomologically trivial by Theorems 
\ref{Thm2.1} and \ref{Thm4.2}; thus, by iii) of Theorem \ref{Thm6.4}, the 
Milnor fibration of $f$ is cohomologically trivial.  Hence, by Corollary  
\ref{Cor6.5}, the link $L(\cE)$ has reduced cohomology which is $0$ in 
degrees $< 12$ and in degrees $12 \leq \ell \leq 23$ has dimensions $1, 3, 
2, 1, 3, 3, 3, 2, 0, 0, 0, 1$.  Here we see the group of lower nonzero 
dimensions obtained from those for the Milnor fiber written in reverse 
order $1, 2, 0, 1, 2, 1, 2$ and added to another copy shifted by $1$. 
\end{Example}

\subsection*{Milnor Fibers Homotopy Equivalent to a Bouquet of 
Suspensions of Joins of Compact Manifolds}  \hfill
\par
As a last step, we consider a formal sum $h = f \oplus g$, where $f$ is a 
formal sum of $f_i$ which define exceptional orbit hypersurfaces as 
considered earlier, and $g$ is weighted homogeneous and has a Milnor 
fiber which is homotopy equivalent to a bouquet of spheres $\vee_{i = 
1}^{k} S^{n_i}$.  Then, we may again apply Oka\rq s Theorem to conclude
\begin{Proposition}
\label{Prop6.11}
The Milnor fiber of $h = f \oplus g$ as above has the homotopy type of a 
bouquet of spaces, each of which is an iterated $n_i +1$ suspension 
$S^{n_i+1}(*_{j = 1}^{r} M_j)$ for $i = 1, \dots, k$.
\end{Proposition}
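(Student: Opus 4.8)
The plan is to apply Oka's Theorem (Theorem~\ref{Thm6.1}) one final time, now to the splitting $h = f \oplus g$, and then to simplify the resulting join by purely homotopy-theoretic means. Since each $f_i$ is homogeneous in its own block of coordinates, the formal sum $f = \oplus_{i=1}^r a_i f_i$ is weighted homogeneous for a common weight system (exactly the bookkeeping carried out in the proof of Theorem~\ref{Thm6.4}), while $g$ is weighted homogeneous on a disjoint block of coordinates; hence $h = f \oplus g$ is weighted homogeneous and Theorem~\ref{Thm6.1} applies, giving that its Milnor fiber $F_h$ is homotopy equivalent to the join $F_f * F_g$. By part (i) of Theorem~\ref{Thm6.4} we have $F_f \simeq W$, where $W = M_1 * M_2 * \cdots * M_r = *_{j=1}^r M_j$, and by hypothesis $F_g \simeq \bigvee_{i=1}^k S^{n_i}$. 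Thus $F_h \simeq W * \bigl(\bigvee_{i=1}^k S^{n_i}\bigr)$, and the whole task reduces to identifying this join.

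For the second step I would invoke the standard identities, valid for well-pointed CW complexes, relating the join, smash product, and reduced suspension: the homotopy equivalence $X * Y \simeq \Sigma(X \wedge Y)$, the distributivity of the smash product over wedges, the identity $Z \wedge S^n \simeq \Sigma^n Z$, and the fact that reduced suspension commutes with wedges. Chaining these yields
\begin{align*}
 F_h \,&\simeq\, W * \Bigl(\bigvee_{i=1}^k S^{n_i}\Bigr) \,\simeq\, \Sigma\Bigl(W \wedge \bigvee_{i=1}^k S^{n_i}\Bigr) \,\simeq\, \Sigma\Bigl(\bigvee_{i=1}^k (W \wedge S^{n_i})\Bigr) \\
 &\simeq\, \Sigma\Bigl(\bigvee_{i=1}^k \Sigma^{n_i} W\Bigr) \,\simeq\, \bigvee_{i=1}^k \Sigma^{n_i+1} W \,.
\end{align*}
Since $\Sigma^{n_i+1} W = S^{n_i+1}\bigl(*_{j=1}^r M_j\bigr)$ is precisely the $(n_i+1)$-fold iterated suspension of the join of the compact model submanifolds, this is exactly the asserted homotopy type.

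The only delicate points here are matters of care rather than genuine obstacles. First, one must confirm that Oka's hypotheses truly hold, i.e.\ that $f$ is weighted homogeneous on coordinates disjoint from those of $g$; this is immediate from the weight assignment already used for Theorem~\ref{Thm6.4}. Second, the join/smash/suspension equivalences above require that all spaces involved be CW complexes with non-degenerate base points: the models $M_j$ are smooth compact manifolds, the Milnor fiber of $g$ is a finite CW complex, and the spheres $S^{n_i}$ are well-pointed, so each equivalence is legitimate. I would emphasize that no appeal to cohomological triviality or to the monodromy is needed for the homotopy-type statement itself; those would enter only if one wished to record the induced (co)homology or the monodromy of $h$ as a corollary.
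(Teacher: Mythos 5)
Your proof is correct, but it reaches the wedge decomposition by a genuinely different route than the paper. Both arguments begin identically: apply Oka's Theorem to $h = f \oplus g$ (after checking weighted homogeneity on disjoint coordinate blocks) to get $F_h \simeq W * \bigl(\bigvee_{i=1}^k S^{n_i}\bigr)$ with $W = *_{j=1}^{r} M_j$. From there you invoke the standard smash-product calculus for well-pointed CW complexes --- $X * Y \simeq \Sigma(X \wedge Y)$, distributivity of $\wedge$ over $\vee$, $W \wedge S^{n} \simeq \Sigma^{n} W$, and commutation of $\Sigma$ with wedges --- to conclude $F_h \simeq \bigvee_{i=1}^{k} \Sigma^{n_i+1} W$. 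The paper instead argues geometrically and by hand: it writes the join as the union of the subspaces $W * S^{n_i}$, observes that any two of these meet in the cone $W * \{p\}$ on the wedge point $p$, builds explicit strong deformation retractions of $W * U_i$ onto $W * \{p\}$ from contractible neighborhoods $U_i \subset S^{n_i}$ of $p$, and collapses the resulting contractible subspace to obtain the wedge $\bigvee_i (W * S^{n_i})$, each summand being \emph{homeomorphic} to the $(n_i+1)$-fold unreduced suspension. Your approach is shorter and cleanly modular, at the cost of importing the pointed-space machinery and its hypotheses (which you correctly flag: the $M_j$ are compact manifolds, hence everything in sight is a well-pointed CW complex, and your reduced suspensions agree up to homotopy with the paper's unreduced ones since $W$ is well-pointed and path-connected); the paper's argument is more elementary and self-contained, needing nothing beyond deformation retractions and the quotient criterion for homotopy equivalence, and it yields the slightly stronger homeomorphism statement for each wedge summand. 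Neither argument needs monodromy or cohomological triviality, as you note.
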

\begin{Remark}
\label{Rem6.12}
Thus, within the class of nonisolated hypersurface singularities obtained 
from formal sums, the Milnor fibers which are a bouquet of spheres are 
replaced more generally by bouquets of spaces each of which are 
suspensions of joins of compact manifolds.  
\end{Remark}
\begin{proof}
Let $X = *_{j = 1}^{r} M_j$,which is homotopy equivalent to the Milnor 
fiber of $f$.  By Oka\rq s Theorem, the Milnor fiber of $h$ is homotopy 
equivalent to the join $X*(\vee_{i = 1}^{k} S^{n_i})$.  Let $p$ denote the 
common point of the bouquet.  Then, this join is the union of the joins $X * 
S^{n_i}$, and any two intersect in the common subspace $X * \{ p\}$.  This 
is a cone on $X$, and is hence contractible.  Moreover, let $U_i \subset  
S^{n_i}$ be a contractible neighborhood of $p$, with $\varphi_i : U_i \to \{ 
p\}$ a strong deformation retraction.  Then, the join of  $\varphi_i$ with 
the identity on $X$ gives a strong deformation retraction $\tilde 
\varphi_i$ of $X * U_i$ to $X * \{ p\}$.  Thus, together these give a strong 
deformation retraction of $\cup_{i = 1}^{k} X * U_i$ to $X * \{ p\}$.  As 
$X * \{ p\}$ has $\{ p\}$ as a strong deformation retract, we conclude that 
$\cup_{i = 1}^{k} X * U_i$ has $\{p\}$ as a strong deformation retract.  
Hence, we may collapse $\cup_{i = 1}^{k} X * U_i$ to $\{p\}$ and retain the 
same homotopy type.  It follows that $X * (\vee_{i = 1}^{k} S^{n_i})$ is 
homotopy equivalent to $\vee_{i = 1}^{k} (X *S^{n_i})$, and each $X 
*S^{n_i}$ is homeomorphic to the $n_i +1$ iterated suspension 
$S^{n_i+1}(X)$.  Together these give the result.  
\end{proof}

\end{document}